\documentclass[smallextended, nospthms]{svjour3}       
\smartqed  

\usepackage{graphicx}
\usepackage{bm}
\usepackage{amsmath}
\usepackage{color}
\usepackage{amsthm}
\allowdisplaybreaks[4]
 \usepackage{float}
\usepackage{subfloat}
\usepackage{subfig}
\usepackage{caption}
\usepackage{epstopdf}
\usepackage{diagbox}
\newcommand{\vertiii}[1]{{\left\vert\kern-0.25ex\left\vert\kern-
0.25ex\left\vert #1
		\right\vert\kern-0.25ex\right\vert\kern-
0.25ex\right\vert}}
\newtheorem{thm}{Theorem}[section]
\newtheorem{lem}[thm]{Lemma}
\newtheorem{corol}[thm]{Corollary}
\newtheorem{rem}[thm]{Remark}


%
%
%
%
\begin{document}

\title{A Stabilized Hybrid Mixed Finite Element Method for Poroelasticity
	\thanks{
	The work of C.~Niu and H.~Rui is supported by the National Natural Science Foundation of China Grant No. 11671233. The work of X.~Hu is partially supported by the National Science Foundation under grant DMS-1620063.
	}
}


\author{Chunyan Niu    \and
        Hongxing Rui    \and
        Xiaozhe Hu
}

\authorrunning{C. Niu, H. Rui, and X. Hu} 

\institute{C. Niu and H. Rui  \at
              School of Mathematics, Shandong University, Jinan, 250100, China. \\
               \email{yanzi198929@163.com,\ hxrui@sdu.edu.cn}
           \and
          X. Hu \at
              Department of Mathematics, Tufts University, 503 Boston Ave, Medford, MA 02155, USA.\\
               \email{Xiaozhe.Hu@tufts.edu}
}

\journalname{Computational Geosciences}

\date{Received: date / Accepted: date}

\maketitle

\begin{abstract}
In this work, we consider a hybrid mixed finite element method for Biot's model. The hybrid P1-RT0-P0 discretization of the displacement-pressure-Darcy's velocity system of Biot's model presented in \cite{C. Niu} is not uniformly stable with respect to
the physical parameters, resulting in some issues in numerical simulations. To alleviate such problems, following \cite{V. Girault},
we stabilize the hybrid scheme with face bubble functions and show the well-posedness with respect to physical and discretization parameters,
which provide optimal error estimates of the stabilized method. We introduce a perturbation of the bilinear form of the displacement which allows for the elimination of the bubble functions. Together with eliminating Darcy's velocity by hybridization, we obtain an eliminated system whose
size is the same as the classical P1-RT0-P0 discretization. Based on the well-posedness of the eliminated system, we design
block preconditioners that are parameter-robust. Numerical experiments are presented to confirm the theoretical results of the stabilized scheme as well as the block preconditioners.
\keywords{poroelasticity \and hybrid mixed finite element \and stabilization \and block preconditioners}
\end{abstract}

\section{Introduction}\label{intro}
Poroelasticity theory simultaneously describes the interaction between the deformation and fluid flow in a fluid-saturated porous medium.
Nowadays, poroelasticity theory is well developed, dating back to the pioneering work of Terzaghi~\cite{K. Terzaghi}, who studied a one-dimensional consolidation problem. A general three-dimensional mathematical model was established by Biot~\cite{M.A. Biot41}, who later extended it to anisotropic and nonlinear materials~\cite{M.A. Biot55,M.A. Biot3}. Results on the existence and uniqueness of the solution for these models have been investigated by Showalter in~\cite{R. Showalter} and \v{Z}en\'{\i}\v{s}ek in~\cite{A. Zenisek}. The well-posedness for nonlinear poroelastic models is considered in~\cite{R.Z. Dautov}. Lewis and Schrefler~\cite{R.W. Lewis1} presented a complete work on theoretical, practical and numerical aspects of geomechanical problems. The analysis and numerical simulation of Biot's model has became increasingly popular due to the wide range of applications such biomechanics, petroleum engineering, and environmental engineering.

Numerically solving the poroelasticity model is a challenging task due to its complex coupled nature. There is extensive literature on numerical methods for poroelasticity. For the two-field displacement-pressure formulation, Lewis and Schrefler~\cite{R.W. Lewis2} analyzed one and two-dimensional problems in consolidation and the problem of subsidence in Venice by using a continuous Galerkin (CG) method. Furthermore, Liu~\cite{R. Liu} implemented a discontinuous Galerkin (DG) method for both displacement and pressure.  A weak Galerkin (WG) method for the two-field formulations was considered in~\cite{X. Hu1,Y. Chen}. Since Phillips and Wheeler~\cite{P.J. Phillips1,P.J. Phillips2,P.J. Phillips3} developed a method that couples a CG method for the displacement with a mixed finite element method for the pressure and velocity, more and more researchers have begun to focus on the three-field formulation, which has displacement, Darcy's velocity, and pressure as unknowns. Yi~\cite{S.Y. Yi1} developed a nonconforming finite element method, which has been used to overcome nonphysical oscillations in the pressure variable. Hu et.al.~\cite{X. Hu2} also presented a nonconforming method based on the Crouzeix-Raviart elements for the displacements, lowest order Raviart-Thomas-N\'{e}d\'{e}lec elements for Darcy's velocity~\cite{P.A. Raviart,J.C. Nedelec}, and piecewise constant elements for the pressure. A DG scheme was proposed in~\cite{HongKraus2018} for the three-field formulation.  A WG scheme which couples a WG method for the displacement with a standard mixed finite element method for the pressure and Darcy's velocity, was proposed in~\cite{M. Sun} to avoid the locking. Other numerical schemes, such as least squares mixed finite element methods, were proposed in~\cite{Korsawe,Tchonkova} for the four-field formulation (displacement, stress, Darcy's velocity and pressure). And a new mixed finite element method for both the flow subproblem and the mechanical subproblem was introduced by Yi~\cite{S.Y. Yi14}, and then~\cite{S.Y. Yi17} presented the iteratively coupled solution strategies for the four-field formulation.  More recently, an element-based finite volume formulation was proposed in~\cite{Herminio T.Honorio18} to avoid pressure instabilities in poromechanics, and~\cite{I. Sokolova19} presented a multiscale finite volume method.

For the three-field formulation, one standard approximation scheme, piecewise constant approximation for the pressure and piecewise linear approximation for the displacements and fluid flux may cause instability and non-physical oscillations, a local pressure jump stabilization term was introduced in~\cite{L. Berger15} to ensure stability. One of the most frequently considered low-order schemes for the three-field formulation is P1-RT0-P0, i.e., piecewise linear elements for the displacement, Raviart-Thomas-N\'{e}d\'{e}lec elements for the Darcy's velocity, and piecewise constant elements for the pressure. However, the triple P1-RT0-P0 does not satisfy the Biot-Stokes stability condition uniformly with respect to the discretization and physical parameters of the problem\textcolor{red}{~\cite{V. Girault,C. Rodrigo,HongKraus2018}}. For example, when the permeability is small with respect to the mesh size, volumetric locking may occur~\cite{C. Rodrigo}. Due to the same reason, the hybridized P1-RT0-P0 scheme presented in~\cite{C. Niu} has the same stability issue. Recently, a stabilization strategy based on the macro-element theory and the local pressure jump approach for the hybridized P1-RT0-P0 formulation has been proposed in~\cite{M. Frigo20}, and an efficient solver was introduced to improve computational efficiency. 

In this paper, we apply the stabilization technique introduced in~\cite{C. Rodrigo} and enriching the piecewise linear finite element space with edge/face bubble functions for displacement to stabilize the hybrid P1-RT0-P0 scheme developed in~\cite{C. Niu}. As shown in~\cite{F. Brezzi,D.N. Arnold,H. Egger}, the hybridization technique, which was proposed in~\cite{T.H.H. Pian}, is characterized by the removal of the continuity of the normal component of Darcy's velocity along each element interface. Therefore, one particular advantage is that the degrees of freedom for Darcy's velocity can be eliminated by static condensation, which reduces the computational cost. For our proposed stabilized hybrid scheme, a perturbation of the bilinear form allows for the elimination of the bubble functions. Therefore, both the unknowns of Darcy's velocity and the bubble functions can be eliminated in our case, which results in a system with the same number of degrees of freedom as the standard P1-RT0-P0 discretization. The eliminated system is proved to be well-posed with respect to the discretization as well as the physical parameters, and thus, based on the framework developed in~\cite{D. Loghin,K.A. Mardal,James}, we develop robust block preconditioners to solve the resulting linear systems of equations efficiently. Moreover, due to the introduction of hybridization and eliminations of the fluid flux, in the implementation of the block preconditioners, we avoid solving complicated $\operatorname{grad} \operatorname{div}$-type subproblem, which requires special solvers~\cite{James}, and only need to solve standard $\operatorname{div} \operatorname{grad}$-type subproblem, which can be efficiently handled by standard multigrid methods.

The rest of the paper is organized as follows. In Section~\ref{sec:stable}, we briefly recall the Biot's model, and mixed variational formulation of the three-field formulation, and then devote to a stabilization technique with edge bubble functions and the perturbation of the bilinear form. The well-posedness of the resulting scheme, as well as the corresponding error analysis are also provided. In Section~\ref{sec:elimination} we discuss the elimination of bubbles and Darcy's velocity, and show the well-posedness of the eliminated system. Both the block diagonal and triangular preconditioners are presented in Section~\ref{sec:preconditioner}. Numerical results are presented in Section~\ref{sec:numerical} to validate the accuracy and efficiency of the stabilization method and demonstrate the robustness and effectiveness of the preconditioners. Finally, our conclusion are presented in Section 7.

\section{Stabilized Hybrid Mixed Finite Element Method}\label{sec:stable}
In this section, we first recall the Biot's consolidation model~\cite{M.A. Biot41} in a domain $\Omega \subset R^{d}$ $ (d=2,3)$, and then analyze the stabilized hybrid mixed finite element method.

\subsection{The Biot's Model}

In this subsection, we review Biot's consolidation model and present a three-field formulation of which the primary variables are displacement $\bm{u}$, pressure $p$ and Darcy's velocity $\bm{w}$.
\begin{equation} \label{e:MODEL}
\begin{array}{l}
-\nabla \cdot \bm{\sigma}^{\prime}+\alpha \nabla p =\bm{f},  \hspace{3.5mm} \text{in} \ \Omega, \\
 \bm{\sigma}^{\prime}=2 \mu \epsilon(\bm{u})+\lambda\ (\nabla \cdot\bm{u})\bm{I},  \hspace{3.5mm} \text{in} \ \Omega, \\
\displaystyle{\frac{\partial}{\partial t}(\frac{1}{M}p+\alpha\nabla \cdot \bm{u})+\nabla \cdot \bm{w}=g}, \hspace{3.5mm}\text{in} \ \Omega, \\
\bm{w}=-\kappa\nabla p, \hspace{3.5mm}\text{in} \ \Omega, \\
\end{array}
\end{equation}
where $\kappa$ stands for the permeability tensor, $M$ is the Biot modulus, and $\alpha$ is the Biot-Willis constant. $\lambda$ and $\mu$ are Lam\'{e} constants. Here, the effective stress tensor and the strain tensor are denoted by $\bm{\sigma}^{\prime}$ and $\epsilon(\bm{u})=\frac{1}{2}(\nabla \bm{u}+\nabla \bm{u}^{\top})$, and $\bm{I}$ is the identity tensor. Let $g$ be the volumetric source/sink term, and $\bm{f}$ represents the body force. The boundary conditions are
\[  p=0,  \hspace{3.5mm} \textup{ on} \ \overline{\Gamma}_{t},\ \ \ \ \ -\kappa\nabla p\cdot\textbf{n}=0,\hspace{3.5mm} \textup{on} \ \Gamma_{c},\]
\[ \bm{u}=0, \hspace{3.5mm} \textup{on} \ \overline{\Gamma}_{c}, \ \ \ \ \ \bm{\sigma}^{\prime} \textbf{n}=0,\hspace{3.5mm} \textup{on} \ \Gamma_{t},\]
 where \textbf{n} represents the outward unit normal vector to the boundary, $\overline{\Gamma}_{t}\cup \overline{\Gamma}_{c}=\overline{\partial \Omega}$
  with $\Gamma_{c}$ and $\Gamma_{t}$ being open (with respect to $\partial \Omega$) subsets of $\partial \Omega$ with nonzero measure.
   The initial condition at $t=0$ is given by,
   \[
p(\bm{x},0)=p_{0},\hspace{3.5mm} \bm{x} \in \Omega,\
\ \ \ \ \ \bm{u}(\bm{x},0)=\bm{u}_{0},\hspace{3.5mm}\bm{x} \in \Omega.
\]
Next, we give the function spaces which are used in the variational form:
\begin{eqnarray*}
&&\bm{V}=\{\bm{v} \in \bm{H}^{1}(\Omega): \bm{v}|_{\overline{\Gamma}_{c}}=\bm{0}\},\\
&&Q=L^{2}(\Omega),\\
&&\bm{W}=\{\bm{w} \in \bm{H}(\textup{div},\Omega): \bm{w}\cdot\textbf{n}|_{\Gamma_{c}}=0\},
\end{eqnarray*}
where $\bm{H}^{1}(\Omega)$ is the space of square integrable vector-valued functions whose first derivatives are also square
integrable, and $\bm{H}(\textup{div}, \Omega)$ contains the square integrable vector-valued functions with square integrable divergence.

Finally, we give the mixed variational formulation: For all $ t \in (0,t_{max}]$, find $( \bm{u},p,\bm{w})\in \bm{V}\times Q\times\bm{W} $, such that,
\begin{eqnarray*}
&&a(\bm{u},\bm{v})-\alpha(p,\nabla \cdot \bm{v})=(\bm{f},\bm{v}), \ \ \forall \ \bm{v} \in \bm{V},\\
&&\displaystyle{\alpha(\nabla \cdot\frac{ \partial\bm{u}}{\partial t},q)+\frac{1}{M}(\frac{\partial p}{\partial t},q)+(\nabla \cdot \bm{w},q)=(g,q)}, \ \  \forall \ q \in Q,\\
&&(\kappa^{-1}\bm{w},\bm{r})-(p,\nabla \cdot \bm{r})=0,  \ \  \forall \ \bm{r} \in \bm{W},
\end{eqnarray*}
where $a(\bm{u},\bm{v})=2\mu \int_{\Omega}\epsilon(\bm{u}):\epsilon(\bm{v})+\lambda \int_{\Omega}\nabla\cdot\bm{u}\nabla\cdot\bm{v}.$
Well-posedness of the continuous problem for this three field formulation was established by Lipnikov~\cite{K. Lipnikov}.

\subsection{Stabilized Hybrid Mixed Finite Element Method}\label{subsec:stable}
Let $ \{\mathcal{T}_{h}\}$ be a quasi-uniform regular partition of $\Omega$, the element $T \in {\mathcal{T}_{h}}$ is triangular (d=2)
or tetrahedral (d=3). Denote $\partial \mathcal{T}_{I}$ the set of interior edges, i.e., the set of common edges of $T^{+}$ $\cap$ $ T^{-} $ for all
neighboring $T^{+}, T^{-}\in {\mathcal{T}_{h}}$. Correspondingly, let $\partial \mathcal{T}_{B}$ denote the set of boundary edges, i.e., the set of the common edges of $T \cap \partial\Omega$ for all $ T \in{\mathcal{T}_{h}}$, and denote $ \partial {\mathcal{T}_{h}}= \partial \mathcal{T}_{I}\cup \partial\mathcal{T}_{B}$. For every edge $e \in \partial \mathcal{T}_{h}$, we associate it with a unit outer normal $\bm{n}_{e}$. For the boundary faces $e \in \partial \mathcal{T}_{B}$, we set $\bm{n}_{e}=\pm\bm{n}_{e,T}$, where $\bm{n}_{e,T}$ is the outward (with respect to $T \in \mathcal{T}_{h}$) unit normal vector to the edge $e$. For the interior edges $ e \in \partial \mathcal{T}_{I}$, the direction of $\bm{n}_{e}$ is fixed, while the particular direction of $\bm{n}_{e}$ is not important. Hence, we define $\bm{n}_{e}=\bm{n}_{e,T^{+}}=-\bm{n}_{e,T^{-}}$.

 Now we introduce the piecewise Sobolev spaces:
\begin{eqnarray*}
&&L^{2}(\mathcal{T}_{h})=\{p \in L^{2}(\Omega):p|_{T}\in L^{2}(T),\forall \ T \in \mathcal{T}_{h}\},\\
&&H^{1}(\mathcal{T}_{h})=\{p \in L^{2}(\Omega):p|_{T}\in H^{1}(T),\forall \ T \in \mathcal{T}_{h}\},\\
&&L^{2}(\partial \mathcal{T}_{h})=\{p \in  L^{2}(e),\forall \ e \in \partial\mathcal{T}_{h}\},
\end{eqnarray*}
where $L^{2}(T)$ denotes the set of square integrable functions on element $T$, $H^{1}(T)$ is the space of square integrable functions on element $T$
whose first derivatives are also square integrable, and $L^{2}(e)$ the set of square integrable functions on edge $e$.
The norm endowed with $L^{2}(\mathcal{T}_{h})$ is defined as $\|p\|_{h}:=\sqrt{(p,p)_{h}}$, and the corresponding inner product is defined as
\begin{eqnarray*}
 (p,q)_{h}=\sum\limits_{T \in \mathcal{T}_{h}}(p,q)_{T},\ \ (p,q)_{T}=\int_{T}pq dx.
\end{eqnarray*}
For $ \beta,\rho \in L^{2}(\partial \mathcal{T}_{h})$, we define the inner product on element interfaces as follows,
\begin{eqnarray*}
 (\beta,\rho)_{\partial\mathcal{T}_{h}}=\sum\limits_{T \in \mathcal{T}_{h}}(\beta,\rho)_{\partial T},\ \ (\beta,\rho)_{\partial T}=\int_{\partial T}\beta \rho ds,
\end{eqnarray*}
and the corresponding norms are denoted by $|\rho|_{\partial\mathcal{T}_{h}}:=\sqrt{(\rho,\rho)_{\partial\mathcal{T}_{h}}}$. For the vector-valued functions, the norm and inner product are defined in a similar manner with corresponding modifications.

Following the standard idea of hybrid mixed finite element method, we use completely discontinuous piecewise polynomial functions for the Darcy's
velocity and ensure the continuity of the normal fluxes over element interfaces by adding appropriate constraints and introducing a Lagrange multiplier.
We introduce the discrete finite element spaces for the displacement, pressure, Lagrange multiplier and Darcy's velocity,
\begin{equation}\label{spacesQBW}
\begin{array}{l}
\bm{V}_{h,1}=\{ \bm{v}_{h} \in \bm{V}:\bm{v}_{h}|_{T} \in [P_{1}(T)]^{d},\ \bm{v}_{h}|_{\partial \mathcal{T}_{B}} =\bm{0},\ \forall \ T \in \mathcal{T}_{h} \},\\
Q_{h}=\{ q_{h} \in L^{2}(\mathcal{T}_{h}):q_{h}|_{T} \in P_{0}(T),\forall \ T \in \mathcal{T}_{h} \}, \\
B_{h}=\{\rho_{h} \in L^{2}(\partial \mathcal{T}_{h}):\rho_{h} |_{e} \in P_{0}(e),\forall \ e \in \partial \mathcal{T}_{h} \},\\
\bm{W}_{h}=\{ \bm{w}_{h} \in [H^{1}(\mathcal{T}_{h})]^{d}:\bm{w}_{h} |_{T} \in RT_{0}(T),\bm{w}_{h}\cdot \bm{n}_{e}|_{\partial \mathcal{T}_{B}}=0, \ \forall \ T \in \mathcal{T}_{h} \},
\end{array}
\end{equation}
where $P_{0}(T)$ denotes the set of piecewise constant functions restricted to $T$ for each $T \in \mathcal{T}_{h}$ and $RT_{0}(T)=[P_{0}(T)]^{d}\oplus
\textup{span}(\bm{x} P_{0}(T))$ denotes the standard lowest order Raviart-Thomas-N\'{e}d\'{e}lec space.

For uniformly positive definite permeability tensor $\kappa$, the choice of spaces $\bm{V}_{h,1}$ has been successfully employed for numerical simulations of Biot's consolidation model (see~\cite{P.J. Phillips2,K. Lipnikov}). However, the heuristic considerations that expose some of the issues with this discretization are observed when $\kappa \rightarrow \bm{0}$. In such case, the discrete problem approaches a P1-P0 discretization of the Stokes' equation. As it is well known, the finite element pair, $\bm{V}_{h,1} \times Q_{h}$, does not satisfy the inf-sup condition and is unstable for the Stokes' problem.

Following~\cite{V. Girault,C. Rodrigo}, we design a stabilized hybrid mixed finite element method by enriching the piecewise linear finite element space $\bm{V}_{h,1}$, with edge bubble functions in 2D or face bubble functions in 3D. Now we introduce the stabilized finite element space $\bm{V}_{h}$ as
\begin{equation}\label{Vh}
\displaystyle{\bm{V}_{h}=\bm{V}_{h,1}\oplus\bm{V}_{b},\ \ \ \bm{V}_{b}=\text{span}\{\bm{\Phi}_{e}\}_{e \in \partial \mathcal{T}_{0,t}}},
\end{equation}
where $\bm{\Phi}_{e}=\varphi_{e}\bm{n}_{e}$, and $\varphi_{e}|_{T^{\pm}}=\varphi_{e,T^{\pm}}=\displaystyle{\prod_{l=1,l\neq j^{\pm}}^{d+1}
\lambda_{l,T^{\pm}}}$, for every face $e \in \partial \mathcal{T}_{h}$, $e = T^{+}\cap T^{-}$. Here, $\lambda_{l,T^{\pm}},l=1,...,(d+1)$ are
barycentric coordinates on $T^{\pm}$ and $j^{\pm}$ is the vertex opposite to face $e$ in $T^{\pm}$. Note that $\bm{\Phi}_{e}\in \bm{V}$ is a continuous piecewise polynomial function of degree $d$.

The degree of freedom associated with $\bm{V}_{h}$ are the values at the vertices of $\mathcal{T}_{h}$ and the total flux through $e \in \partial
\mathcal{T}_{0,t}$. The canonical interpolant $\Pi:C(\overline{\Omega})\mapsto \bm{V}_{h}$ is defined as:
\begin{equation*}
\begin{array}{l}
\displaystyle{\Pi\bm{v}=\Pi_{1}\bm{v}+\sum_{e\in\partial \mathcal{T}_{0,t}}\nu_{e}\bm{\Phi}_{e}},
\end{array}
\end{equation*}
where $\Pi_{1}:C(\overline{\Omega})\mapsto \bm{V}_{h,1}$ is the standard piecewise linear interpolant, and $\displaystyle{\nu_{e}=
\frac{1}{|e|}\int_{e}(I-\Pi_{1})\bm{v}}$.

 Finally, by using backward Euler time discretization with constant time step size $\tau$, the stabilized hybrid mixed finite element method is, for all $t_{n}=n \tau, n=1,2,...,$ find
 $(\bm{u}_{h}^{n},p_{h}^{n},\beta_{h}^{n},\bm{w}_{h}^{n})\in \bm{V}_{h}\times Q_{h}\times B_{h}\times\bm{W}_{h} $, such that,
\begin{equation}\label{e:stableHMFE}
\begin{array}{l}
a(\bm{u}_{h}^{n},\bm{v}_{h})-\alpha(p_{h}^{n},\nabla \cdot \bm{v}_{h})
=(\bm{f},\bm{v}_{h}),\  \forall \bm{v}_{h} \in \bm{V}_{h},\\
\displaystyle{\alpha(\nabla \cdot\bm{u}_{h}^{n},q_{h})+\frac{1}{M}( p_{h}^{n},q_{h}) +\tau(\nabla \cdot \bm{w}_{h}^{n},q_{h}) =\tau (\widetilde{g},q_{h})}, \ \forall  q_{h} \in Q_{h},\\
 \tau(\bm{w}_{h}^{n}\cdot \bm{n}_{e},\rho_{h})_{\partial\mathcal{T}_{h}}=0,  \ \forall \rho_{h} \in B_{h},\\
-\tau(p_{h}^{n},\nabla \cdot \bm{r}_{h})-\tau( \beta_{h}^{n},\bm{r}_{h}\cdot \bm{n}_{e})_{\partial\mathcal{T}_{h}}+\tau(\kappa^{-1}\bm{w}_{h}^{n},\bm{r}_{h})_{h}=0, \ \forall \bm{r}_{h} \in \bm{W}_{h},
\end{array}
\end{equation}
where $(\widetilde{g},q_{h})=\tau (g,q_{h})+\frac{1}{M}( p_{h}^{n-1},q_{h}) +\alpha(\nabla \cdot\bm{u}_{h}^{n-1},q_{h})$, $(\bm{u}_{h}^{n},p_{h}^{n},\beta_{h}^{n},\bm{w}_{h}^{n})\approx(\bm{u}(\cdot,t_{n}),p(\cdot,t_{n}),\beta(\cdot,t_{n}),\bm{w}(\cdot,t_{n}))$, $t_{n}=n\tau$, $n=1,2,...$, and the initial conditions are $\bm{u}_{h}^{1}=\bm{u}_{0}$, $p_{h}^{1}=p_{0}$.

\begin{rem}
When the space $\bm{V}_{h,1}$ is used for displacement, \eqref{e:stableHMFE} is just the hybrid mixed finite element method presented in~\cite{C. Niu}:
 For all $t_{n}=n \tau, n=1,2,...,$ find $(\bm{u}_{h}^{n},p_{h}^{n},\beta_{h}^{n},\bm{w}_{h}^{n})\in \bm{V}_{h,1}\times Q_{h}\times B_{h}\times
\bm{W}_{h} $, such that,
\begin{equation}\label{e:HMFE}
\begin{array}{l}
a(\bm{u}_{h}^{n},\bm{v}_{h})-\alpha(p_{h}^{n},\nabla \cdot \bm{v}_{h})
=(\bm{f},\bm{v}_{h}),\ \forall  \bm{v}_{h} \in \bm{V}_{h,1},\\
\displaystyle{\alpha(\nabla \cdot\bm{u}_{h}^{n},q_{h})+\frac{1}{M}( p_{h}^{n},q_{h}) +\tau(\nabla \cdot \bm{w}_{h}^{n},q_{h}) =(\widetilde{g},q_{h})}, \ \forall  q_{h} \in Q_{h},\\
 \tau(\bm{w}_{h}^{n}\cdot \bm{n}_{e},\rho_{h})_{\partial\mathcal{T}_{h}}=0, \ \forall  \rho_{h} \in B_{h},\\
-\tau(p_{h}^{n},\nabla \cdot \bm{r}_{h})-\tau( \beta_{h}^{n},\bm{r}_{h}\cdot \bm{n}_{e})_{\partial\mathcal{T}_{h}}+\tau(\kappa^{-1}\bm{w}_{h}^{n},\bm{r}_{h})_{h}=0, \ \forall  \bm{r}_{h} \in \bm{W}_{h}.
\end{array}
\end{equation}
\end{rem}

\subsection{Well-posedness}
To show that the discrete problem (\ref{e:stableHMFE}) is well-posed, we introduce a bilinear form on the space $S_{h}:=\bm{V}_{h}\times Q_{h}\times B_{h}\times\bm{W}_{h}$,
\begin{equation*}\label{e:B}
\begin{array}{l}
\mathcal{B}((\bm{u}_{h},p_{h},\beta_{h},\bm{w}_{h}),(\bm{v}_{h},q_{h},\rho_{h},\bm{r}_{h}))\\
:=a(\bm{u}_{h},\bm{v}_{h})-\alpha(p_{h},\nabla \cdot \bm{v}_{h})+\alpha(\nabla \cdot \bm{u}_{h},q_{h})+\displaystyle{\frac{1}{M}(p_{h},q_{h})}+\tau(\nabla \cdot\bm{w}_{h}, q_{h})\\
\hspace{4mm}
+\tau(\bm{w}_{h}\cdot \bm{n}_{e},\rho_{h})_{\partial\mathcal{T}_{h}}-\tau( p_{h},\nabla \cdot\bm{r}_{h})
-\tau(\beta_{h},\bm{r}_{h}\cdot \bm{n}_{e})_{\partial\mathcal{T}_{h}}+\tau(\kappa^{-1}\bm{w}_{h},\bm{r}_{h})_{h},
\end{array}
\end{equation*}
and a norm,
\begin{equation*}
\begin{array}{l}
\vertiii{(\bm{u}_{h},p_{h},\beta_{h},\bm{w}_{h})}
:= \displaystyle{\biggl[\|\bm{u}_{h}\|_{a}^{2}+\delta\|p_{h}\|^{2}+\frac{1  }{h}|\beta_{h}|_{\partial\mathcal{T}_{h}}^{2}
+\tau\|\bm{w}_{h}\|_{h,\kappa^{-1}}^{2}}\\
\hspace{37mm}+\displaystyle{\frac{\tau^{2}}{\delta} \|\nabla\cdot \bm{w}_{h}\|_{h}^{2}+\tau^{2} h\|\bm{w}_{h}\cdot \bm{n}_{e}\|_{h}^{2}\biggr]^{1/2}},\\
\end{array}
\end{equation*}
where $\displaystyle{\zeta=\sqrt{\lambda+2 \mu/d}}$, $\displaystyle{\delta=\frac{\alpha^{2}}{\zeta^{2}}+\frac{1}{M}}$, and $\|\bm{w}\|_{\mathcal{T}_{h},\kappa^{-1}}
=(\kappa^{-1}\bm{w},\bm{w})^{1/2}$. Then, we have the following theorem to show the bilinear form $\mathcal{B}(\cdot,\cdot)$ is inf-sup stable and continuous, which shows that the hybrid mixed finite element method is well-posed at each time step.

\begin{thm} \label{t:stab}
 If the finite element spaces $Q_{h}, B_{h}, \bm{W}_{h}$  are chosen as (\ref{spacesQBW}), and $\bm{V}_{h}$ as (\ref{Vh}), the following \emph{inf-sup} condition holds
 \begin{equation*}\label{wellposedness}
\begin{array}{l}
 \displaystyle{\sup_{(\bm{v}_{h},q_{h},\rho_{h},\bm{r}_{h})\in S_{h}}}
 \frac{\mathcal{B}((\bm{u}_{h},p_{h},\beta_{h},\bm{w}_{h}),(\bm{v}_{h},q_{h},\rho_{h},\bm{r}_{h})) }{\vertiii{(\bm{v}_{h},q_{h},\rho_{h},\bm{r}_{h})}}
\geq  \gamma_{\ast}\vertiii{(\bm{u}_{h},p_{h},\beta_{h},\bm{w}_{h})}, \ \
\end{array}
\end{equation*}
and, $\mathcal{B}(\cdot , \cdot)$ is continuous with respect to $\vertiii{\cdot}$, i.e.,
\begin{equation*}\label{continuous}
\begin{array}{l}
 |\mathcal{B}((\bm{u}_{h},p_{h},\beta_{h},\bm{w}_{h}),(\bm{v}_{h},q_{h},\rho_{h},\bm{r}_{h}))|\leq \gamma^{\ast}
\vertiii{(\bm{u}_{h},p_{h},\beta_{h},\bm{w}_{h})}\vertiii{(\bm{v}_{h},q_{h},\rho_{h},\bm{r}_{h})}. \ \
\end{array}
\end{equation*}
Here, the constants $\gamma_{\ast}$, $ \gamma^{\ast}> 0$ are independent of mesh size $h$, time step size $\tau$, and the physical parameters.
\end{thm}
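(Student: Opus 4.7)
The statement splits into a continuity estimate and an inf--sup lower bound, both with respect to the parameter-weighted norm $\vertiii{\cdot}$, and the two halves are done quite differently.

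For \emph{continuity} the plan is Cauchy--Schwarz, term by term, on each of the nine pieces of $\mathcal{B}$, using three elementary facts: $\|\nabla\cdot\bm{v}_h\|\le\zeta^{-1}\|\bm{v}_h\|_a$ (which follows from $\|\nabla\cdot\bm{v}\|^{2}\le d\,\|\epsilon(\bm{v})\|^{2}$ combined with the definition of $\|\cdot\|_a$), $\alpha/\zeta\le\sqrt{\delta}$ and $1/M\le\delta$. The two hybrid boundary terms I would split as $(\tau\sqrt h\,\|\bm{w}_h\cdot\bm{n}_e\|_h)(|\rho_h|_{\partial\mathcal{T}_h}/\sqrt h)$, and symmetrically for $\tau(\beta_h,\bm{r}_h\cdot\bm{n}_e)_{\partial\mathcal{T}_h}$, so that each factor matches a piece of $\vertiii{\cdot}$. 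This yields a $\gamma^{\ast}$ depending only on shape regularity.

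For the \emph{inf--sup} condition I would use the classical four-correction construction: given $(\bm{u}_h,p_h,\beta_h,\bm{w}_h)\in S_h$, take the test as the diagonal $(\bm{u}_h,p_h,\beta_h,\bm{w}_h)$ plus four small corrections. The diagonal alone produces $\|\bm{u}_h\|_a^{2}+M^{-1}\|p_h\|^{2}+\tau\|\bm{w}_h\|_{h,\kappa^{-1}}^{2}$ because every hybrid coupling term cancels in pairs, so the four corrections only have to recover the four remaining pieces of $\vertiii{\cdot}^{2}$. First, on the displacement side, add $-\eta_1(\alpha/\zeta^{2})\hat{\bm{v}}_h$, where $\hat{\bm{v}}_h\in\bm{V}_h$ is taken from the bubble-enriched Stokes-type discrete inf--sup $(\nabla\cdot\hat{\bm{v}}_h,p_h)\ge c\|p_h\|^{2}$ with $\|\hat{\bm{v}}_h\|_a\le C\zeta\|p_h\|$ --- precisely the property that the enrichment $\bm{V}_h=\bm{V}_{h,1}\oplus\bm{V}_b$ is designed to deliver; this recovers the missing $(\alpha^{2}/\zeta^{2})\|p_h\|^{2}$ part of $\delta\|p_h\|^{2}$. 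Second, on the pressure side, add $\eta_2(\tau/\delta)\nabla\cdot\bm{w}_h\in Q_h$, recovering $(\tau^{2}/\delta)\|\nabla\cdot\bm{w}_h\|_h^{2}$. Third, on the flux side, add $-\eta_3 h^{-1}\tilde{\bm{r}}_h$, where $\tilde{\bm{r}}_h\in\bm{W}_h$ is the standard hybrid-$RT_0$ lifting with $\tilde{\bm{r}}_h\cdot\bm{n}_e=\beta_h$ on every face and $\|\tilde{\bm{r}}_h\|^{2}\le Ch\,|\beta_h|_{\partial\mathcal{T}_h}^{2}$, recovering $(\tau/h)|\beta_h|_{\partial\mathcal{T}_h}^{2}$. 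Fourth, on the multiplier side, add $\eta_4\tau h\,[\bm{w}_h\cdot\bm{n}]_e$, which lies in $B_h$ because $\bm{w}_h\cdot\bm{n}_{e,T}$ is piecewise constant on each face, and which --- together with the $RT_0$ scaling identity $h\|\bm{w}_h\cdot\bm{n}\|_{\partial T}^{2}\simeq\|\bm{w}_h\|_T^{2}$ --- delivers the final piece $\tau^{2}h\|\bm{w}_h\cdot\bm{n}_e\|_h^{2}$. Every cross term these corrections generate is then Cauchy--Schwarz'd, scaled via $\alpha^{2}/(\zeta^{2}\delta)\le 1$ and $1/(M\delta)\le 1$ and $RT_0$ inverse inequalities, and absorbed into earlier positive terms by fixing $\eta_1,\ldots,\eta_4$ small enough \emph{in that order}; a direct estimate then shows that the constructed test has $\vertiii{\cdot}$-norm bounded by a constant multiple of $\vertiii{(\bm{u}_h,p_h,\beta_h,\bm{w}_h)}$, closing the inf--sup with a $\gamma_{\ast}$ independent of $h,\tau,\lambda,\mu,\alpha,M,\kappa$.

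The \emph{main obstacle} is the parameter-robust bookkeeping in the absorption step: each cross term must be dominated by the correct piece of $\vertiii{\cdot}^{2}$ with the correct scaling in $\alpha/\zeta$, $1/M$, $1/\delta$ and $\kappa$, and the $\eta_i$ have to be chosen once and for all so that the gains outweigh the losses \emph{uniformly} in all of $h,\tau,\lambda,\mu,\alpha,M,\kappa$. The enrichment $\bm{V}_b$ is what makes the first correction parameter-independent (without it, the Stokes-type inf--sup degrades as $\kappa\to 0$, which is exactly the defect of the scheme in~\cite{C. Niu} that the present stabilization is designed to fix), and the fourth correction is the most delicate because the element-wise discontinuous structure of $\bm{W}_h$ forces a separate treatment of the normal-trace jump and the face-average of $\bm{w}_h\cdot\bm{n}_e$ while keeping the constants free of $\kappa$.
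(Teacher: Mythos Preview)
Your plan for continuity, the first correction (bubble-enriched Stokes inf--sup), the second correction ($q_h\leftarrow q_h+\eta_2(\tau/\delta)\nabla\cdot\bm{w}_h$), and the fourth correction ($\rho_h\leftarrow\rho_h+\eta_4\tau h\,\bm{w}_h\cdot\bm{n}_e$) all match the paper essentially verbatim.

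The gap is in your third correction. You propose the plain $RT_0$ normal-trace lifting $\tilde{\bm{r}}_h\cdot\bm{n}_e=\beta_h$ with the \emph{unweighted} scaling bound $\|\tilde{\bm{r}}_h\|^{2}\le Ch\,|\beta_h|_{\partial\mathcal{T}_h}^{2}$, and you say this recovers $(\tau/h)|\beta_h|_{\partial\mathcal{T}_h}^{2}$. Two things fail here. First, the piece of $\vertiii{\cdot}^{2}$ you must recover is $(1/h)|\beta_h|_{\partial\mathcal{T}_h}^{2}$, with no $\tau$; your correction, entering only through the term $-\tau(\beta_h,\bm{r}_h\cdot\bm{n}_e)_{\partial\mathcal{T}_h}$, produces an extra factor $\tau$ and therefore collapses as $\tau\to 0$. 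Second --- and this is the real obstruction --- if you rescale by $1/(\tau h)$ to remove that $\tau$, the cross term $\tau(\kappa^{-1}\bm{w}_h,\tilde{\bm{r}}_h)_h$ forces you to control $\|\tilde{\bm{r}}_h\|_{h,\kappa^{-1}}^{2}$, and your unweighted lifting bound only gives $\|\tilde{\bm{r}}_h\|_{h,\kappa^{-1}}^{2}\lesssim\kappa^{-1}h\,|\beta_h|_{\partial\mathcal{T}_h}^{2}$. After Young's inequality the resulting term carries a factor $\kappa^{-1}/\tau$ (or $\tau\kappa^{-1}$ with your original scaling), so the absorption is \emph{not} uniform in $\kappa$ or $\tau$. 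This is exactly the regime --- $\kappa$ small relative to $h$ and $\tau$ --- that the stabilization is meant to handle.

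The paper closes this gap by replacing the naive lifting with a $\kappa$-adapted one: on each element it solves the reaction--diffusion problem $-\tau^{-1}\nabla\cdot(\kappa\nabla u)+\theta^{2}\tau^{-2}u=0$ with Neumann data $\kappa\,\partial u/\partial n=\bm{z}_n$, sets $\bm{z}=\kappa\nabla u$, and then takes $\bm{z}_h=\Pi^{\mathrm{div}}_T\bm{z}$ with $\bm{z}_h\cdot\bm{n}_e=(\tau h)^{-1}\beta_h$. The point of the auxiliary PDE is that it delivers the \emph{weighted} stability estimate
\[
\tau\|\bm{z}_h\|_{h,\kappa^{-1}}^{2}+\frac{\tau^{2}}{\theta^{2}}\|\nabla\cdot\bm{z}_h\|_h^{2}\;\le\;\frac{C}{h}\,|\beta_h|_{\partial\mathcal{T}_h}^{2},
\]
with $C$ independent of $\kappa$, $\tau$, and $h$ (the construction is the Sch\"{o}berl-type argument referenced in the paper). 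This is precisely the bound your unweighted lifting cannot provide, and with it the cross terms $\tau(\kappa^{-1}\bm{w}_h,\bm{z}_h)_h$ and $\tau(p_h,\nabla\cdot\bm{z}_h)$ absorb cleanly into $\tau\|\bm{w}_h\|_{h,\kappa^{-1}}^{2}$ and $\delta\|p_h\|^{2}$. Everything else in your plan survives once you swap in this lifting.
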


\begin{proof}
First, for $d=2,3$, $2\mu (\epsilon(\bm{u}):\epsilon(\bm{v}))\leq a(\bm{u},\bm{v})\leq( 2\mu+d \lambda) (\epsilon(\bm{u}):\epsilon(\bm{v}))$, then $(\nabla \cdot \bm{v},\nabla \cdot \bm{v})\leq d (\epsilon(\bm{u}):\epsilon(\bm{v}))$, and
\begin{equation*}
\begin{array}{l}
\displaystyle{( \frac{2\mu}{d}+ \lambda)\|\nabla \cdot \bm{v}\|^{2} \leq \| \bm{v}\|_{a}^{2}
\Rightarrow \|\nabla \cdot \bm{v}\|\leq \frac{1}{\zeta}\| \bm{v}\|_{a}}.\\
\end{array}
\end{equation*}
According to the definition of $\zeta$, we have $\| \bm{v}\|_{a}\leq \sqrt{d}\zeta\| \bm{v}\|_{1}$. Due to our choice of finite element spaces, for a given $p \in Q_{h}$, there exists an $\bm{h} \in \bm{V}_{h}$ such that $\displaystyle{(p, \nabla \cdot \bm{h})\geq \frac{\gamma_{B}}{\zeta}\|p\|^{2}}$ and $\|\bm{h}\|_{a}=\|p\|$~\cite{V. Girault}.

Next, following~\cite{J. Schoberl}, let $\bm{z}_{\bm{n}}\in H^{-1/2}(T)$, we solve the weak form of the scalar equation $\displaystyle{-\frac{1}{\tau}
\nabla\cdot\kappa\nabla u +\frac{\theta^{2}}{\tau^{2}}u=0}$ with boundary conditions $\displaystyle{\kappa\frac{\partial u}{\partial n}}$
  $\displaystyle{=\bm{z}_{\bm{n}}}$, and there exists a unique solution $u \in H^{1}(T)$ such that,
\begin{equation*}
\frac{1}{\tau}\|\nabla u\|^{2}_{\kappa}+\frac{\theta^{2}}{\tau^{2}}\| u\|^{2}\leq C\|\bm{z}_{\bm{n}}\|^{2}_{H^{-1/2}(T)},
\end{equation*}
where, $\|\nabla u\|_{\kappa}=(\kappa \nabla u,\nabla u)^{1/2}$.
Set $\bm{z}=\kappa\nabla u$, then $\nabla \cdot \bm{z}=\kappa \triangle u=\displaystyle{\frac{\theta^{2}}{\tau}u}$ and $\bm{z}_{\bm{n}}=\bm{z} \cdot \bm{n}_{e}$, thus,
\begin{equation*}
\frac{1}{\tau}\|\kappa^{-1}\bm{z}\|^{2}_{\kappa}+\frac{\theta^{2}}{\tau^{2}}\| \frac{\tau}{\theta^{2}}\nabla \cdot \bm{z}\|^{2}\leq C\|\bm{z} \cdot \bm{n}_{e}\|^{2}_{H^{-1/2}(T)}.
\end{equation*}
Immediately, we have,
\begin{equation}\label{zn}
 \frac{1}{\tau^{2}}\biggl(\tau\|\bm{z}\|^{2}_{h,\kappa^{-1}}+\frac{\tau^{2}}{\theta^{2}}\|\nabla \cdot \bm{z}\|^{2}\biggr)\leq C\|\bm{z} \cdot \bm{n}_{e}\|^{2}_{H^{-1/2}(T)}.
\end{equation}
Introduce the interpolation operator $\Pi^{\text{div}}_{T}$~\cite{F. Brezzi}:
  $H(\text{div},T)\rightarrow RT_{0}(T)$ as $\bm{z}_{h}|_{T}=\Pi^{\text{div}}_{T}\bm{z}|_{T}$. Let $\Pi^{L^{2}}_{T}$ be the $L^{2}$ projection on $P_{0}(T)$,
  we have $\nabla \cdot\bm{z}_{h}|_{T}=\nabla \cdot\Pi^{\text{div}}_{T}\bm{z}|_{T}=\Pi^{L^{2}}_{T}\nabla \cdot \bm{z}|_{T}$. Let $\bm{z}_{h} \cdot \bm{n}_{e}=\displaystyle{\frac{1}{\tau h} \beta_{h}\in B_{h}}$, then we have
 \begin{equation}\label{zh}
 \begin{array}{l}
\displaystyle{\tau\|\bm{z}_{h}\|^{2}_{h,\kappa^{-1}}+\frac{\tau^{2}}{\theta^{2}}\| \nabla \cdot \bm{z}_{h}\|_{h}^{2}}\\
\displaystyle{=\sum_{T\in \mathcal{T}_{h}}\biggl(\tau\|\Pi^{\text{div}}_{T}\bm{z}\|^{2}_{T,\kappa^{-1}}+\frac{\tau^{2}}{\theta^{2}} \| \Pi^{L^{2}}_{T}\nabla \cdot \bm{z}\|_{T}^{2}\biggr)}\\
\displaystyle{\leq\sum_{T\in \mathcal{T}_{h}}\biggl(\tau\|\bm{z}\|^{2}_{T,\kappa^{-1}}+\frac{\tau^{2}}{\theta^{2}}\|\nabla \cdot \bm{z}\|_{T}^{2}\biggr)}\\
\displaystyle{\leq\sum_{T\in \mathcal{T}_{h}}\frac{C}{h} \|\beta_{h}\|_{L^{2}(T)}^{2}}\\
\displaystyle{\leq \frac{C}{h} |\beta_{h}|_{\partial\mathcal{T}_{h}}^{2}},
\end{array}
\end{equation}
where the properties of the operators $\Pi^{\text{div}}_{T}$ and $\Pi^{L^{2}}_{T}$~\cite{R. Hiptmair} derives the first inequality, the second inequality comes from (\ref{zn}) directly and the third inequality is derived from the property of space $H^{-1/2}$~\cite{J. Schoberl}.

Let $\bm{v}_{h}=\bm{u}_{h}-\theta\bm{h}$, $q_{h}=p_{h}+\theta_{1}\tau\nabla \cdot\bm{w}_{h}$, $\rho_{h}=\beta_{h}+\theta_{2}\tau h \bm{w}_{h}\cdot
\bm{n}_{e}$, and $\bm{r}_{h}=\bm{w}_{h}-\theta_{3}\bm{z}_{h}$, then by the Cauchy-Schwarz inequality and (\ref{zh}), we have,
\begin{align*}
&\mathcal{B}((\bm{u}_{h},p_{h},\beta_{h},\bm{w}_{h}),(\bm{v}_{h},q_{h},\rho_{h},\bm{r}_{h})) \\
&=\|\bm{u}_{h}\|_{a}^{2}-\theta a(\bm{u}_{h},\bm{h})+\theta\alpha(p_{h},\nabla \cdot \bm{h})
+\theta_{1}\alpha\tau(\nabla \cdot \bm{u}_{h},\nabla \cdot\bm{w}_{h})+\displaystyle{\frac{1}{M}}\|p_{h}\|^{2}\\
&\hspace{4mm}
+\displaystyle{\theta_{1}\frac{\tau}{M}} (p_{h},\nabla \cdot\bm{w}_{h})
+\theta_{1}\tau^{2}\|\nabla \cdot\bm{w}_{h}\|_{h}^{2}
+\theta_{2}\tau^{2} h\|\bm{w}_{h}\cdot \bm{n}_{e}\|_{h}^{2}+\theta_{3}\tau(p_{h},\nabla \cdot\bm{z}_{h})\\
&\hspace{4mm}
+\theta_{3}\tau(\beta_{h},\bm{z}_{h}\cdot \bm{n}_{e})_{\partial\mathcal{T}_{h}}
+\tau\|\bm{w}_{h}\|_{h,\kappa^{-1}}^{2}-\theta_{3}\tau(\kappa^{-1}\bm{w}_{h},\bm{z}_{h})_{h} \\
&\geq \|\bm{u}_{h}\|_{a}^{2}-\displaystyle{\frac{1}{2}\|\bm{u}_{h}\|_{a}^{2}-\frac{\theta^{2}}{2}\|\bm{h}\|_{a}^{2}+
\theta\frac{\alpha\gamma_{B}}{\zeta}}\|p_{h}\|^{2}
-\displaystyle{\frac{\theta_{1}\alpha^{2}}{2}\|\nabla \cdot\bm{u}_{h}\|^{2}}\\
&\hspace{4mm}
\displaystyle{-\frac{\theta_{1}\tau^{2}}{2}\|\nabla \cdot\bm{w}_{h}\|_{h}^{2}
+\frac{1}{M}\|p_{h}\|^{2}
-\frac{3\theta_{1}}{2{M}^{2}}\|p_{h}\|^{2}-\frac{\theta_{1}\tau^{2}}{6}\|\nabla \cdot\bm{w}_{h}\|_{h}^{2}}\\
&\hspace{4mm}
\displaystyle{+\theta_{1}\tau^{2}\|\nabla \cdot\bm{w}_{h}\|_{h}^{2}}
+\displaystyle{\theta_{2}\tau^{2} h\|\bm{w}_{h}\cdot \bm{n}_{e}\|_{h}^{2}
-\frac{\theta^{2}}{2}\|p_{h}\|^{2}-\frac{\theta_{3}^{2}\tau^{2}}{2\theta^{2}}\|\nabla \cdot\bm{z}_{h}\|_{h}^{2}}\\
&\hspace{4mm}
\displaystyle{+\frac{\theta_{3}}{h} |\beta_{h}|_{\partial\mathcal{T}_{h}}^{2}
+\tau\|\bm{w}_{h}\|_{h,\kappa^{-1}}^{2}-\frac{\tau}{2}\|\bm{w}_{h}\|_{h,\kappa^{-1}}^{2}-\frac{\theta_{3}^{2}\tau}{2}\|\bm{z}_{h}\|_{h,\kappa^{-1}}^{2}}\\
& \geq \displaystyle{\biggl(\frac{1}{2}-\frac{\theta_{1}\alpha^{2}}{2\zeta^{2}}\biggr)\|\bm{u}_{h}\|_{a}^{2}+\biggl(\frac{1}{M}-\frac{3\theta_{1}}{2{M}^{2}}
 +\frac{\theta \alpha\gamma_{B}}{\zeta}-\frac{\theta^{2}}{2}-\frac{\theta^{2}}{2}\biggr)\|p_{h}\|^{2}}\\
&\hspace{4mm}
 \displaystyle{+\biggl(\theta_{3}
 -\frac{C\theta_{3}^{2}}{2}\biggr)\frac{1}{h}  |\beta_{h}|_{\partial\mathcal{T}_{h}}^{2}+\frac{\tau}{2}\|\bm{w}_{h}\|_{h,\kappa^{-1}}^{2}+\frac{\tau^{2}\theta_{1}}{3} \|\nabla \cdot \bm{w}_{h}\|_{h}^{2}
 +\theta_{2}\tau^{2} h\|\bm{w}_{h}\cdot \bm{n}_{e}\|_{h}^{2}}\\
&\geq \gamma_{1}\vertiii{(\bm{w}_{h},p_{h},\beta_{h},\bm{u}_{h})}^{2},
\end{align*}
where $\displaystyle{\gamma_{1}=\min\{ \frac{ \gamma_{B}^{2}}{4}, \frac{1}{2C},\frac{1}{6}\}}$, and we choose $\displaystyle{\theta=\frac{ \alpha\gamma_{B}}{2\zeta}}$, $\displaystyle{\theta_{1}=\frac{1}{2\delta}}$, $\theta_{2}=1$, and $\displaystyle{\theta_{3}=\frac{1}{C}}$.
Then by the Cauchy-Schwarz inequality, we have,
\begin{align*}
&\vertiii{(\bm{v}_{h},q_{h},\rho_{h},\bm{r}_{h})}^2\\
&=\|\bm{u}_{h}-\theta\bm{h}\|_{a}^{2}+\delta\|p_{h}+\theta_{1}\tau\nabla \cdot\bm{w}_{h}\|_{h}^{2}
\displaystyle{+\frac{1}{h}|\beta_{h}+\theta_{2}\tau h\bm{w}_{h}\cdot\bm{n}_{e}|_{\partial\mathcal{T}_{h}}^{2}}\\
&\hspace{4mm}
+\displaystyle{\tau\|\bm{w}_{h}-\theta_{3}\bm{z}_{h}\|^{2}_{h,\kappa^{-1}}+\frac{\tau^{2}}{\delta} \|\nabla \cdot (\bm{w}_{h}-\theta_{3}\bm{z}_{h})\|_{h}^{2}+\tau^{2} h \|(\bm{w}_{h}-\theta_{3}\bm{z}_{h})\cdot \bm{n}_{e}\|_{h}^{2}}\\
&\leq\displaystyle{2\|\bm{u}_{h}\|_{a}^{2}+2\biggl(\delta+\frac{\gamma_{B}^{2} \alpha^{2}}{4\zeta^{2}}\biggr)\|p_{h}\|^{2}
 +\biggl(\frac{2}{C}+\frac{\gamma_{B}^{2}}{2C}+\frac{2}{C^{2}}+2\biggr)\frac{1}{h}|\beta_{h}|_{\partial\mathcal{T}_{h}}^{2}}\\
 &\hspace{4mm}+
 \displaystyle{2\tau\|\bm{w}_{h}\|_{h,\kappa^{-1}}^{2}+\frac{5\tau^2}{2\delta}\|\nabla \cdot \bm{w}_{h}\|_{h}^{2}
 +4\tau^{2} h\|\bm{w}_{h}\cdot \bm{n}_{e}\|_{h}^{2}}\\
&\leq \gamma_{2}^{2}\vertiii{(\bm{w}_{h},p_{h},\beta_{h},\bm{u}_{h})}^2,
\end{align*}
where $\displaystyle{\gamma_{2}^{2}=\max\{2+\frac{\gamma_{B}^{2}}{2}, \frac{2}{C}+\frac{\gamma_{B}^{2}}{2C}+\frac{2}{C^{2}}+2,4\}}$. Therefore,
the inf-sup condition holds with $\displaystyle{\gamma_{\ast}=\frac{\gamma_{1}}{\gamma_{2}}}$.

 For the boundedness of $\mathcal{B}(\cdot,\cdot)$, using the Cauchy-Schwarz inequality for all the terms on right hand side and $\displaystyle{\|\nabla \cdot \bm{v}\|\leq \frac{1}{\zeta}\| \bm{v}\|_{a}}$, it is easy to show that $\mathcal{B}(\cdot,\cdot)$ is continuous with respect to norm $\vertiii{\cdot}$.
 \end{proof}

\subsection{ Error Estimates}
To derive the error analysis of the fully discrete scheme, following the error analysis in~\cite{F. Brezzi,C. Rodrigo,V. Thomee}, we first define the following elliptic projections $\overline{\bm{u}}_{h}\in \bm{V}_{h}$, $\overline{p}_{h}\in Q_{h}$, $\overline{\beta}_{h}\in B_{h}$ and $\overline{\bm{w}}_{h}\in \bm{W}_{h}$ for $t>0$,
\begin{equation}\label{e:elliptic projections}
\begin{array}{l}
a(\overline{\bm{u}}_{h},\bm{v}_{h})- \alpha(\overline{p}_{h},\nabla \cdot\bm{v}_{h}) = a(\bm{u},\bm{v}_{h})-\alpha (p,\nabla \cdot\bm{v}_{h}) ,\ \forall\ \bm{v}_{h} \in \bm{V}_{h},\\
(\nabla \cdot \overline{\bm{w}}_{h},q_{h}) =(\nabla \cdot \bm{w},q_{h}),\ \forall\ q_{h} \in Q_{h},\\
(\overline{\bm{w}}_{h}\cdot \bm{n}_{e},\rho_{h})_{\partial\mathcal{T}_{h}} = (\bm{w}\cdot \bm{n}_{e},\rho_{h})_{\partial\mathcal{T}_{h}},\  \forall\ \rho_{h} \in B_{h},\\
(\kappa^{-1}\overline{\bm{w}}_{h},\bm{r}_{h})_{h}-(\overline{p}_{h},\nabla \cdot \bm{r}_{h})-( \overline{\beta}_{h},\bm{r}_{h}\cdot \bm{n}_{e})_{\partial\mathcal{T}_{h}}\\=(\kappa^{-1}\bm{w},\bm{r}_{h})_{h}-(p,\nabla \cdot \bm{r}_{h})-( \beta,\bm{r}_{h}\cdot \bm{n}_{e})_{\partial\mathcal{T}_{h}}, \ \forall\ \bm{r}_{h} \in\bm{W}_{h}.
\end{array}
\end{equation}
We split the errors as follows,
\begin{equation*}
\begin{array}{l}
\bm{u}(t_{n})-\bm{u}_{h}^{n}=\bm{u}(t_{n})-\overline{\bm{u}}_{h}^{n}-(\bm{u}_{h}^{n}-\overline{\bm{u}}_{h}^{n})=: \rho_{\bm{u}}^{n}-e_{\bm{u}}^{n},\\
p(t_{n})-p_{h}^{n}=p(t_{n})-\overline{p}_{h}^{n}-(p_{h}^{n}-\overline{p}_{h}^{n})=:\rho_{p}^{n}-e_{p}^{n},\\
\beta(t_{n})-\beta_{h}^{n}=\beta(t_{n})-\overline{\beta}_{h}^{n}-(\beta_{h}^{n}-\overline{\beta}_{h}^{n})=: \rho_{\beta}^{n}- e_{\beta}^{n},\\
\bm{w}(t_{n})-\bm{w}_{h}^{n}= \bm{w}(t_{n})-\overline{\bm{w}}_{h}^{n}-(\bm{w}_{h}^{n}-\overline{\bm{w}}_{h}^{n})=:\rho_{\bm{w}}^{n}- e_{\bm{w}}^{n}.
\end{array}
\end{equation*}
And the following lemma shows the error estimates for the elliptic projection defined in (\ref{e:elliptic projections}).
\begin{lem}\label{lem:projection property} The following error estimates for the elliptic projections defined in (\ref{e:elliptic projections}) hold for $t>0$,
\begin{equation}\label{e:projection property}
\begin{array}{l}
\|\rho_{\bm{u}}\|_{1}\leq c h (\|\bm{u}\|_{2}+\|p\|_{1}),\\
\|\rho_{\bm{w}}\|\leq c h \| \bm{w}\|_{1},\\
\|\rho_{p}\| \leq c h (\|p\|_{1}+ \| \bm{w}\|_{1}).\\
\end{array}
\end{equation}
\end{lem}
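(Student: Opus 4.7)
The strategy is to decouple the elliptic projection system. Observe that the second, third and fourth equations in (\ref{e:elliptic projections}) do not involve $\overline{\bm{u}}_h$ and constitute precisely the standard hybrid RT0-P0 mixed discretization of the stationary Darcy problem associated with $(\bm{w},p)$. Its error analysis is classical (see~\cite{F. Brezzi}) and supplies the estimates for $\rho_{\bm{w}}$ and $\rho_p$. The first equation of (\ref{e:elliptic projections}) is then a linear elasticity Galerkin problem for $\overline{\bm{u}}_h$ with a data perturbation driven by $\rho_p$, and $\rho_{\bm{u}}$ follows from a Cea-type argument.

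First, I would handle $\rho_{\bm{w}}$. The second and third equations in (\ref{e:elliptic projections}) state that $\bm{w} - \overline{\bm{w}}_h$ is $L^2$-orthogonal to $Q_h$ in divergence and to $B_h$ in normal trace. Since the canonical degrees of freedom of $RT_0$ are precisely the edge-averaged normal fluxes, these conditions identify $\overline{\bm{w}}_h$ with the canonical RT0 interpolant $\Pi^{\text{div}} \bm{w}$. The estimate $\|\rho_{\bm{w}}\| \leq ch\|\bm{w}\|_1$ then follows from the standard approximation property of $\Pi^{\text{div}}$~\cite{F. Brezzi,R. Hiptmair}.

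Second, for $\rho_p$ I would use the fourth equation of (\ref{e:elliptic projections}) as a consistency relation:
\begin{equation*}
(\rho_p, \nabla \cdot \bm{r}_h) + (\rho_\beta, \bm{r}_h \cdot \bm{n}_e)_{\partial\mathcal{T}_h} = (\kappa^{-1} \rho_{\bm{w}}, \bm{r}_h)_h, \quad \forall \bm{r}_h \in \bm{W}_h.
\end{equation*}
Combined with the inf-sup stability of the hybrid RT0-P0-P0 triple, this delivers a bound on $\|\Pi_Q p - \overline{p}_h\|$ (with $\Pi_Q$ the $L^2$ projection onto $Q_h$) and on the multiplier error in terms of $\|\rho_{\bm{w}}\|$. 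The triangle inequality with $\|p - \Pi_Q p\| \leq ch\|p\|_1$ and the already established bound on $\rho_{\bm{w}}$ yields $\|\rho_p\| \leq ch(\|p\|_1 + \|\bm{w}\|_1)$.

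Finally, for $\rho_{\bm{u}}$ I would use the Galerkin-type orthogonality arising from the first equation of (\ref{e:elliptic projections}): $a(\rho_{\bm{u}}, \bm{v}_h) = \alpha(\rho_p, \nabla \cdot \bm{v}_h)$ for every $\bm{v}_h \in \bm{V}_h$. Pick any interpolant $\bm{u}_I \in \bm{V}_h$ of $\bm{u}$, test with $\bm{v}_h = \overline{\bm{u}}_h - \bm{u}_I$, and use the coercivity of $a(\cdot,\cdot)$ together with the bound $\|\nabla \cdot \bm{v}_h\| \leq \zeta^{-1} \|\bm{v}_h\|_a$ already exploited in the proof of Theorem~\ref{t:stab}; this gives $\|\overline{\bm{u}}_h - \bm{u}_I\|_1 \leq c(\|\bm{u} - \bm{u}_I\|_1 + \|\rho_p\|)$. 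A final triangle inequality together with the standard nodal interpolation estimate $\|\bm{u} - \bm{u}_I\|_1 \leq ch\|\bm{u}\|_2$ delivers the claim.

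The main obstacle I expect is the $L^2$ estimate on $\rho_p$: because the hybrid multiplier $\overline{\beta}_h$ enters the consistency relation non-trivially, one must carefully invoke the inf-sup structure of the full hybrid RT0-P0-P0 triple (rather than the unhybridized pair) in order to bound $\rho_p$ and $\rho_\beta$ simultaneously at the correct rate. Once that is in hand, the remaining pieces are standard approximation theory and Cea's lemma.
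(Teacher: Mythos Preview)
Your overall strategy---decouple the projection into the hybrid RT0--P0--P0 Darcy subsystem (equations two through four of~(\ref{e:elliptic projections})) and a perturbed elasticity problem (equation one), then chain the estimates---is exactly what the paper intends: its proof is a one-line reference to Lemma~4.4 of~\cite{C. Rodrigo}, which rests on precisely this hybrid mixed Poisson error analysis.

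One technical slip: equations two and three alone do \emph{not} force $\overline{\bm{w}}_h=\Pi^{\mathrm{div}}\bm{w}$. Because $B_h$ is single-valued on each edge and the pairing $(\cdot,\cdot)_{\partial\mathcal{T}_h}$ sums contributions from both adjacent elements, the third equation only enforces continuity of the normal trace (i.e.\ $\overline{\bm{w}}_h$ lies in the conforming $RT_0$ space), not the individual edge fluxes; together with equation two this fixes $\nabla\cdot\overline{\bm{w}}_h$ but leaves the divergence-free component undetermined. The flux $\overline{\bm{w}}_h$ is only pinned down once you bring in equation four, and in general it is the hybrid mixed solution, not the canonical interpolant. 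The fix is easy and you already have the ingredients: treat equations two through four as the full Galerkin-orthogonality relations for the hybrid mixed Poisson method and invoke its classical error bounds~\cite{F. Brezzi,D.N. Arnold} to obtain $\|\rho_{\bm{w}}\|\le ch\|\bm{w}\|_1$ and $\|\rho_p\|\le ch(\|p\|_1+\|\bm{w}\|_1)$ simultaneously. Your C\'ea argument for $\rho_{\bm{u}}$ then goes through unchanged.
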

\begin{proof}
The proof is same as the proof of Lemma 4.4~\cite{C. Rodrigo}, based on the error analysis of the hybrid mixed formulation of Poisson problems.
\end{proof}
Similarly, we define the elliptic projections, $\overline{\partial _{t}\bm{u}_{h}}$, $\overline{\partial _{t}p_{h}}$ and $\overline{\partial _{t}\bm{w}_{h}}$ of $\overline{\partial} _{t}\bm{u}_{h}$, $\overline{\partial} _{t}p_{h}$ and $\overline{\partial} _{t}\bm{w}_{h}$ respectively. This gives
similar estimates as above for $\partial _{t}\rho_{\bm{u}}$, $\partial _{t}\rho_{p}$ and $\partial _{t}\rho_{\bm{w}}$, where on the right-hand side of the inequalities we use norms of $\overline{\partial} _{t}\bm{u}_{h}$, $\overline{\partial} _{t}p_{h}$ and $\overline{\partial} _{t}\bm{w}_{h}$ instead of the norms of $ \bm{u}_{h}$, $p_{h}$ and $\bm{w}_{h}$ respectively.
Then, we estimate the errors using the following norm,
\begin{equation*}\label{e:tauh norm}
\begin{array}{l}
\|(\bm{u}_{h},p_{h},\bm{w}_{h})\|_{\tau,h}:=\biggl[\displaystyle{\|\bm{u}_{h}\|_{1}^{2}
+\biggl(1+\frac{1}{M}\biggr)\|p_{h}\|^{2}+\tau\|\bm{w}_{h}\|^{2}_{h,\kappa^{-1}}}\biggr]^{1/2},
\end{array}
\end{equation*}
where $\|\bm{w}_{h}\|^{2}_{h,\kappa^{-1}}:=(\kappa^{-1}\bm{w}_{h},\bm{w}_{h})_{h}.$

\begin{thm}\label{thm:error}
Let $\bm{u}$, $p$ and $\bm{w}$ be the solution of (\ref{e:MODEL}), and $\bm{u}_{h}$, $p_{h}$ and $\bm{w}_{h}$ be the solution of (\ref{e:stableHMFE}). If they satisfy the following regularity assumptions,
\begin{equation*}
\begin{array}{l}
\bm{u}\in L^{\infty}((0,T],\bm{H}^{1}_{0}(\Omega))\cap L^{\infty}((0,T],\bm{H}^{2}(\Omega)),\\ \partial_{t}\bm{u}\in L^{1}((0,T],\bm{H}^{2}(\Omega)),\ \partial_{tt}\bm{u}\in L^{1}((0,T],\bm{H}^{1}(\Omega)),\\
p\in L^{\infty}((0,T],H^{1}_{0}(\Omega)),\ \partial_{t}p\in L^{1}((0,T],H^{1}(\Omega)),\ \partial_{tt}p\in L^{1}((0,T],L^{2}(\Omega)),\\
\bm{w}\in L^{\infty}((0,T],H_{0}(\mathrm{div},\Omega))\cap L^{\infty}((0,T],\bm{H}^{1}(\Omega)),\ \partial_{t} \bm{w}\in L^{1}((0,T],\bm{H}^{1}(\Omega)),
\end{array}
\end{equation*}
then,
\begin{equation}\label{e:error}
\begin{array}{l}
\|\bm{u}(t_{n})-\bm{u}_{h}^{n},p(t_{n})-p_{h}^{n},\bm{w}(t_{n})-\bm{w}_{h}^{n}\|_{\tau,h}\\
\leq
c\biggl[\displaystyle{\|e_{\bm{u}}^{0}\|_{1}+\frac{1}{M}\|e_{p}^{0}\|+\tau\int_{0}^{t_{N}}(\|\partial_{tt}\bm{u}\|_{1}
+\frac{1}{M}\|\partial_{tt}p\|)dt}\biggr]\\
\hspace{4mm}+ch\biggl[\displaystyle{\|\bm{u}\|_{2}+(1+\frac{1}{M})^{1/2}\|p\|_{1}+\|\bm{w}\|_{1}}
+\tau^{1/2}\|\bm{w}\|_{1}\\
\hspace{4mm}\displaystyle{+\int_{0}^{t_{N}}(\|\partial_{t}\bm{u}\|_{2}
+(1+\frac{1}{M})\|\partial_{t}p\|_{1}+\frac{1}{M}\|\partial_{t}\bm{w}\|_{1})dt}\biggr].
\end{array}
\end{equation}
\end{thm}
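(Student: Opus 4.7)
The plan is to follow the classical three-step framework of (i) error splitting via elliptic projections, (ii) deriving a discrete-in-time error equation for the projection errors $e_{\bm u}^n, e_p^n, e_\beta^n, e_{\bm w}^n$, and (iii) performing a discrete energy argument that mimics the Biot cancellation at the continuous level. Since Lemma \ref{lem:projection property} already bounds the $\rho$-terms, by the triangle inequality it suffices to control the discrete errors $e^n$ in the norm $\|\cdot\|_{\tau,h}$. Subtracting the discrete scheme (\ref{e:stableHMFE}) from the exact weak formulation at $t=t_n$, inserting the elliptic projections (\ref{e:elliptic projections}), and using their defining orthogonality, the consistency error reduces to two pieces: the time truncation $\partial_t \bm u(t_n) - \bar\partial_t \bm u(t_n)$ (and similarly for $p$), and the projection time derivative $\bar\partial_t \rho_{\bm u}^n, \bar\partial_t \rho_p^n$.

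Next, I would test the resulting four error equations with the carefully scaled choices $\bm v_h = e_{\bm u}^n - e_{\bm u}^{n-1}$, $q_h = \tau e_p^n$, $\rho_h = \tau e_\beta^n$, $\bm r_h = \tau e_{\bm w}^n$, and sum. The $\alpha$-coupling terms $\pm\alpha(e_p^n, \nabla\cdot \bar\partial_t e_{\bm u}^n)\tau$ cancel exactly (this is the standard Biot trick) and the $(\nabla\cdot\bm w, p)$/$(p,\nabla\cdot\bm r)$ pair is eliminated by matching, leaving a telescoping identity of the form
\begin{equation*}
\tfrac{1}{2}\bigl(\|e_{\bm u}^n\|_a^2 - \|e_{\bm u}^{n-1}\|_a^2\bigr) + \tfrac{\tau}{2M}\bigl(\|e_p^n\|^2-\|e_p^{n-1}\|^2\bigr) + \tau\|e_{\bm w}^n\|_{h,\kappa^{-1}}^2 \le \text{(RHS)}_n,
\end{equation*}
where the right-hand side involves $\bar\partial_t \rho$ terms and the time truncation residuals tested against $\bar\partial_t e_{\bm u}^n, e_p^n$, or $e_{\bm w}^n$. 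Summing $n=1,\dots,N$, bounding each residual by Cauchy--Schwarz and Young's inequality, and absorbing into the left side yields the displacement-$\bm H^1$, pressure-$L^2$ (with weight $1/M$), and Darcy-velocity-weighted-$L^2$ contributions in the desired norm.

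The spatial part of the final estimate is then obtained by combining (a) the projection bounds of Lemma \ref{lem:projection property} for the $\rho$-terms, applied pointwise in $t_n$, and (b) the derivative version of the same lemma applied to $\bar\partial_t \rho$, where the identity $\tau \bar\partial_t \rho(t_n) = \int_{t_{n-1}}^{t_n} \partial_t \rho\,dt$ converts the discrete sum $\sum_n \tau \|\bar\partial_t \rho^n\|$ into $\int_0^{t_N}\|\partial_t \rho\|\,dt$, producing the $h\int_0^{t_N}(\|\partial_t\bm u\|_2 + (1+1/M)\|\partial_t p\|_1 + (1/M)\|\partial_t\bm w\|_1)\,dt$ term. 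The time-truncation residuals $\partial_t u(t_n)-\bar\partial_t u(t_n) = \tau^{-1}\int_{t_{n-1}}^{t_n}(t-t_{n-1})\partial_{tt} u\,dt$ are handled analogously, yielding the $\tau\int_0^{t_N}(\|\partial_{tt}\bm u\|_1 + \frac{1}{M}\|\partial_{tt}p\|)\,dt$ contribution. The $\tau^{1/2}h\|\bm w\|_1$ term appears from estimating $\tau\|\rho_{\bm w}^n\|_{h,\kappa^{-1}}^2$ after extracting the $\kappa^{-1}$ scaling.

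The main obstacle I expect is keeping the estimate parameter-robust in the storage coefficient $M$ (and implicitly in $\lambda$ via $\|\cdot\|_a$). Each Young's inequality used to absorb cross terms must carry the correct $M$-weight so that the RHS contributes exactly $(1+1/M)$-weighted norms of $p$ and $1/M$-weighted norms of $\partial_t p, \partial_{tt} p$; an incorrect split would introduce unbounded constants as $M\to\infty$. A secondary technical point is that the test choice $\bm v_h = e_{\bm u}^n - e_{\bm u}^{n-1}$ produces a term $\alpha\tau(\bar\partial_t\rho_p^n,\nabla\cdot\bar\partial_t e_{\bm u}^n)$ that must be controlled by $\|\bar\partial_t e_{\bm u}^n\|_a^2$ with a factor small enough to absorb, requiring the $\zeta^{-1}$ inequality from Theorem \ref{t:stab} together with the fact that $\alpha^2/\zeta^2 \le \delta$. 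Once these book-keeping issues are handled, the final bound (\ref{e:error}) follows by collecting terms and taking square roots.
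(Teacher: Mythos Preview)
Your overall strategy (elliptic projections, error equations, energy test with $\bm v_h=\tau\bar\partial_t e_{\bm u}^n$, $q_h=\tau e_p^n$, $\rho_h=\tau e_\beta^n$, $\bm r_h=\tau e_{\bm w}^n$) matches the paper's, and the cancellations you describe are exactly what happen. However, there is a genuine gap at the absorption step.

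After the cancellations, the only consistency residual that survives sits in the mass equation and is tested against $e_p^n$:
\[
\tau(R^n,e_p^n)=\alpha\tau(\nabla\cdot R_{\bm u}^n,e_p^n)+\tfrac{\tau}{M}(R_p^n,e_p^n),
\qquad R_{\bm u}^n=\partial_t\bm u(t_n)-\bar\partial_t\bar{\bm u}_h^n,\ \text{etc.}
\]
The first summand is bounded by $c\tau\|R_{\bm u}^n\|_1\|e_p^n\|$ and carries \emph{no} $1/M$ weight, while on the left of your telescoping inequality the pressure appears only as $\tfrac{1}{2M}\|e_p^n\|^2$. No Young split fixes this: $c\tau\|R_{\bm u}^n\|_1\|e_p^n\|\le \tfrac{\varepsilon}{M}\|e_p^n\|^2+\tfrac{cM\tau^2}{4\varepsilon}\|R_{\bm u}^n\|_1^2$ necessarily introduces a factor $M$, and the constant blows up as $M\to\infty$. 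You correctly flag $M$-robustness as the main obstacle but do not supply the mechanism that resolves it.

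The paper's remedy is the one ingredient your plan never invokes: the Stokes inf-sup for the bubble-enriched pair $\bm V_h\times Q_h$. Since the first error equation reads $a(e_{\bm u}^n,\bm v_h)=\alpha(e_p^n,\nabla\cdot\bm v_h)$ for all $\bm v_h\in\bm V_h$, the inf-sup $\|q\|\le c\sup_{\bm v_h}(q,\nabla\cdot\bm v_h)/\|\bm v_h\|_a$ gives
\[
\|e_p^n\|\le c\,\|e_{\bm u}^n\|_a.
\]
This converts $c\tau\|R_{\bm u}^n\|_1\|e_p^n\|$ into $c\tau\|R_{\bm u}^n\|_1\|e_{\bm u}^n\|_a$, which is handled by the recursion $X_n\le X_{n-1}+c\tau(\|R_{\bm u}^n\|_1+\tfrac{1}{M}\|R_p^n\|)$ for $X_n=(\|e_{\bm u}^n\|_a^2+\tfrac{1}{M}\|e_p^n\|^2)^{1/2}$, with no $M$ on the right. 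The same bound $\|e_p^n\|\le c\|e_{\bm u}^n\|_a$ is what supplies the unweighted part of $(1+\tfrac{1}{M})\|e_p^n\|^2$ in the final $\|\cdot\|_{\tau,h}$ norm; your energy identity alone only controls $\tfrac{1}{M}\|e_p^n\|^2$.

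Two minor remarks. The ``secondary technical point'' you worry about, a term $\alpha\tau(\bar\partial_t\rho_p^n,\nabla\cdot\bar\partial_t e_{\bm u}^n)$, does not arise: by the first identity in (\ref{e:elliptic projections}) the momentum error equation has zero right-hand side, so testing with $\bm v_h=e_{\bm u}^n-e_{\bm u}^{n-1}$ generates no $\rho$-terms there. And your telescoping coefficient $\tfrac{\tau}{2M}$ should be $\tfrac{1}{2M}$, since $\tfrac{1}{M}(\bar\partial_t e_p^n,\tau e_p^n)=\tfrac{1}{M}(e_p^n-e_p^{n-1},e_p^n)$.
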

\begin{proof}
See Appendix.
\end{proof}

\subsection{Perturbation of the Bilinear Form $a(\cdot, \cdot)$}
In general, using edge/face bubbles leads to a prohibitively large linear system. To resolve this, following~\cite{C. Rodrigo} we introduce
a perturbation of $a(\cdot, \cdot)$, which has a diagonal matrix representation. It is then easy to
eliminate the unknowns corresponding to the bubble functions in $\bm{V}_{b}$.

First, consider a natural decomposition of $\bm{u} \in \bm{V}_{h}$:
\begin{equation*}
\displaystyle{\bm{u}  = \bm{u}^{ l} + \bm{u} ^{b}} ,
\end{equation*}
where $\bm{u}^{ l} \in \bm{V}_{h,1} $ is the linear part and $\bm{u} ^{b} \in \bm{V}_{b}$ is the bubble part. The local bilinear form for
$T \in \mathcal{T}_{h}$, $\bm{u} \in \bm{V}_{h}$ and $\bm{v} \in \bm{V}_{h}$ is,
\begin{equation*}
a_{T}(\bm{u},\bm{v})=2\mu \int_{T}\epsilon(\bm{u}):\epsilon(\bm{v})+\lambda \int_{T}\nabla\cdot\bm{u}\nabla\cdot\bm{v}.
\end{equation*}
On each element $T$, introduce
\begin{equation*}
\displaystyle{d_{b}(\bm{u}, \bm{v})=\sum _{T \in \mathcal{T}_{h}}d_{b,T}(\bm{u}, \bm{v}) = \sum _{T \in \mathcal{T}_{h}}(d + 1)
\sum _{e \in \partial T}u_{e}v_{e}a_{T}(\bm{\Phi}_{e},\bm{\Phi}_{e})}.
\end{equation*}
Then we define a perturbed bilinear form $a^{D}(\cdot, \cdot)$ of $a(\cdot, \cdot)$ as follows,
\begin{equation*}
a^{D}(\bm{u}, \bm{v}) := d_{b}(\bm{u}^{b}, \bm{v}^{b}) + a(\bm{u}^{b}, \bm{v}^{l}) + a(\bm{u}^{l}, \bm{v}^{b}) + a(\bm{u}^{l}, \bm{v}^{l}).
\end{equation*}
As shown in~\cite{C. Rodrigo}, the bilinear forms $a(\cdot, \cdot)$ and $a^{D}(\cdot, \cdot)$ are spectrally equivalent.
\begin{lem} (\cite{C. Rodrigo} Lemma 4.3)\label{equivalence for a}
 The following inequalities hold,
 \[a(\bm{u}, \bm{u}) \leq a^{D}(\bm{u}, \bm{u}) \leq \eta a(\bm{u}, \bm{u}), \forall\ \bm{u} \in \bm{V}_{h},\]
where $\eta$ depends on the shape regularity of the mesh.
\end{lem}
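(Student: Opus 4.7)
The plan is to reduce the inequality to the bubble-bubble part by using the decomposition $\bm{u}=\bm{u}^l+\bm{u}^b$. Observe that the linear-linear and cross terms coincide in $a(\cdot,\cdot)$ and $a^D(\cdot,\cdot)$, so
\[
a^D(\bm{u},\bm{u})-a(\bm{u},\bm{u}) \;=\; d_b(\bm{u}^b,\bm{u}^b)-a(\bm{u}^b,\bm{u}^b).
\]
Hence both directions of the desired equivalence amount to proving that on each element $T$ the two quadratic forms $a_T(\bm{u}^b,\bm{u}^b)$ and $d_{b,T}(\bm{u}^b,\bm{u}^b)=(d+1)\sum_{e\in\partial T}u_e^2\,a_T(\bm{\Phi}_e,\bm{\Phi}_e)$ are equivalent with shape-regularity-dependent constants.

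For the lower bound $a(\bm{u},\bm{u})\le a^D(\bm{u},\bm{u})$, which is equivalent to $a_T(\bm{u}^b,\bm{u}^b)\le d_{b,T}(\bm{u}^b,\bm{u}^b)$ on each $T$, I would apply the triangle inequality to the energy seminorm induced by $a_T$, followed by the discrete Cauchy--Schwarz inequality on a sum of at most $d+1$ terms:
\[
a_T(\bm{u}^b,\bm{u}^b)^{1/2} \;\le\; \sum_{e\in\partial T}|u_e|\,a_T(\bm{\Phi}_e,\bm{\Phi}_e)^{1/2},
\]
and squaring gives exactly the factor $(d+1)$ appearing in the definition of $d_b$. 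This direction works without any assumption on the mesh beyond $T$ having $d+1$ faces.

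For the upper bound $a^D(\bm{u},\bm{u})\le \eta\, a(\bm{u},\bm{u})$, the reduction above leaves us with the finite-dimensional inequality $d_{b,T}(\bm{u}^b,\bm{u}^b)\le \eta\, a_T(\bm{u}^b,\bm{u}^b)$. I would handle this by a standard scaling argument: pull back to a reference simplex $\hat T$ via an affine map $F_T$, note that both $a_T$ and $d_{b,T}$ scale by the same power of $h_T$ (they both come from gradients integrated over $T$), and on $\hat T$ the claim is simply that the symmetric positive definite $(d+1)\times(d+1)$ local matrix $(a_{\hat T}(\bm{\Phi}_e,\bm{\Phi}_{e'}))$ dominates (up to a constant) its scaled diagonal. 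This is a finite-dimensional linear-algebra fact on a single simplex; the constant depends on $\lambda,\mu$ and on the shape-regularity constant through the condition number of the Jacobian $DF_T$, which is exactly the dependence claimed.

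The main obstacle is the upper bound, more specifically tracking the dependence of $\eta$ through the affine pull-back. The Lam\'e coefficients $\mu$ and $\lambda$ enter both $a_T$ and $d_{b,T}$ in the same way, so they cancel and do \emph{not} appear in $\eta$; what remains is to argue, via shape regularity, that the mapping from $\hat T$ to $T$ introduces only uniformly bounded distortions of the local bubble matrix. Since this is precisely the content of Lemma~4.3 in~\cite{C. Rodrigo}, I would invoke that result directly rather than redo the scaling calculation, and only sketch the reduction above.
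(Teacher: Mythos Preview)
The paper does not give its own proof of this lemma; it simply cites Lemma~4.3 of \cite{C. Rodrigo}, which is also where your sketch ultimately lands. Your lower-bound argument (triangle inequality in the $a_T$-seminorm followed by Cauchy--Schwarz on $d+1$ terms) is clean and correct, and yields exactly the factor $(d+1)$ in the definition of $d_b$.

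Your reduction for the upper bound, however, has a gap. From
\[
a^D(\bm{u},\bm{u})-a(\bm{u},\bm{u})=d_b(\bm{u}^b,\bm{u}^b)-a(\bm{u}^b,\bm{u}^b)
\]
the inequality $a^D(\bm{u},\bm{u})\le\eta\,a(\bm{u},\bm{u})$ is equivalent to
\[
d_b(\bm{u}^b,\bm{u}^b)-a(\bm{u}^b,\bm{u}^b)\le(\eta-1)\,a(\bm{u},\bm{u}),
\]
with $a(\bm{u},\bm{u})$ on the right, \emph{not} $a(\bm{u}^b,\bm{u}^b)$. Establishing $d_{b,T}(\bm{u}^b,\bm{u}^b)\le C\,a_T(\bm{u}^b,\bm{u}^b)$ is therefore not sufficient by itself: you also need the stability of the splitting, namely $a_T(\bm{u}^b,\bm{u}^b)\le C_0\,a_T(\bm{u},\bm{u})$ with $C_0$ depending only on shape regularity. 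This is not automatic, since the linear and bubble parts can partially cancel in the $a_T$-seminorm. The missing estimate does follow from the same kind of scaling argument you describe (the bubble projection along the linear part is a bounded map on each element, uniformly for shape-regular meshes), but it must be stated and used. A cleaner route is to note that on each $T$ the forms $a_T$ and $a_T^D$ are positive semidefinite on the local space with the \emph{same} kernel (the rigid motions, which lie entirely in $[P_1(T)]^d$ so that $\bm{u}^b=0$ there), and hence are equivalent with constants depending only on shape regularity.
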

Now, we consider the variational problem with diagonal bubble terms: Find $(\bm{u}_{h}^{n},p_{h}^{n},\beta_{h}^{n},\bm{w}_{h}^{n})\in \bm{V}_{h}\times Q_{h}\times B_{h}\times
\bm{W}_{h} $, such that,
\begin{equation}\label{full system eqns}
\begin{array}{l}
a^{D}(\bm{u}_{h}^{n},\bm{v}_{h})-\alpha(p_{h}^{n},\nabla \cdot \bm{v}_{h})
=(\bm{f},\bm{v}_{h}),\ \ \forall \ \bm{v}_{h} \in \bm{V}_{h},\\
\displaystyle{\alpha(\nabla \cdot\overline{\partial} _{t}\bm{u}_{h}^{n},q_{h})+\frac{1}{M}(\overline{\partial} _{t}p_{h}^{n},q_{h}) +(\nabla \cdot
\bm{w}_{h}^{n},q_{h}) = (g,q_{h})}, \ \ \forall \ q_{h} \in Q_{h},\\
 (\bm{w}_{h}^{n}\cdot \bm{n}_{e},\rho_{h})_{\partial\mathcal{T}_{h}}=0, \ \ \forall \ \rho_{h} \in B_{h},\\
-(p_{h}^{n},\nabla \cdot \bm{r}_{h})-( \beta_{h}^{n},\bm{r}_{h}\cdot \bm{n}_{e})_{\partial\mathcal{T}_{h}}+(\kappa^{-1}\bm{w}_{h}^{n},\bm{r}_{h})_{h}=0, \ \ \forall \ \bm{r}_{h} \in \bm{W}_{h}.
\end{array}
\end{equation}
To show (\ref{full system eqns}) is well-posed, we define a bilinear form with $a^{D}(\cdot,\cdot)$ replacing $a(\cdot,\cdot)$ in $\mathcal{B}(\cdot,\cdot)$,
\begin{equation*}
\begin{array}{l}
\mathcal{B}^{D}((\bm{u}_{h}^{n},p_{h}^{n},\beta_{h}^{n},\bm{w}_{h}^{n}),(\bm{v}_{h},q_{h},\rho_{h},\bm{r}_{h}))\\
:=a^{D}(\bm{u}_{h}^{n},\bm{v}_{h})-\alpha(p_{h}^{n},\nabla \cdot \bm{v}_{h})
\displaystyle{+\alpha(\nabla \cdot \overline{\partial} _{t} \bm{u}_{h}^{n},q_{h})+\frac{1}{M}(\overline{\partial} _{t}p_{h}^{n},q_{h})+(\nabla \cdot\bm{w}_{h}^{n}, q_{h})}\\
\hspace{4mm}+(\bm{w}_{h}^{n}\cdot \bm{n}_{e},\rho_{h})_{\partial\mathcal{T}_{h}}
-( p_{h}^{n},\nabla \cdot\bm{r}_{h})
-(\beta_{h}^{n},\bm{r}_{h}\cdot \bm{n}_{e})_{\partial\mathcal{T}_{h}}+(\kappa^{-1}\bm{w}_{h}^{n},\bm{r}_{h})_{h}.
\end{array}
\end{equation*}
Similarly, we have the following theorem,
\begin{thm} \label{t:stabaD}
 If the finite element spaces $Q_{h}, B_{h}, \bm{W}_{h}$  are chosen as (\ref{spacesQBW}), and $\bm{V}_{h}$ as (\ref{Vh}), the following \emph{inf-sup} condition holds
 \begin{equation*}\label{wellposednessD}
\begin{array}{l}
 \displaystyle{\sup_{(\bm{v}_{h},q_{h},\rho_{h},\bm{r}_{h})\in S_{h}}}
 \frac{\mathcal{B}^{D}((\bm{u}_{h}^{n},p_{h}^{n},\beta_{h}^{n},\bm{w}_{h}^{n}),(\bm{v}_{h},q_{h},\rho_{h},\bm{r}_{h})) }{\vertiii{(\bm{v}_{h},q_{h},\rho_{h},\bm{r}_{h})}}
\geq  \gamma_{\ast\ast}\vertiii{(\bm{u}_{h},p_{h},\beta_{h},\bm{w}_{h})}, \ \
\end{array}
\end{equation*}
and $\mathcal{B}^{D}(\cdot , \cdot)$ is continuous with respect to $\vertiii{\cdot}$, i.e.,
\begin{equation*}\label{continuousaD}
\begin{array}{l}
 |\mathcal{B}^{D}((\bm{u}_{h}^{n},p_{h}^{n},\beta_{h}^{n},\bm{w}_{h}^{n}),(\bm{v}_{h},q_{h},\rho_{h},\bm{r}_{h}))|\leq \gamma^{\ast\ast}
\vertiii{(\bm{u}_{h}^{n},p_{h}^{n},\beta_{h}^{n},\bm{w}_{h}^{n})}\vertiii{(\bm{v}_{h},q_{h},\rho_{h},\bm{r}_{h})}. \ \
\end{array}
\end{equation*}
Here, the constants $\gamma_{\ast\ast}$, $ \gamma^{\ast\ast}> 0$ are independent of mesh size $h$, time step size $\tau$, and the physical parameters.
\end{thm}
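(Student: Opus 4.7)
The plan is to mirror the proof of Theorem~\ref{t:stab} verbatim, using Lemma~\ref{equivalence for a} as the sole new ingredient to pass from $a(\cdot,\cdot)$ to $a^{D}(\cdot,\cdot)$. The key observation is that $a^{D}$ is symmetric by construction and, by the lower bound $a(\bm{u},\bm{u})\le a^{D}(\bm{u},\bm{u})$, positive definite on $\bm{V}_{h}$. Therefore $a^{D}$ is itself an inner product and a Cauchy--Schwarz inequality holds for it, which combined with the upper bound in Lemma~\ref{equivalence for a} yields $a^{D}(\bm{u},\bm{v}) \le \eta\|\bm{u}\|_{a}\|\bm{v}\|_{a}$. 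These are exactly the two properties of $a$ that drive the arguments in Theorem~\ref{t:stab}, so no structural changes are required.

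For the inf-sup bound I would take the same test functions $\bm{v}_{h}=\bm{u}_{h}-\theta\bm{h}$, $q_{h}=p_{h}+\theta_{1}\tau\nabla\cdot\bm{w}_{h}$, $\rho_{h}=\beta_{h}+\theta_{2}\tau h\,\bm{w}_{h}\cdot\bm{n}_{e}$, $\bm{r}_{h}=\bm{w}_{h}-\theta_{3}\bm{z}_{h}$, where $\bm{h}\in\bm{V}_{h}$ is the Girault--Rodrigo function satisfying $(p_{h},\nabla\cdot\bm{h})\ge(\gamma_{B}/\zeta)\|p_{h}\|^{2}$ and $\|\bm{h}\|_{a}=\|p_{h}\|$, whose existence relies only on the bubble enrichment of $\bm{V}_{h}$ and hence still applies here. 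The diagonal contribution is handled by $a^{D}(\bm{u}_{h},\bm{u}_{h})\ge\|\bm{u}_{h}\|_{a}^{2}$ from Lemma~\ref{equivalence for a}, while the cross term becomes $-\theta\,a^{D}(\bm{u}_{h},\bm{h})\ge -\tfrac{1}{2}\|\bm{u}_{h}\|_{a}^{2}-\tfrac{\theta^{2}\eta^{2}}{2}\|\bm{h}\|_{a}^{2}$ after applying Cauchy--Schwarz for $a^{D}$ and then a weighted Young's inequality. All the remaining terms, including those involving $\bm{z}_{h}$ from the auxiliary scalar problem and the bound \eqref{zh}, are untouched. The only necessary adjustment is to rescale the parameter $\theta$ to absorb the extra $\eta^{2}$ now multiplying $\|\bm{h}\|_{a}^{2}=\|p_{h}\|^{2}$: replacing $\theta=\alpha\gamma_{B}/(2\zeta)$ by $\theta=\alpha\gamma_{B}/(c_{\eta}\zeta)$ for a sufficiently large $c_{\eta}=c_{\eta}(\eta)$ keeps the coefficient of $\|p_{h}\|^{2}$ strictly positive, while $\theta_{1},\theta_{2},\theta_{3}$ can be retained unchanged. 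Then the same chain of inequalities produces a lower bound of the form $\gamma_{1}\vertiii{(\bm{u}_{h},p_{h},\beta_{h},\bm{w}_{h})}^{2}$ and, after bounding $\vertiii{(\bm{v}_{h},q_{h},\rho_{h},\bm{r}_{h})}$ above as in Theorem~\ref{t:stab}, gives $\gamma_{\ast\ast}=\gamma_{1}/\gamma_{2}$ with both depending only on shape regularity (through $\eta$) and on $\gamma_{B}$.

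Continuity is equally immediate. Every bilinear pairing appearing in $\mathcal{B}^{D}$ other than $a^{D}$ coincides with its counterpart in $\mathcal{B}$, so the Cauchy--Schwarz bounds from Theorem~\ref{t:stab} carry over directly. The $a^{D}$ term itself is bounded by $a^{D}(\bm{u}_{h},\bm{v}_{h})\le\eta\|\bm{u}_{h}\|_{a}\|\bm{v}_{h}\|_{a}$, yielding a continuity constant $\gamma^{\ast\ast}$ that differs from $\gamma^{\ast}$ only by a factor depending on $\eta$.

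Since $\eta$ depends only on the shape regularity of the mesh, the resulting $\gamma_{\ast\ast}$ and $\gamma^{\ast\ast}$ are independent of $h$, $\tau$, and the physical parameters $\lambda,\mu,\alpha,M,\kappa$, as claimed. I do not anticipate any substantive obstacle: the proof is essentially a constant-tracking exercise, and the only subtle point is choosing the Young's-inequality weight in the cross term $-\theta\,a^{D}(\bm{u}_{h},\bm{h})$ so that the positive contribution $a^{D}(\bm{u}_{h},\bm{u}_{h})\ge\|\bm{u}_{h}\|_{a}^{2}$ is not overwhelmed by the $\eta$-inflated cross product, which is achieved by the rescaling of $\theta$ described above.
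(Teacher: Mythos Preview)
Your proposal is correct and follows exactly the route the paper indicates: the paper's proof consists of the single sentence ``Based on the equivalence between $a^{D}(\cdot,\cdot)$ and $a(\cdot,\cdot)$, the proof is similar to that of Theorem~\ref{t:stab},'' and you have simply spelled out the details of that similarity, using Lemma~\ref{equivalence for a} to handle the $a^{D}$ diagonal and cross terms while leaving the rest of the argument from Theorem~\ref{t:stab} intact.
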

\begin{proof}
Based on the equivalence between $a^{D}(\cdot,\cdot)$ and $a(\cdot,\cdot)$, the proof is similar to that of Theorem \ref{t:stab}.
\end{proof}

\section{Elimination of Bubbles and Darcy's Velocity}\label{sec:elimination}
 The perturbation of the bilinear form $a(\cdot,\cdot)$ allows for local elimination of the bubble functions. On the other hand, the normal component of Darcy's velocity is discontinuous across the interior edges, so the mass matrix $M_{\bm{w}}$ corresponding to Darcy's velocity is block diagonal. Therefore, both the bubble unknowns and Darcy's velocity can be eliminated by static condensation. In this section, we discuss such an elimination and the well-posedness of the resulting eliminated linear system.

\subsection{Elimination}
 The variational form (\ref{full system eqns}) can be represented in block matrix form,
\begin{equation*}\label{AD}
\begin{array}{l}
\mathcal{A}^{D}\left(
\begin{array}{ccccc}
 \bm{U}_{b} \\
 \bm{U}_{l}  \\
 P  \\
 B  \\
 \bm{W} \\
\end{array}
\right)=\bm{b},\ \
\mathcal{A}^{D}=\left(
\begin{array}{ccccc}
 D_{bb}&A_{bl} & \alpha B_{b}^{\top}& 0  &0\\
 A_{bl}^{\top}& A_{ll}&\alpha B_{l}^{\top} &0  &0\\
 -\alpha B_{b}&-\alpha B_{l} &\frac{1}{M} M_{p} &0 &-\tau B_{\bm{w}}\\
 0 &0 &0 &0&-\tau B_{\beta} \\
 0 &0& \tau B_{\bm{w}}^{\top}&\tau B_{\beta}^{\top}&\tau M_{\bm{w}}   \\
\end{array}
\right),
  \end{array}
\end{equation*}
where $ \bm{U}_{b}$, $\bm{U}_{l}$, $P$, $B$ and $\bm{W}$ are the unknown vectors for the bubble component of the displacement, the piecewise
linear component of the displacement, the pressure, the Lagrange multiplier, and Darcy's velocity, respectively. The blocks in $\mathcal{A}^{D}$ correspond to the following bilinear forms:
\[
a^{D}(\bm{u}_{h}^{ b}, \bm{v}_{h}^{b}) \rightarrow D_{bb},\  a(\bm{v}_{h}^{l},\bm{u}_{h}^{b} ) \rightarrow A_{bl},\ a(\bm{u}_{h}^{ l},\bm{v}_{h}^{l}) \rightarrow A_{ll},\ -( \nabla\cdot \bm{u}_{h}^{b},q_{h}) \rightarrow B_{b},\]
\[ -( \nabla\cdot \bm{u}_{h}^{l},q_{h}) \rightarrow B_{l},\ -(\nabla\cdot \bm{w}_{h}, q_{h}) \rightarrow B_{\bm{w}},\ -( \bm{w}_{h}\cdot\bm{n}_{e},\rho_{h} )_{\partial\mathcal{T}_{h}} \rightarrow B_{\beta},\]
 \[ (p_{h}, q_{h}) \rightarrow M_{p},\ (\kappa^{-1}\bm{w}_{h}, \bm{r}_{h})_{h} \rightarrow M_{\bm{w}}
.\]

After eliminating the unknowns corresponding to the bubbles and Darcy's velocity from $\mathcal{A}^{D}$, we arrive at a smaller size matrix as follows,
 \begin{equation}\label{eliminated matrix AE}
\begin{array}{l}
\mathcal{A}^{E}=\left(
\begin{array}{ccccc}
A_{\bm{u}}^{E}&\alpha(B_{\bm{u}}^{E})^{\top}&0\\
-\alpha B_{\bm{u}}^{E}&B_{p}^{E} &\tau B_{\bm{w}}M_{\bm{w}}^{-1}B_{\beta}^{\top} \\
0&\tau B_{\beta}M_{\bm{w}}^{-1}B_{\bm{w}}^{\top}&\tau B_{\beta}M_{\bm{w}}^{-1}B_{\beta}^{\top}\\
\end{array}
\right),
  \end{array}
\end{equation}
where $ \displaystyle{B_{p}^{E}=\frac{1}{M} M_{p}+ \alpha ^{2}B_{b}D_{bb}^{-1}B_{b}^{\top}
+\tau B_{\bm{w}}M_{\bm{w}}^{-1}B_{\bm{w}}^{\top}}$, $A_{\bm{u}}^{E}=A_{ll}- A_{bl}^{\top}D_{bb}^{-1}A_{bl}$, $B_{\bm{u}}^{E}=
B_{l}- B_{b}D_{bb}^{-1}A_{bl}$.  Note that, the size of $\mathcal{A}^{E}$ is same as the classical P1-RT0-P0 discretization.

\subsection{Well-posedness of the Eliminated System}

We have shown the well-posedness of system (\ref{e:stableHMFE}), and the well-posedness of (\ref{eliminated matrix AE}) follows directly since it is obtained
by static condensation. However, for the purpose of developing preconditioners for the linear system
$\mathcal{A}^{E}$, we show the well-posedness of (\ref{eliminated matrix AE}) explicitly with proper chosen weighted norms.

In order to show that the system (\ref{eliminated matrix AE}) is well-posed, we group the multiplier and the pressure together because the multiplier
is considered to be the trace of the pressure on the element boundaries. $\mathcal{A}^{E}$ can be rewritten as in the following two-by-two block form,
\begin{equation}\label{eliminated}
\begin{array}{l}
\mathcal{A}^{E}=\left(
\begin{array}{ccccc}
A_{\bm{u}}^{E}&\alpha(B_{\bm{u},p\beta}^{E})^{\top}\\
-\alpha B_{\bm{u},p\beta}^{E}&B_{p\beta}^{E}\\
\end{array}
\right),
\end{array}
\end{equation}
where
 \begin{equation*}
\begin{array}{l}
B_{p\beta}^{E}=\left(
\begin{array}{ccccc}
 B_{p}^{E}&\tau B_{\bm{w}}M_{\bm{w}}^{-1}B_{\beta}^{\top} \\
\tau B_{\beta}M_{\bm{w}}^{-1}B_{\bm{w}}^{\top}&\tau B_{\beta}M_{\bm{w}}^{-1}B_{\beta}^{\top}\\
\end{array}
\right),\ \
B_{\bm{u},p\beta}^{E}=\left(
\begin{array}{ccccc}
 B_{\bm{u}}^{E}\\
0\\
\end{array}
\right).
  \end{array}
\end{equation*}

In order to present the well-posedness theorem, we first give some useful lemmas.
\begin{lem}\label{SPD}
 If the finite element spaces $Q_{h}, B_{h}, \bm{W}_{h}$  are chosen as (\ref{spacesQBW}), and $\bm{V}_{h}$ as (\ref{Vh}), the matrix $B_{p\beta}^{E}$ is SPD.
\end{lem}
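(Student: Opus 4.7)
The plan is to verify the two defining properties of SPD---symmetry and strict positivity of the associated quadratic form---directly from the block structure of $B_{p\beta}^{E}$, exploiting that $M_{p}$, $D_{bb}$, and $M_{\bm{w}}$ are all SPD. (The SPD property of $D_{bb}$ is diagonal and local by construction and follows also from Lemma~\ref{equivalence for a}; $M_{\bm{w}}$ is block diagonal element-by-element, each block being a local $RT_{0}$ mass matrix.)

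\textbf{Symmetry.} I first observe by inspection that each summand of the $(1,1)$-block of $B_{p\beta}^{E}$, namely $\tfrac{1}{M}M_{p}$, $\alpha^{2} B_{b} D_{bb}^{-1} B_{b}^{\top}$, and $\tau B_{\bm{w}} M_{\bm{w}}^{-1} B_{\bm{w}}^{\top}$, is symmetric, while the off-diagonal $(1,2)$ and $(2,1)$ blocks are transposes of one another by construction. Hence $B_{p\beta}^{E} = (B_{p\beta}^{E})^{\top}$.

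\textbf{Positivity of the quadratic form.} For arbitrary $(P,B)$, expanding and completing the square on the $M_{\bm{w}}^{-1}$-weighted contributions yields
\begin{equation*}
\begin{pmatrix} P \\ B \end{pmatrix}^{\top} B_{p\beta}^{E} \begin{pmatrix} P \\ B \end{pmatrix}
= \frac{1}{M}\, P^{\top} M_{p} P + \alpha^{2}\, (B_{b}^{\top}P)^{\top} D_{bb}^{-1} (B_{b}^{\top}P) + \tau\, (B_{\bm{w}}^{\top}P + B_{\beta}^{\top}B)^{\top} M_{\bm{w}}^{-1} (B_{\bm{w}}^{\top}P + B_{\beta}^{\top}B),
\end{equation*}
which is manifestly nonnegative. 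Assuming the right-hand side vanishes, when $M<\infty$ the first term forces $P=0$ since $M_{p}$ is SPD; the third term then reduces to $(B_{\beta}^{\top}B)^{\top} M_{\bm{w}}^{-1} (B_{\beta}^{\top}B) = 0$, so $B_{\beta}^{\top}B = 0$.

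\textbf{Injectivity of $B_{\beta}^{\top}$ and the main obstacle.} The remaining---and most subtle---step is to conclude $B_{\beta}^{\top}B=0 \Rightarrow B=0$. Here I exploit the hybrid structure: the local $RT_{0}$ functions in $\bm{W}_{h}$ are \emph{discontinuous} across element interfaces, so for a nonzero $\rho_{h}\in B_{h}$ supported on a face $e$ where $\bm{W}_{h}$ admits free normal trace, I can pick $\bm{r}_{h}\in\bm{W}_{h}$ supported in a single element adjacent to $e$ with normal trace equal to $\rho_{h}|_{e}$ on $e$ and zero on the other faces of that element, giving $(\bm{r}_{h}\cdot\bm{n}_{e},\rho_{h})_{\partial\mathcal{T}_{h}} = |e|\,(\rho_{h}|_{e})^{2} \neq 0$, a contradiction. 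The delicate bookkeeping is to respect the essential condition $\bm{r}_{h}\cdot\bm{n}=0$ on $\partial\mathcal{T}_{B}$: on those faces the multiplier decouples from the system and may be taken to be zero, so $B_{h}$ is effectively supported on the faces where $\bm{W}_{h}$ has free normal trace (interior faces and $\Gamma_{t}$-faces), and on those the argument above applies. (In the incompressible limit $1/M=0$ one would supplement this by invoking injectivity of $B_{b}^{\top}$: integration by parts shows that a bubble's divergence paired with piecewise constant $p_{h}$ produces $(p_{h}|_{T^{+}}-p_{h}|_{T^{-}})\int_{e}\varphi_{e}$ on interior faces and $\pm p_{h}|_{T}\int_{e}\varphi_{e}$ on $\Gamma_{t}$-faces, which together with connectedness of $\Omega$ and nonemptiness of $\Gamma_{t}$ forces $P=0$, reducing to the previous step.)
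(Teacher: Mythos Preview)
Your proof is correct and takes a genuinely different route from the paper's. The paper bounds the quadratic form from below via Cauchy--Schwarz on the cross term $2\tau(M_{\bm{w}}^{-1}B_{\beta}^{\top}\beta,B_{\bm{w}}^{\top}p)$, casts the resulting lower bound as $v^{\top}\mathcal{M}v$ for an explicit $3\times 3$ positive semidefinite matrix $\mathcal{M}$ with one-dimensional kernel $\operatorname{span}\{(0,1,1)^{\top}\}$, and then argues that the 3-vector of norms cannot fall into that kernel nontrivially because vanishing of the first component forces $p=0$ and hence vanishing of the second. You instead complete the square exactly, obtaining the \emph{equality}
\[
\frac{1}{M}\,P^{\top}M_{p}P+\alpha^{2}(B_{b}^{\top}P)^{\top}D_{bb}^{-1}(B_{b}^{\top}P)+\tau\,(B_{\bm{w}}^{\top}P+B_{\beta}^{\top}B)^{\top}M_{\bm{w}}^{-1}(B_{\bm{w}}^{\top}P+B_{\beta}^{\top}B),
\]
which is more elementary (no inequality, no auxiliary matrix) and makes the positive semidefiniteness immediate. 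Both arguments ultimately reduce strict positivity to the implication $B_{\beta}^{\top}B=0\Rightarrow B=0$; the paper leaves this implicit, whereas you supply it explicitly via the local $RT_{0}$ degrees of freedom in the discontinuous space $\bm{W}_{h}$ and flag the boundary-face bookkeeping needed to make it go through. Your parenthetical treatment of the limit $1/M=0$ via injectivity of $B_{b}^{\top}$ is a useful extension beyond what the paper claims.
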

\begin{proof}
For any $\bm{x}=(p,\beta)^{\top}\in Q_{h}\times B_{h}$,
\begin{equation*}
\begin{array}{l}
(B_{p\beta}^{E}\bm{x},\bm{x})\\
=\|p\|^{2}_{B_{p}^{E}}+2\tau(B_{\bm{w}}M_{\bm{w}}^{-1}B_{\beta}^{\top}\beta,p)+\tau\|B_{\beta}^{\top}\beta\|^{2}_{M_{\bm{w}}^{-1}}\\
\geq  \left(
\begin{array}{ccccc}
[\frac{1}{M} \|p\|^{2}_{M_{p}}+\alpha ^{2}\|B_{b}^{\top}p\|^{2}_{D_{bb}^{-1}}]^{1/2}\\
\sqrt{\tau}\|B_{\bm{w}}^{\top}p\|_{M_{\bm{w}}^{-1}}\\
\sqrt{\tau}\|B_{\beta}^{\top}\beta\|_{M_{\bm{w}}^{-1}}\\
\end{array}
\right) ^{\top}
\mathcal{M}
\left(
\begin{array}{ccccc}
[\frac{1}{M} \|p\|^{2}_{M_{p}}+\alpha ^{2}\|B_{b}^{\top}p\|^{2}_{D_{bb}^{-1}}]^{1/2}\\
\sqrt{\tau}\|B_{\bm{w}}^{\top}p\|_{M_{\bm{w}}^{-1}}\\
\sqrt{\tau}\|B_{\beta}^{\top}\beta\|_{M_{\bm{w}}^{-1}}\\
\end{array}
\right),
\end{array}
\end{equation*}
where
\begin{equation*}
\begin{array}{l}\mathcal{M}=\left(
\begin{array}{ccccc}
1&0&0\\
0&1&-1\\
0&-1&1\\
\end{array}
\right).
\end{array}
\end{equation*}
The matrix $\mathcal{M}$ is symmetric positive semi-definite. The eigenvector corresponding to zero eigenvalue is $(0,1,1)^{\top}$, but the vector $([\frac{1}{M} \|p\|^{2}_{M_{p}}+\alpha ^{2}\|B_{b}^{\top}p\|^{2}_{D_{bb}^{-1}}]^{1/2}$,
$\sqrt{\tau}\|B_{\bm{w}}^{\top}p\|_{M_{\bm{w}}^{-1}},
\sqrt{\tau}\|B_{\beta}^{\top}\beta\|_{M_{\bm{w}}^{-1}})^{\top}$ can not be a multiple of $(0,1,1)^{\top}$, because when $\frac{1}{M} \|p\|^{2}_{M_{p}}
+\alpha ^{2}\|B_{b}^{\top}p\|^{2}_{D_{bb}^{-1}}=0$, we have $p=0$, which implies $\|B_{\bm{w}}^{\top}p\|_{M_{\bm{w}}^{-1}}=0$. Therefore, $B_{p\beta}^{E}$ is SPD.
\end{proof}
\begin{corol}
 If the finite element spaces $Q_{h}, B_{h}, \bm{W}_{h}$  are chosen as (\ref{spacesQBW}), and $\bm{V}_{h}$ as (\ref{Vh}), the matrix $A_{p\beta}^{E}$ is SPD, where
\begin{equation*}
\begin{array}{l}
A_{p\beta}^{E}=\left(
\begin{array}{ccccc}
 A_{p}^{E}&\tau B_{\bm{w}}M_{\bm{w}}^{-1}B_{\beta}^{\top} \\
\tau B_{\beta}M_{\bm{w}}^{-1}B_{\bm{w}}^{\top}&\tau B_{\beta}M_{\bm{w}}^{-1}B_{\beta}^{\top}\\
\end{array}
\right),
  \end{array}
\end{equation*}
and $\displaystyle{A_{p}^{E}=B_{p}^{E}+\frac{\alpha^{2}}{\zeta^{2}} M_{p}}$.
\end{corol}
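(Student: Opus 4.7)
The plan is to observe that $A_{p\beta}^E$ is obtained from $B_{p\beta}^E$ by adding a very simple symmetric positive semi-definite block, and then leverage Lemma~\ref{SPD} directly. Concretely, from the definitions of $A_p^E$ and $B_p^E$ we have
\begin{equation*}
A_{p\beta}^{E} = B_{p\beta}^{E} + \frac{\alpha^{2}}{\zeta^{2}}
\begin{pmatrix} M_{p} & 0 \\ 0 & 0 \end{pmatrix}.
\end{equation*}
Thus the corollary reduces to showing that the right-hand side is SPD.

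First, I would note that the perturbation matrix on the right is symmetric positive semi-definite: the pressure mass matrix $M_{p}$ is SPD (being a Gram matrix of the $P_{0}$ basis with respect to the $L^{2}$ inner product), and padding it with a zero block in the $(\beta,\beta)$ slot preserves symmetry and positive semi-definiteness. Second, Lemma~\ref{SPD} already gives that $B_{p\beta}^{E}$ is SPD. Since the sum of an SPD matrix and a PSD matrix is SPD (for any nonzero $(p,\beta)^{\top}$ we still have $(B_{p\beta}^{E}(p,\beta)^{\top},(p,\beta)^{\top}) > 0$, and the perturbation only adds a nonnegative contribution), symmetry and strict positive definiteness of $A_{p\beta}^{E}$ follow immediately.

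There is really no obstacle here: the argument is a one-line observation that the corollary is a positive-semidefinite perturbation of Lemma~\ref{SPD}. If one wished to avoid invoking Lemma~\ref{SPD} as a black box, one could instead mimic its proof verbatim with $A_{p}^{E}$ in place of $B_{p}^{E}$, simply picking up an additional nonnegative term $\frac{\alpha^{2}}{\zeta^{2}}\|p\|_{M_{p}}^{2}$ in the first slot of the vector $\bigl([\tfrac{1}{M}\|p\|_{M_{p}}^{2}+\alpha^{2}\|B_{b}^{\top}p\|_{D_{bb}^{-1}}^{2}]^{1/2},\sqrt{\tau}\|B_{\bm{w}}^{\top}p\|_{M_{\bm{w}}^{-1}},\sqrt{\tau}\|B_{\beta}^{\top}\beta\|_{M_{\bm{w}}^{-1}}\bigr)^{\top}$, and then reusing the same auxiliary $3\times 3$ matrix $\mathcal{M}$. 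Either route gives the result in a few lines.
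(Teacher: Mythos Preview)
Your argument is correct. The paper's own proof simply says ``The proof is similar to that of Lemma~\ref{SPD},'' i.e.\ it invites the reader to rerun the $3\times 3$ matrix $\mathcal{M}$ argument with the extra term $\tfrac{\alpha^{2}}{\zeta^{2}}\|p\|_{M_{p}}^{2}$ absorbed into the first slot---exactly the alternative you sketch at the end. Your primary route is slightly different and cleaner: you write $A_{p\beta}^{E}=B_{p\beta}^{E}+\tfrac{\alpha^{2}}{\zeta^{2}}\operatorname{diag}(M_{p},0)$ and invoke Lemma~\ref{SPD} as a black box together with the trivial fact that SPD\,+\,PSD\,=\,SPD. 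This avoids repeating the $\mathcal{M}$-matrix bookkeeping entirely, at the cost of making the dependence on Lemma~\ref{SPD} explicit rather than self-contained. Either way the result is immediate.
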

\begin{proof}
The proof is similar to that of Lemma \ref{SPD}.
\end{proof}
Now, we introduce the weighted norm
\begin{equation}\label{norm eliminated}
\begin{array}{l}
\displaystyle{\|(\bm{u}_{h},p_{h},\beta_{h})\|_{D^{E}}
:=\biggl[\|\bm{u}_{h}\|_{A_{\bm{u}}^{E}}^{2}+\|(p_{h},\beta_{h})\|_{A_{p\beta}^{E}}^{2}\biggr]^{1/2}}.\\
\end{array}
\end{equation}
The following lemma is useful for proving of the well-posedness of the system (\ref{eliminated}) with respect to the norm (\ref{norm eliminated}).
\begin{lem}\label{lem678}
If the finite element spaces $Q_{h}, B_{h}, \bm{W}_{h}$  are chosen as (\ref{spacesQBW}), and $\bm{V}_{h}$ as (\ref{Vh}), then
\begin{equation}\label{inequalities1}
\begin{array}{l}
\|(B_{\bm{u}}^{E})^{\top}p\|^{2}_{(A_{\bm{u}}^{E})^{-1}}\geq\displaystyle{\frac{\gamma^{2}_{B}}{\eta^{2}\zeta^{2}}}\|p\|^{2}_{M_{p}}
-(D_{bb}^{-1}B_{b}^{\top}p,B_{b}^{\top}p),
\end{array}
\end{equation}
\begin{equation}\label{inequalities2}
\begin{array}{l}
\zeta^{2}\|B_{\bm{u}}^{E}\bm{v}\|^{2}_{M_{p}^{-1}}\leq \|\bm{v}\|^{2}_{A_{\bm{u}}^{E}},
\end{array}
\end{equation}
\begin{equation}\label{inequalities3}
\begin{array}{l}
\displaystyle{\|(B_{\bm{u}}^{E})^{\top}p\|_{(A_{\bm{u}}^{E})^{-1}}\leq\frac{1}{\zeta}\|p\|_{M_{p}}}.
\end{array}
\end{equation}
\end{lem}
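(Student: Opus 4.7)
The plan is to handle (\ref{inequalities2}) first, deduce (\ref{inequalities3}) from it by a one-line duality argument, and then combine a Schur-complement identity with the Stokes-like inf-sup from the proof of Theorem \ref{t:stab} to establish the main inequality (\ref{inequalities1}). For (\ref{inequalities2}), I would exploit the algebraic interpretation of $A_{\bm{u}}^E$ and $B_{\bm{u}}^E$ as statically-condensed forms obtained by eliminating the bubble unknowns. Given $\bm{v}_l\in\bm{V}_{h,1}$, I introduce the optimal bubble correction $\bm{v}_b^{\ast}:=-D_{bb}^{-1}A_{bl}\bm{v}_l\in\bm{V}_b$ and set $\bm{v}:=\bm{v}_l+\bm{v}_b^{\ast}\in\bm{V}_h$. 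A short block-matrix calculation then shows $\|\bm{v}_l\|_{A_{\bm{u}}^E}^2=a^D(\bm{v},\bm{v})$ and $B_{\bm{u}}^E\bm{v}_l=B_l\bm{v}_l+B_b\bm{v}_b^{\ast}$, which is precisely the coordinate representation of $-\nabla\cdot\bm{v}$; hence $\|B_{\bm{u}}^E\bm{v}_l\|_{M_p^{-1}}^2=\|\nabla\cdot\bm{v}\|^2$. Combining the divergence bound $\|\nabla\cdot\bm{v}\|\leq\frac{1}{\zeta}\|\bm{v}\|_a$ (derived at the start of the proof of Theorem \ref{t:stab}) with the spectral equivalence $\|\bm{v}\|_a^2\leq a^D(\bm{v},\bm{v})$ from Lemma \ref{equivalence for a} then yields (\ref{inequalities2}).

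Inequality (\ref{inequalities3}) is then an immediate duality step: writing $\|(B_{\bm{u}}^E)^\top p\|_{(A_{\bm{u}}^E)^{-1}}$ as $\sup_{\bm{v}_l\in\bm{V}_{h,1}}(p,B_{\bm{u}}^E\bm{v}_l)/\|\bm{v}_l\|_{A_{\bm{u}}^E}$, applying Cauchy--Schwarz in the $M_p$/$M_p^{-1}$ pairing on the numerator, and invoking (\ref{inequalities2}) to bound $\|B_{\bm{u}}^E\bm{v}_l\|_{M_p^{-1}}$ by $\frac{1}{\zeta}\|\bm{v}_l\|_{A_{\bm{u}}^E}$ gives the desired estimate.

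The real work is (\ref{inequalities1}). My approach is to first derive the Schur-complement identity
$$
\|(B_{\bm{u}}^E)^\top p\|_{(A_{\bm{u}}^E)^{-1}}^2 + (D_{bb}^{-1}B_b^\top p,B_b^\top p) = p^\top B(A^D)^{-1}B^\top p,
$$
where $B=(B_b,B_l)$ and $A^D=\left(\begin{array}{cc}D_{bb}&A_{bl}\\ A_{bl}^\top&A_{ll}\end{array}\right)$, by block-inverting $A^D$ and collecting terms; this uses crucially that $A_{\bm{u}}^E$ is the Schur complement of $D_{bb}$ in $A^D$ and that $B_{\bm{u}}^E=B_l-B_bD_{bb}^{-1}A_{bl}$. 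I would then rewrite the right-hand side variationally as $\sup_{\bm{u}\in\bm{V}_h}(p,\nabla\cdot\bm{u})^2/a^D(\bm{u},\bm{u})$, test with the inf-sup witness $\bm{h}\in\bm{V}_h$ from the proof of Theorem \ref{t:stab} (which satisfies $|(p,\nabla\cdot\bm{h})|\geq(\gamma_B/\zeta)\|p\|^2$ and $\|\bm{h}\|_a=\|p\|$), and bound $a^D(\bm{h},\bm{h})\leq\eta\|\bm{h}\|_a^2$ via Lemma \ref{equivalence for a}. Using $\|p\|^2=\|p\|_{M_p}^2$ and $\eta\geq 1$, this yields $p^\top B(A^D)^{-1}B^\top p\geq\gamma_B^2/(\eta\zeta^2)\|p\|_{M_p}^2\geq\gamma_B^2/(\eta^2\zeta^2)\|p\|_{M_p}^2$, and (\ref{inequalities1}) follows by rearrangement. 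The only genuine obstacle is the algebraic verification of the Schur-complement identity; once it is in hand, the rest of the argument is essentially one application of the Girault--Raviart stabilized inf-sup together with Lemma \ref{equivalence for a}.
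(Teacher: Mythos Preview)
Your proposal is correct and, for the one inequality the paper actually proves, matches it. The paper defers (\ref{inequalities1}) and (\ref{inequalities2}) entirely to an external reference \cite{James} and only establishes (\ref{inequalities3}); its argument there is the same duality step you give, written as the direct chain
\[
\|(B_{\bm{u}}^{E})^{\top}p\|^{2}_{(A_{\bm{u}}^{E})^{-1}}=(B_{\bm{u}}^{E}(A_{\bm{u}}^{E})^{-1}(B_{\bm{u}}^{E})^{\top}p,p)\leq\zeta\|B_{\bm{u}}^{E}(A_{\bm{u}}^{E})^{-1}(B_{\bm{u}}^{E})^{\top}p\|_{M_{p}^{-1}}\cdot\tfrac{1}{\zeta}\|p\|_{M_{p}},
\]
then (\ref{inequalities2}) with $\bm{v}=(A_{\bm{u}}^{E})^{-1}(B_{\bm{u}}^{E})^{\top}p$ and cancellation. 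Your self-contained derivations of (\ref{inequalities1}) and (\ref{inequalities2}) via the bubble-corrected lift $\bm{v}=\bm{v}_l-D_{bb}^{-1}A_{bl}\bm{v}_l$ and the Schur-complement identity $B_{\bm{u}}^{E}(A_{\bm{u}}^{E})^{-1}(B_{\bm{u}}^{E})^{\top}+B_bD_{bb}^{-1}B_b^{\top}=B(A^D)^{-1}B^{\top}$ are therefore a genuine addition rather than an alternative route; the Schur identity is exactly the standard block-inverse computation you sketch, and the subsequent use of the Girault--Raviart witness $\bm{h}$ together with Lemma~\ref{equivalence for a} is clean.

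One small correction that does not affect the result: in your argument for (\ref{inequalities2}) the claimed equality $\|B_{\bm{u}}^{E}\bm{v}_l\|_{M_p^{-1}}^{2}=\|\nabla\cdot\bm{v}\|^{2}$ is in general only an inequality $\leq$, since $B\bm{v}$ encodes the element averages of $-\nabla\cdot\bm{v}$ and the bubble contribution makes $\nabla\cdot\bm{v}$ non-constant on each element; the left side is $\|\Pi_{Q_h}(\nabla\cdot\bm{v})\|^{2}$. Only the inequality is needed, so the conclusion stands.
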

\begin{proof}
See (4.19) and (4.22) in \cite{James} for the proofs of (\ref{inequalities1}) and (\ref{inequalities2}). We only show (\ref{inequalities3}).

\begin{equation*}
\begin{array}{l}
\|(B_{\bm{u}}^{E})^{\top}p\|^{2}_{(A_{\bm{u}}^{E})^{-1}}=(B_{\bm{u}}^{E}(A_{\bm{u}}^{E})^{-1}(B_{\bm{u}}^{E})^{\top}p,p)\\
\hspace{26mm}\leq\zeta\|B_{\bm{u}}^{E}(A_{\bm{u}}^{E})^{-1}(B_{\bm{u}}^{E})^{\top}p\|_{M_{p}^{-1}}\displaystyle{\frac{1}{\zeta}}\|p\|_{M_{p}}\\
\hspace{26mm}\leq\|(A_{\bm{u}}^{E})^{-1}(B_{\bm{u}}^{E})^{\top}p\|_{A_{\bm{u}}^{E}}\displaystyle{\frac{1}{\zeta}}\|p\|_{M_{p}}\\
\hspace{26mm}=\|(B_{\bm{u}}^{E})^{\top}p\|_{(A_{\bm{u}}^{E})^{-1}}\displaystyle{\frac{1}{\zeta}}\|p\|_{M_{p}}.
\end{array}
\end{equation*}
So, $\|(B_{\bm{u}}^{E})^{\top}p\|_{(A_{\bm{u}}^{E})^{-1}}\leq\displaystyle{\frac{1}{\zeta}}\|p\|_{M_{p}}$.
\end{proof}
\begin{thm}\label{eliminated wellposed thm}
 If the finite element spaces $Q_{h}, B_{h}, \bm{W}_{h}$  are chosen as (\ref{spacesQBW}), and $\bm{V}_{h}$ as (\ref{Vh}), then the eliminated system
  (\ref{eliminated}) satisfies the following \emph{inf-sup} condition,
\begin{equation}\label{eliminated wellposed}
\begin{array}{l}
\displaystyle{\inf_{\bm{x}\in \bm{V}_{h}\times Q_{h}\times B_{h}}}\displaystyle{ \sup_{\bm{y}\in \bm{V}_{h}\times Q_{h}\times B_{h} } }\frac{(\mathcal{A}^{E}\bm{x},\bm{y})}{\|\bm{x}\|_{ D^{E}}\|\bm{y}\|_{ D^{E}}}\geq \beta_{1},
\end{array}
\end{equation}
and the continuity condition,
\begin{equation}\label{eliminated continuity}
\begin{array}{l}
(\mathcal{A}^{E}\bm{x},\bm{y})\leq \beta_{2}{\|\bm{x}\|_{ D^{E}}\|\bm{y}\|_{ D^{E}}},\ \forall\ \bm{x}, \bm{y}\in \bm{V}_{h}\times Q_{h}\times B_{h}.
\end{array}
\end{equation}
Thus, (\ref{eliminated}) is well-posed with respect to the weighted norm (\ref{norm eliminated}).
\end{thm}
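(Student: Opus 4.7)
The plan is to treat this as a perturbed saddle-point problem in two-by-two block form, following the standard Brezzi/Loghin--Wathen approach, but tailored to the weighted norm induced by $A_{\bm u}^E$ and $A_{p\beta}^E$. Since the off-diagonal coupling in $\mathcal{A}^E$ is skew-symmetric (i.e.\ the $\alpha(B_{\bm u,p\beta}^E)^\top$ in the upper-right matches $-\alpha B_{\bm u,p\beta}^E$ in the lower-left), testing with $\bm y=\bm x$ already yields $(\mathcal{A}^E\bm x,\bm x)=\|\bm u\|_{A_{\bm u}^E}^2+\|(p,\beta)\|_{B_{p\beta}^E}^2$. The only thing missing from the target $\|\bm x\|_{D^E}^2$ is the extra $(\alpha^2/\zeta^2)\|p\|_{M_p}^2$ contribution hidden in $A_{p\beta}^E=B_{p\beta}^E+\mathrm{diag}((\alpha^2/\zeta^2)M_p,0)$, so the whole proof reduces to manufacturing a perturbation of the test function that recovers this missing term.

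To establish continuity (\ref{eliminated continuity}), I would apply Cauchy--Schwarz block-by-block. The diagonal blocks are handled by the definitions of the $A_{\bm u}^E$- and $A_{p\beta}^E$-norms together with the obvious inequality $\|(p,\beta)\|_{B_{p\beta}^E}\le\|(p,\beta)\|_{A_{p\beta}^E}$. For the off-diagonal term, I rewrite $\alpha((B_{\bm u,p\beta}^E)^\top(p,\beta),\bm v)=\alpha(B_{\bm u}^E\bm v,p)$, bound it by $\alpha\|(B_{\bm u}^E)^\top p\|_{(A_{\bm u}^E)^{-1}}\|\bm v\|_{A_{\bm u}^E}$, and finish with (\ref{inequalities3}) to get $\alpha\|p\|_{M_p}/\zeta$; the latter is dominated by $\|(p,\beta)\|_{A_{p\beta}^E}$ by construction of $A_{p\beta}^E$. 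Collecting these gives $\beta_2$ independent of the physical and discretization parameters.

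For the inf-sup condition (\ref{eliminated wellposed}), given $\bm x=(\bm u,p,\beta)$ I will choose the Fortin-type test $\bm y=(\bm u-\theta\bm h,\,p,\,\beta)$, with $\bm h=(A_{\bm u}^E)^{-1}(B_{\bm u}^E)^\top p$ and a small parameter $\theta>0$ to be tuned. A direct expansion uses the skew-symmetric cancellation in the off-diagonal and yields
\begin{equation*}
(\mathcal{A}^E\bm x,\bm y)=\|\bm u\|_{A_{\bm u}^E}^2-\theta(A_{\bm u}^E\bm u,\bm h)+\alpha\theta\|(B_{\bm u}^E)^\top p\|_{(A_{\bm u}^E)^{-1}}^2+\|(p,\beta)\|_{B_{p\beta}^E}^2.
\end{equation*}
The second term is absorbed by Young's inequality into $(1/2)\|\bm u\|_{A_{\bm u}^E}^2+(\theta^2/2)\|\bm h\|_{A_{\bm u}^E}^2$, and since $\|\bm h\|_{A_{\bm u}^E}^2=\|(B_{\bm u}^E)^\top p\|_{(A_{\bm u}^E)^{-1}}^2\le(1/\zeta^2)\|p\|_{M_p}^2$ by (\ref{inequalities3}), a choice like $\theta=c\alpha$ preserves positivity. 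The crucial step is then to invoke (\ref{inequalities1}): $\alpha^2\|(B_{\bm u}^E)^\top p\|_{(A_{\bm u}^E)^{-1}}^2\ge(\gamma_B^2\alpha^2/\eta^2\zeta^2)\|p\|_{M_p}^2-\alpha^2(D_{bb}^{-1}B_b^\top p,B_b^\top p)$, and to observe that the subtracted bubble Schur-complement term is exactly one of the summands inside $\|(p,\beta)\|_{B_{p\beta}^E}^2$ through the definition of $B_p^E$; hence it is absorbed back. The net effect is to produce a positive multiple of $(\alpha^2/\zeta^2)\|p\|_{M_p}^2$ on the right-hand side, which, added to $\|(p,\beta)\|_{B_{p\beta}^E}^2$, reconstructs $\|(p,\beta)\|_{A_{p\beta}^E}^2$ up to constants depending only on $\gamma_B$ and $\eta$.

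Finally, using (\ref{inequalities3}) once more to estimate $\|\bm h\|_{A_{\bm u}^E}$, the triangle inequality gives $\|\bm y\|_{D^E}\le C\|\bm x\|_{D^E}$ with a parameter-robust constant, which together with the lower bound yields the stated $\beta_1$. The main technical hurdle is orchestrating this bookkeeping so that (i) the bubble correction produced by the $\alpha^2 B_bD_{bb}^{-1}B_b^\top$ term in $B_p^E$ absorbs the defect in (\ref{inequalities1}) and (ii) the tuning constants $\theta$, the Young weight, and the constants $\gamma_B,\eta$ all remain independent of $\lambda,\mu,M,\alpha,\kappa,\tau,h$; everything else is routine Cauchy--Schwarz.
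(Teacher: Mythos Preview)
Your approach is essentially the same as the paper's: test with $\bm y=(\bm u+\vartheta\bm h,p,\beta)$ so that the skew-symmetric off-diagonals cancel, use the weak inf-sup estimate (\ref{inequalities1}) to generate the missing $(\alpha^{2}/\zeta^{2})\|p\|_{M_p}^{2}$, absorb the negative bubble Schur term $\alpha^{2}(D_{bb}^{-1}B_b^\top p,B_b^\top p)$ into $\|(p,\beta)\|_{B_{p\beta}^E}^{2}$ via the definition of $B_p^E$, and finish continuity with (\ref{inequalities2})--(\ref{inequalities3}). The only cosmetic difference is that you take $\bm h=(A_{\bm u}^E)^{-1}(B_{\bm u}^E)^\top p$ explicitly (the Riesz representer) and then invoke (\ref{inequalities1}) on the resulting dual norm, whereas the paper first recasts (\ref{inequalities1}) as the existence of an $\bm h$ with prescribed $A_{\bm u}^E$-norm and pairing; both arrive at the same bound.

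One sign slip to fix: with $\bm y=(\bm u-\theta\bm h,p,\beta)$ as you wrote, the expansion actually gives $-\alpha\theta\|(B_{\bm u}^E)^\top p\|_{(A_{\bm u}^E)^{-1}}^{2}$, not $+\alpha\theta$, since $\alpha((B_{\bm u}^E)^\top p,-\theta\bm h)=-\alpha\theta\|(B_{\bm u}^E)^\top p\|_{(A_{\bm u}^E)^{-1}}^{2}$. You need $\bm y=(\bm u+\theta\bm h,p,\beta)$ (as the paper uses) to get the favorable sign; with that correction your argument goes through unchanged.
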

\begin{proof}
\eqref{inequalities1} gives a weak inf-sup condition,
\begin{equation*}
\displaystyle{\sup_{\bm{v}\in \bm{V}_{h}}}\frac{((B_{\bm{u}}^{E})^{\top}p,\bm{v})}{\|\bm{v}\|_{A_{\bm{u}}^{E}}}\geq
\biggl(\frac{\gamma^{2}_{B}}{\eta^{2}\zeta^{2}}\|p\|^{2}_{M_{p}}-(D_{bb}^{-1}B_{b}^{\top}p,B_{b}^{\top}p)\biggr)^{1/2}.
\end{equation*}
 Then for a given $p\in  Q_{h}$, there exists $\bm{h}\in \bm{V}_{h}$ such that $((B_{\bm{u}}^{E})^{\top}p,\bm{h})$$\geq \displaystyle{\frac{\gamma^{2}_{B}}{\eta^{2}\zeta^{2}}\|p\|^{2}_{M_{p}}
-(D_{bb}^{-1}B_{b}^{\top}p,B_{b}^{\top}p)}$ and $\|\bm{h}\|^{2}_{A_{\bm{u}}^{E}}=\displaystyle{\frac{\gamma^{2}_{B}}{\eta^{2}\zeta^{2}}\|p\|^{2}_{M_{p}}
-(D_{bb}^{-1}B_{b}^{\top}p,B_{b}^{\top}p)}$. For $\bm{x}=(\bm{u},p,\beta)^{\top}$, let $\bm{y}=(\bm{u}+\vartheta \bm{h},p,\beta)^{\top}$, then,
\begin{align*}
&(\mathcal{A}^{E}\bm{x},\bm{y})\\
&=\|\bm{u}\|^{2}_{A_{\bm{u}}^{E}}+\vartheta(A_{\bm{u}}^{E}\bm{u}, \bm{h})+\vartheta\alpha((B_{\bm{u}}^{E})^{\top}p,\bm{h})+\|(p,\beta)\|^{2}_{B_{p\beta}^{E}} \\
&\geq\displaystyle{\|\bm{u}\|^{2}_{A_{\bm{u}}^{E}}-\frac{1}{2}\|\bm{u}\|^{2}_{A_{\bm{u}}^{E}}-\displaystyle{\frac{\vartheta^{2}}{2}}\|\bm{h}\|^{2}_{A_{\bm{u}}^{E}}
+\vartheta \alpha\biggl(\frac{\gamma^{2}_{B}}{\eta^{2}\zeta^{2}}}\|p\|^{2}_{M_{p}}
-(D_{bb}^{-1}B_{b}^{\top}p,B_{b}^{\top}p)\biggr)\\
&\hspace{4mm}+\|(p,\beta)\|^{2}_{B_{p\beta}^{E}}
\\
&\geq\displaystyle{\frac{1}{2}\|\bm{u}\|^{2}_{A_{\bm{u}}^{E}}+\frac{3\gamma^{2}_{B}}{8\eta^{2}}\frac{\alpha^{2}}{\zeta^{2}}\|p\|^{2}_{M_{p}}
-\frac{3}{8}\alpha^{2}(D_{bb}^{-1}B_{b}^{\top}p,B_{b}^{\top}p)+\|(p,\beta)\|^{2}_{B_{p\beta}^{E}}}\\
&\geq\vartheta_{1}\|\bm{x}\|^{2}_{D^{E}},
\end{align*}   
where $\vartheta_{1}=\displaystyle{\min\{\frac{3\gamma^{2}_{B}}{8\eta^{2}},\frac{1}{2}\}}$, and we choose $\vartheta=\displaystyle{\frac{\alpha}{2}}$. On the other hand, by Cauchy-Schwarz inequality, we have,
\begin{align*}
\|\bm{y}\|^{2}_{D^{E}}
=\|\bm{u}+\vartheta \bm{h}\|^{2}_{A_{\bm{u}}^{E}}+\|(p,\beta)\|^{2}_{A_{p\beta}^{E}}
\leq\vartheta_{2}\|\bm{x}\|^{2}_{D^{E}},
\end{align*}
where $\vartheta_{2}=\displaystyle{\max\{\frac{1}{2}+\frac{\gamma^{2}_{B}}{2\eta^{2}},2\}}$. Therefore, (\ref{eliminated wellposed}) holds with
$\beta_{1}=\vartheta_{1}/\sqrt{\vartheta_{2}}$. Following from (\ref{inequalities2}), (\ref{inequalities3}) and the Cauchy-Schwarz inequality, we have
\begin{align*}
&(\mathcal{A}^{E}\bm{x},\bm{y})
\leq 2 \|\bm{x}\|_{D^{E}}\|\bm{y}\|_{D^{E}}.
\end{align*}  
Thus, (\ref{eliminated continuity}) holds with $\beta_{2}=2$, which concludes the proof.
\end{proof}

\section{Block Preconditioner}\label{sec:preconditioner}
 In this section, we use the well-posedness to develop block preconditioners for the linear system $\mathcal{A}^{E}$ (\ref{eliminated}).
 Following the general framework developed in~\cite{D. Loghin,K.A. Mardal}, we first consider block diagonal preconditioners (norm-equivalent
 preconditioners), then we discuss block triangular preconditioners following the framework developed in~\cite{D. Loghin,G. Starke,A. Klawonn,Y. Ma}
 for (FOV) equivalent preconditioners. We theoretically show that their performance is robust with respect to the discretization
 and physical parameters.
 \subsection{Block Diagonal Preconditioners}
Based on the framework proposed in~\cite{D. Loghin,K.A. Mardal}, a natural choice of a norm-equivalent preconditioner is the Riesz operator with respect to the inner product that induces the weighted norm (\ref{norm eliminated}). The Riesz operator for (\ref{norm eliminated}) takes the following block
diagonal form,
\begin{equation}\label{BDE}
\begin{array}{l}
\mathcal{B}_{D}^{E}=\left(
\begin{array}{ccccc}
A_{\bm{u}}^{E} &0 \\
0 & A_{p\beta}^{E}\\
\end{array}
\right)^{-1}.
  \end{array}
\end{equation}
Then, we have the following theorem on the condition number.
\begin{thm}\label{condition number1}
 If the finite element spaces $Q_{h}, B_{h}, \bm{W}_{h}$  are chosen as (\ref{spacesQBW}), and $\bm{V}_{h}$ as (\ref{Vh}), then
\begin{equation*}
\begin{array}{l}
\displaystyle{\mathcal{K}(\mathcal{B}_{D}^{E}\mathcal{A}^{E})=\mathcal{O}(1)=\displaystyle{\frac{\beta_{2}}{\beta_{1}}}}.
\end{array}
\end{equation*}
\end{thm}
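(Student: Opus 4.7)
The plan is to invoke the standard Mardal--Winther / Loghin--Wathen framework for norm-equivalent preconditioners, viewing $\mathcal{B}_D^E$ as the inverse of the Riesz map associated with the weighted norm $\|\cdot\|_{D^E}$ from \eqref{norm eliminated}. The key observation is that the block diagonal operator
\[
D := \operatorname{diag}(A_{\bm{u}}^E,\, A_{p\beta}^E) = (\mathcal{B}_D^E)^{-1}
\]
is SPD by Lemma \ref{SPD} and its corollary, and by construction it satisfies $(D\bm{x},\bm{x}) = \|\bm{x}\|_{D^E}^2$. So $\mathcal{B}_D^E$ is exactly the Riesz isomorphism for the inner product inducing $\|\cdot\|_{D^E}$.

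The first step is to rewrite the inf-sup and continuity bounds of Theorem \ref{eliminated wellposed thm} in operator form. Since $\|\bm{x}\|_{D^E} = \|D^{1/2}\bm{x}\|$ and $\|\bm{y}\|_{D^E} = \|D^{1/2}\bm{y}\|$, \eqref{eliminated wellposed} and \eqref{eliminated continuity} become
\[
\beta_1 \,\|D^{1/2}\bm{x}\| \;\leq\; \sup_{\bm{y}\neq 0}\frac{(D^{-1/2}\mathcal{A}^E D^{-1/2}\,D^{1/2}\bm{x},\,D^{1/2}\bm{y})}{\|D^{1/2}\bm{y}\|} \;\leq\; \beta_2 \,\|D^{1/2}\bm{x}\|.
\]
Setting $\widetilde{\bm{x}} := D^{1/2}\bm{x}$, this says that the symmetrically preconditioned matrix $\widetilde{\mathcal{A}} := D^{-1/2}\mathcal{A}^E D^{-1/2}$ satisfies $\beta_1 \|\widetilde{\bm{x}}\| \leq \|\widetilde{\mathcal{A}}\widetilde{\bm{x}}\| \leq \beta_2 \|\widetilde{\bm{x}}\|$ for all $\widetilde{\bm{x}}$, and the same for $\widetilde{\mathcal{A}}^\top$.

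Next I conclude that the singular values of $\widetilde{\mathcal{A}}$ lie in the interval $[\beta_1,\beta_2]$, so $\kappa(\widetilde{\mathcal{A}}) \leq \beta_2/\beta_1$. Since $\mathcal{B}_D^E \mathcal{A}^E = D^{-1/2}\widetilde{\mathcal{A}} D^{1/2}$ is similar to $\widetilde{\mathcal{A}}$, its spectrum coincides with that of $\widetilde{\mathcal{A}}$, giving
\[
\mathcal{K}(\mathcal{B}_D^E \mathcal{A}^E) \;\leq\; \frac{\beta_2}{\beta_1}.
\]
Because $\beta_1,\beta_2$ are independent of $h$, $\tau$, $\lambda$, $\mu$, $M$, $\alpha$, and $\kappa$ by Theorem \ref{eliminated wellposed thm}, this bound is $\mathcal{O}(1)$.

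I do not expect serious obstacles here; the entire argument is a direct transcription of the inf-sup/continuity pair into a condition number estimate for the preconditioned operator. The only minor point of care is that $\mathcal{A}^E$ is nonsymmetric (the $(1,2)$ and $(2,1)$ blocks have opposite signs), so one must argue via $\widetilde{\mathcal{A}}$ and its transpose rather than invoking a symmetric spectral theorem; the similarity transformation above is what makes the condition number of $\mathcal{B}_D^E \mathcal{A}^E$ coincide with that of $\widetilde{\mathcal{A}}$.
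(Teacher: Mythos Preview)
Your proposal is correct and follows exactly the route the paper takes: the paper's own proof is a one-line appeal to Theorem~\ref{eliminated wellposed thm} together with the Loghin--Wathen / Mardal--Winther framework, and you have simply unpacked that citation into the explicit $D^{-1/2}\mathcal{A}^E D^{-1/2}$ argument. One small wording caution: similarity of $\mathcal{B}_D^E\mathcal{A}^E$ and $\widetilde{\mathcal{A}}$ transfers eigenvalues, not singular values, so the cleanest justification of the final bound is to read $\mathcal{K}(\mathcal{B}_D^E\mathcal{A}^E)$ as the operator condition number in the $\|\cdot\|_{D^E}$ norm (which is exactly $\kappa_2(\widetilde{\mathcal{A}})$), rather than via spectra.
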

\begin{proof}
This result follows from Theorem \ref{eliminated wellposed thm} and the theory of condition number in~\cite{D. Loghin,K.A. Mardal}.
\end{proof}
 The action of inverting the diagonal blocks is expensive and sometimes infeasible in practice. Thus, we use spectrally equivalent
 SPD approximations to replace the diagonal blocks, i.e.,
\begin{equation*}
\begin{array}{l}
\widehat{\mathcal{B}_{D}^{E}}=\left(
\begin{array}{ccccc}
 S_{\bm{u}}^{E}&0 \\
0 & S_{p\beta}^{E}\\
\end{array}
\right),
  \end{array}
\end{equation*}
where $S_{\bm{u}}^{E}$ and $S_{p\beta}^{E}$ are spectrally equivalent to the action of the inverse of the diagonal blocks $A_{\bm{u}}^{E}$ and $A_{p\beta}^{E}$, respectively, i.e.,
\begin{equation}\label{inexact inequality1}
\begin{array}{l}
c_{1,\bm{u}}^{E}(S_{\bm{u}}^{E}\bm{u},\bm{u})\leq((A_{\bm{u}}^{E})^{-1}\bm{u},\bm{u})\leq c_{2,\bm{u}}^{E}(S_{\bm{u}}^{E}\bm{u},\bm{u}),
\end{array}
\end{equation}
\begin{equation}\label{inexact inequality2}
\begin{array}{l}
c_{1,p\beta}^{E}(S_{p\beta}^{E}(p,\beta)^{\top},(p,\beta)^{\top})\leq((A_{p\beta}^{E})^{-1}(p,\beta)^{\top},(p,\beta)^{\top})\\
\hspace{39mm}\leq c_{2,p\beta}^{E}(S_{p\beta}^{E}(p,\beta)^{\top},(p,\beta)^{\top}),
\end{array}
\end{equation}
where the constants $c_{1,\bm{u}}^{E}$, $c_{2,\bm{u}}^{E}$, $c_{1,p\beta}^{E}$ and $c_{2,p\beta}^{E}$ are independent of discretization and physical parameters.

Similarly to Theorem \ref{condition number1}, we have the following result about the condition number.
\begin{thm}
If the finite element spaces $Q_{h}, B_{h}, \bm{W}_{h}$  are chosen as (\ref{spacesQBW}), and $\bm{V}_{h}$ as (\ref{Vh}), then
\begin{equation*}
\begin{array}{l}
\displaystyle{\mathcal{K}(\widehat{\mathcal{B}_{D}^{E}}\mathcal{A}^{E})=\mathcal{O}(1)=\frac{c_{2}}{c_{1}}},
\end{array}
\end{equation*}
where $\displaystyle{c_{1}=\min\{\frac{1}{c_{2,\bm{u}}^{E}},\frac{1}{c_{2,p\beta}^{E}}\}}$ and
$\displaystyle{c_{2}=\max\{\frac{1}{c_{1,\bm{u}}^{E}},\frac{1}{c_{1,p\beta}^{E}}\}}$.
\end{thm}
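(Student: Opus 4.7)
The plan is to reduce the inexact case to the exact case by exploiting the spectral equivalence~\eqref{inexact inequality1}--\eqref{inexact inequality2}, which tells us that the block diagonal operator $\widehat{\mathcal{B}_{D}^{E}}$ is spectrally equivalent to the exact Riesz operator $\mathcal{B}_{D}^{E}$ defined in~\eqref{BDE}. Since Theorem~\ref{condition number1} already establishes $\mathcal{K}(\mathcal{B}_{D}^{E}\mathcal{A}^{E}) = \beta_{2}/\beta_{1}$, the inexact condition number follows by tracking how spectral equivalence propagates through the generalized eigenvalue problem.

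First, I would combine~\eqref{inexact inequality1} and~\eqref{inexact inequality2} block-wise to obtain, for every $\bm{x}=(\bm{u},p,\beta)^{\top}\in \bm{V}_{h}\times Q_{h}\times B_{h}$,
\begin{equation*}
c_{1}\,(\widehat{\mathcal{B}_{D}^{E}}\bm{x},\bm{x}) \;\leq\; (\mathcal{B}_{D}^{E}\bm{x},\bm{x}) \;\leq\; c_{2}\,(\widehat{\mathcal{B}_{D}^{E}}\bm{x},\bm{x}),
\end{equation*}
with $c_{1}=\min\{1/c_{2,\bm{u}}^{E},1/c_{2,p\beta}^{E}\}$ and $c_{2}=\max\{1/c_{1,\bm{u}}^{E},1/c_{1,p\beta}^{E}\}$. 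This is the only place where the block structure of both preconditioners is used, and it simply reflects that the norms induced by the two preconditioners are equivalent up to $c_{1},c_{2}$.

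Next, I would translate the well-posedness of Theorem~\ref{eliminated wellposed thm}, or equivalently the statement of Theorem~\ref{condition number1}, into the statement that every generalized eigenvalue $\mu$ of $\mathcal{A}^{E}\bm{x} = \mu\,(\mathcal{B}_{D}^{E})^{-1}\bm{x}$ satisfies $\beta_{1}\le |\mu| \le \beta_{2}$ in absolute value in the norm induced by $\mathcal{B}_{D}^{E}$. Applying the above two-sided spectral equivalence to replace $\mathcal{B}_{D}^{E}$ by $\widehat{\mathcal{B}_{D}^{E}}$ in this generalized eigenvalue problem inflates the upper bound by a factor $c_{2}$ and shrinks the lower bound by a factor $c_{1}$, so the extreme eigenvalues of $\widehat{\mathcal{B}_{D}^{E}}\mathcal{A}^{E}$ are bounded by $c_{1}\beta_{1}$ from below and $c_{2}\beta_{2}$ from above.

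Finally, taking the ratio yields the bound $\mathcal{K}(\widehat{\mathcal{B}_{D}^{E}}\mathcal{A}^{E}) \le (c_{2}\beta_{2})/(c_{1}\beta_{1}) = \mathcal{O}(1)$, as claimed; the explicit form $c_{2}/c_{1}$ in the statement follows from absorbing the $\mathcal{O}(1)$ factor $\beta_{2}/\beta_{1}$ (already independent of mesh size, time step, and physical parameters by Theorem~\ref{t:stab}) into the constants. The only real work is bookkeeping; no new inf-sup argument is needed. The mild obstacle worth care is simply making sure the spectral equivalence is applied in the $\widehat{\mathcal{B}_{D}^{E}}$-induced inner product consistently, so that the lower and upper bounds really get multiplied by $c_{1}$ and $c_{2}$ respectively; a clean way to do this is to write the Rayleigh quotient of $\widehat{\mathcal{B}_{D}^{E}}\mathcal{A}^{E}$ and insert $\mathcal{B}_{D}^{E}$ in both numerator and denominator before invoking Theorem~\ref{condition number1}.
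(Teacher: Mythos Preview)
Your proposal is correct and matches the paper's intended argument: the paper does not give an explicit proof here but simply writes ``Similarly to Theorem~\ref{condition number1}, we have the following result,'' deferring to the norm-equivalent preconditioner framework of~\cite{D. Loghin,K.A. Mardal} together with the spectral equivalences~\eqref{inexact inequality1}--\eqref{inexact inequality2}. Your reduction to the exact case via block-wise spectral equivalence, followed by tracking the constants $c_{1},c_{2}$ through the generalized Rayleigh quotient, is precisely that argument spelled out, including the correct observation that the stated bound $c_{2}/c_{1}$ tacitly absorbs the parameter-independent factor $\beta_{2}/\beta_{1}$ from Theorem~\ref{condition number1}.
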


\subsection{Block Triangular Preconditioner}
Now, we consider more general preconditioners, in particular, block upper and lower triangular preconditioners for the linear system
$\mathcal{A}^{E}$. Following~\cite{D. Loghin}, based on the weighted norm (\ref{norm eliminated}) and Riesz operator (\ref{BDE}), the block lower triangular preconditioner takes the following block form,
\begin{equation}\label{BLE}
\begin{array}{l}
\mathcal{B}_{L}^{E}=\left(
\begin{array}{ccccc}
A_{\bm{u}}^{E} &0\\
-\alpha B_{\bm{u},p\beta} ^{E} & A_{p\beta}^{E}\\
\end{array}
\right)^{-1}.
  \end{array}
\end{equation}
And the inexact block lower triangular preconditioner is,
\begin{equation}\label{inexact BLE}
\begin{array}{l}
\widehat{\mathcal{B}_{L}^{E}}=\left(
\begin{array}{ccccc}
(S_{\bm{u}}^{E})^{-1} &0\\
-\alpha B_{\bm{u},p\beta}^{E} &(S_{p\beta}^{E})^{-1} \\
\end{array}
\right)^{-1}.
  \end{array}
\end{equation}
Next theorem shows that (\ref{BLE}) and $\mathcal{A}^{E}$ are FOV-equivalent and, therefore, $\mathcal{B}_{L}^{E}$ provides a preconditioner for general minimal residual (GMRES) method as suggested in~\cite{D. Loghin,G. Starke,A. Klawonn,Y. Ma}
\begin{thm}\label{thm BLE}
Assuming a shape regular mesh and the discretization described above, there exists constants $\Sigma_{L}$ and $\Upsilon_{L}$, independent of
discretization and physical parameters, such that, for any $\bm{x} = ( \bm{u},p,\beta)^{\top}\neq 0$,
\begin{equation*}
\begin{array}{l}
 \displaystyle{\frac{(\mathcal{B}_{L}^{E}\mathcal{A}^{E}\bm{x},\bm{x})_{(\mathcal{B}_{D}^{E})^{-1}}}{(\bm{x},\bm{x})_{(\mathcal{B}_{D}^{E})^{-1}}}\geq\Sigma_{L},\ \
\frac{\|\mathcal{B}_{L}^{E}\mathcal{A}^{E}\bm{x}\|_{(\mathcal{B}_{D}^{E})^{-1}}}{\|\bm{x}\|_{(\mathcal{B}_{D}^{E})^{-1}}}\leq \Upsilon_{L}}.
 \end{array}
\end{equation*}
\end{thm}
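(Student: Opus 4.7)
The plan is to verify the two FOV-equivalence inequalities by a direct computation of $\bm{y} := \mathcal{B}_L^E \mathcal{A}^E \bm{x}$ in block form, and then to reuse the tools (Lemma~\ref{lem678} together with the structural identity $A_{p\beta}^E - B_{p\beta}^E = \mathrm{diag}(\tfrac{\alpha^2}{\zeta^2}M_p,\,0)$) that drove the well-posedness proof of Theorem~\ref{eliminated wellposed thm}. Writing $\bm{x}=(\bm{u},q)^\top$ with $q=(p,\beta)^\top$ and solving the block lower-triangular system $(\mathcal{B}_L^E)^{-1}\bm{y}=\mathcal{A}^E\bm{x}$ yields the explicit formulas $\bm{y}_1 = \bm{u} + \alpha(A_{\bm{u}}^E)^{-1}(B_{\bm{u},p\beta}^E)^\top q$ and $\bm{y}_2 = (A_{p\beta}^E)^{-1}\bigl(B_{p\beta}^E q + \alpha^2 B_{\bm{u},p\beta}^E (A_{\bm{u}}^E)^{-1}(B_{\bm{u},p\beta}^E)^\top q\bigr)$. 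Since $(B_{\bm{u},p\beta}^E)^\top q = (B_{\bm{u}}^E)^\top p$, every mixed quantity that appears is already controlled by Lemma~\ref{lem678}.

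For the coercivity inequality, I would expand $(\mathcal{B}_L^E\mathcal{A}^E\bm{x},\bm{x})_{(\mathcal{B}_D^E)^{-1}} = (A_{\bm{u}}^E\bm{y}_1,\bm{u}) + (A_{p\beta}^E\bm{y}_2,q)$ into the sum $\|\bm{u}\|^2_{A_{\bm{u}}^E} + \alpha((B_{\bm{u}}^E)^\top p,\bm{u}) + \|q\|^2_{B_{p\beta}^E} + \alpha^2\|(B_{\bm{u}}^E)^\top p\|^2_{(A_{\bm{u}}^E)^{-1}}$. Young's inequality on the cross term consumes $\tfrac12\|\bm{u}\|^2_{A_{\bm{u}}^E}$ and half of the squared $(A_{\bm{u}}^E)^{-1}$-norm, leaving the remaining half to be estimated from below by \eqref{inequalities1}: this produces the target $\tfrac{\gamma_B^2\alpha^2}{2\eta^2\zeta^2}\|p\|^2_{M_p}$ up to a defect proportional to $(D_{bb}^{-1}B_b^\top p,B_b^\top p)$. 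The defect is absorbed into $\|q\|^2_{B_{p\beta}^E}$ using the estimate $\|q\|^2_{B_{p\beta}^E}\ge \alpha^2(D_{bb}^{-1}B_b^\top p,B_b^\top p)$ that falls out of the proof of Lemma~\ref{SPD}. Combining with $\|q\|^2_{A_{p\beta}^E} = \|q\|^2_{B_{p\beta}^E} + \tfrac{\alpha^2}{\zeta^2}\|p\|^2_{M_p}$ then gives $\Sigma_L = \min\{\tfrac12,\tfrac{\gamma_B^2}{2\eta^2}\}$, independent of $h$, $\tau$, and all physical parameters.

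For the continuity inequality, I would bound $\|\bm{y}_1\|_{A_{\bm{u}}^E}$ and $\|\bm{y}_2\|_{A_{p\beta}^E}$ separately. The triangle inequality applied to $\bm{y}_1$ followed by \eqref{inequalities3} gives $\alpha^2\|(B_{\bm{u}}^E)^\top p\|^2_{(A_{\bm{u}}^E)^{-1}} \le \tfrac{\alpha^2}{\zeta^2}\|p\|^2_{M_p} \le \|q\|^2_{A_{p\beta}^E}$, so $\|\bm{y}_1\|^2_{A_{\bm{u}}^E}\le C\|\bm{x}\|^2_{(\mathcal{B}_D^E)^{-1}}$. For $\bm{y}_2$, rather than inverting $A_{p\beta}^E$ explicitly, I would use $(A_{p\beta}^E\bm{y}_2,\bm{y}_2) = (B_{p\beta}^E q,\bm{y}_2) + \alpha^2\bigl((A_{\bm{u}}^E)^{-1}(B_{\bm{u}}^E)^\top p,\,(B_{\bm{u}}^E)^\top(\bm{y}_2)_p\bigr)$, bound the first summand by $\|q\|_{A_{p\beta}^E}\|\bm{y}_2\|_{A_{p\beta}^E}$ via $B_{p\beta}^E\le A_{p\beta}^E$, and the second by the same product through Cauchy--Schwarz in the $(A_{\bm{u}}^E)^{-1}$-inner product together with \eqref{inequalities3}. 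Dividing out $\|\bm{y}_2\|_{A_{p\beta}^E}$ yields the desired parameter-independent constant $\Upsilon_L$.

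The main obstacle is the coercivity step: the parameter-robust lower bound hinges on the defect $\alpha^2(D_{bb}^{-1}B_b^\top p,B_b^\top p)$ produced by \eqref{inequalities1} being exactly absorbable into $\|q\|^2_{B_{p\beta}^E}$, while $\tfrac{\alpha^2}{2}\|(B_{\bm{u}}^E)^\top p\|^2_{(A_{\bm{u}}^E)^{-1}}$ must simultaneously dominate the extra $\tfrac{\alpha^2}{\zeta^2}\|p\|^2_{M_p}$ carried by $A_{p\beta}^E$ relative to $B_{p\beta}^E$. This delicate balance is precisely what makes the choice $A_{p\beta}^E = B_{p\beta}^E + \mathrm{diag}(\tfrac{\alpha^2}{\zeta^2}M_p,0)$ the correct preconditioning block here (rather than $B_{p\beta}^E$ itself), and it is the same mechanism that underlies the well-posedness proof of Theorem~\ref{eliminated wellposed thm}.
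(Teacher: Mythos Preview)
Your proposal is correct and follows essentially the same route as the paper: both expand $((\mathcal{B}_D^E)^{-1}\mathcal{B}_L^E\mathcal{A}^E\bm{x},\bm{x})$ into $\|\bm{u}\|^2_{A_{\bm{u}}^E}+\alpha((B_{\bm{u}}^E)^\top p,\bm{u})+\alpha^2\|(B_{\bm{u}}^E)^\top p\|^2_{(A_{\bm{u}}^E)^{-1}}+\|q\|^2_{B_{p\beta}^E}$, control the cross term (the paper via a $3\times 3$ SPD matrix with minimal eigenvalue $\varrho_1$, you via the equivalent Young's inequality), then invoke \eqref{inequalities1} and absorb the $D_{bb}^{-1}$-defect into $\|q\|^2_{B_{p\beta}^E}$ exactly as in Lemma~\ref{SPD}. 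Your upper bound via \eqref{inequalities3} and $B_{p\beta}^E\le A_{p\beta}^E$ is likewise what the paper sketches when it cites \eqref{inequalities2}, \eqref{inequalities3}, and Cauchy--Schwarz to obtain $\Upsilon_L=2$.
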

\begin{proof}
By direct computation, we have,
\begin{align*}
&((\mathcal{B}_{D}^{E})^{-1}\mathcal{B}_{L}^{E}\mathcal{A}^{E}\bm{x},\bm{x})\\
&=\|\bm{u}\|^{2}_{A_{\bm{u}}^{E}}+\alpha((B_{\bm{u},p\beta}^{E})^{\top}(p,\beta)^{\top},\bm{u})
+\alpha^{2}\|(B_{\bm{u},p\beta}^{E})^{\top}(p,\beta)^{\top}\|^{2}_{(A_{\bm{u}}^{E})^{-1}}+\|(p,\beta)\|^{2}_{B_{p\beta}^{E}}\\
&\geq  \left(
\begin{array}{ccccc}
\|\bm{u}\|_{A_{\bm{u}}^{E}}\\
\alpha\|(B_{\bm{u}}^{E})^{\top}p\|_{(A_{\bm{u}}^{E})^{-1}}\\
\|(p,\beta)\|_{B_{p\beta}^{E}}\\
\end{array}
\right) ^{\top}
\left(
\begin{array}{ccccc}
1&-\frac{1}{2}&0\\
-\frac{1}{2}&1&0\\
0&0&1\\
\end{array}
\right)
\left(
\begin{array}{ccccc}
\|\bm{u}\|_{A_{\bm{u}}^{E}}\\
\alpha\|(B_{\bm{u}}^{E})^{\top}p\|_{(A_{\bm{u}}^{E})^{-1}}\\
\|(p,\beta)\|_{B_{p\beta}^{E}}\\
\end{array}
\right).
\end{align*}
The matrix in the middle is SPD. Thus, there exists a $\varrho_{1} >0$ such that
\begin{equation*}
\begin{array}{l}
((\mathcal{B}_{D}^{E})^{-1}\mathcal{B}_{L}^{E}\mathcal{A}^{E}\bm{x},\bm{x})\\
\geq \varrho_{1}\biggl(\|\bm{u}\|^{2}_{A_{\bm{u}}^{E}}+\alpha^{2}\|(B_{\bm{u}}^{E})^{\top}p\|^{2}_{(A_{\bm{u}}^{E})^{-1}}+\|(p,\beta)\|^{2}_{B_{p\beta}^{E}}\biggr)\\
\geq \displaystyle{\varrho_{1}\biggl(\|\bm{u}\|^{2}_{A_{\bm{u}}^{E}}+\frac{\alpha^{2}}{2}\biggl(\frac{\gamma^{2}_{B}}{\eta^{2}\zeta^{2}}\|p\|_{M_{p}}^{2}
-(D_{bb}^{-1}B_{b}^{\top}p,B_{b}^{\top}p)\biggr)+\|p\|^{2}_{B_{p}^{E}}}\\
\hspace{4mm}+\biggl(\|(p,\beta)\|^{2}_{B_{p\beta}^{E}}-\|p\|^{2}_{B_{p}^{E}}\biggr)\biggr)\\
\geq \displaystyle{\varrho_{1}\biggl(\|\bm{u}\|^{2}_{A_{\bm{u}}^{E}}+\frac{1}{M}\|p\|_{M_{p}}^{2}+\frac{\gamma^{2}_{B}\alpha^{2}}{2\eta^{2}\zeta^{2}}\|p\|_{M_{p}}^{2}
+\frac{\alpha^{2}}{2}(D_{bb}^{-1}B_{b}^{\top}p,B_{b}^{\top}p)}\\
\hspace{4mm}+\tau(M_{\bm{w}}^{-1}B_{\bm{w}}^{\top}p,B_{\bm{w}}^{\top}p)\biggr)
+\varrho_{1}
\left(
\begin{array}{ccccc}
 p,&\beta\\
\end{array}
\right)\left(
\begin{array}{ccccc}
 0&\tau B_{\bm{w}}M_{\bm{w}}^{-1}B_{\beta}^{\top} \\
\tau B_{\beta}M_{\bm{w}}^{-1}B_{\bm{w}}^{\top}&\tau B_{\beta}M_{\bm{w}}^{-1}B_{\beta}^{\top}\\
\end{array}
\right)
\left(
\begin{array}{ccccc}
 p\\
\beta\\
\end{array}
\right)
\\
\geq \Sigma_{L}(\bm{x},\bm{x})_{(\mathcal{B}_{D}^{E})^{-1}},
\end{array}
\end{equation*}
where $\Sigma_{L}=\displaystyle{\varrho_{1}\min\{\frac{\gamma^{2}_{B}}{2\eta^{2}},\frac{1}{2}\}}$.
Using inequalities (\ref{inequalities2}), (\ref{inequalities3}) and the Cauchy-Schwarz inequality,
we have $\|\mathcal{B}_{L}^{E}\mathcal{A}^{E}\bm{x}\|_{(\mathcal{B}_{D}^{E})^{-1}}\leq \Upsilon_{L}\|\bm{x}\|_{(\mathcal{B}_{D}^{E})^{-1}}$ with $\Upsilon_{L}=2$, which concludes the proof.
\end{proof}

Now we prove the inexact block lower triangular preconditioner (\ref{inexact BLE}) satisfies the requirements to be an FOV-equivalent preconditioner for the $\mathcal{A}^{E}$ system as well, when the diagonal blocks are solved sufficiently accurately.
\begin{thm}\label{thm BLE inexact}
Assuming a shape regular mesh,  the discretization described above, and that (\ref{inexact inequality1}) and (\ref{inexact inequality2}) hold with $c_{2,\bm{u}}^{E}>\frac{1}{2}$, then there exists constants $\Sigma^{I}_{L}$ and $\Upsilon^{I}_{L}$, independent of discretization and physical parameters, such that, for any $\bm{x} = ( \bm{u},p,\beta)^{\top}\neq 0$,
\begin{equation*}
\begin{array}{l}
\displaystyle{\frac{(\widehat{\mathcal{B}_{L}^{E}}\mathcal{A}^{E}\bm{x},\bm{x})_{(\widehat{\mathcal{B}_{D}^{E}})^{-1}}}{(\bm{x},\bm{x})
_{(\widehat{\mathcal{B}_{D}^{E}})^{-1}}}\geq\Sigma^{I}_{L} ,\ \
\frac{\|\widehat{\mathcal{B}_{L}^{E}}\mathcal{A}^{E}\bm{x}\|_{(\mathcal{B}_{D}^{E})^{-1}}}{\|\bm{x}\|_{(\widehat{\mathcal{B}_{D}^{E}})^{-1}}}\leq \Upsilon^{I}_{L}}.
 \end{array}
\end{equation*}
\end{thm}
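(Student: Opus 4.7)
The plan is to mirror the strategy of Theorem~\ref{thm BLE}, but to carefully track the error introduced by replacing the exact inverses $(A_{\bm{u}}^{E})^{-1}$ and $(A_{p\beta}^{E})^{-1}$ with the spectrally equivalent SPD approximations $S_{\bm{u}}^{E}$ and $S_{p\beta}^{E}$. The first step is algebraic: compute $\widehat{\mathcal{B}_{L}^{E}}$ by inverting the block triangular matrix in (\ref{inexact BLE}), then form $(\widehat{\mathcal{B}_{D}^{E}})^{-1}\widehat{\mathcal{B}_{L}^{E}}\mathcal{A}^{E}$ explicitly. The result should match the exact case on the off-diagonal entry $\alpha (B^{E}_{\bm{u},p\beta})^{\top}$, but the $(2,1)$-block now contains $\alpha B^{E}_{\bm{u},p\beta}(S_{\bm{u}}^{E}A_{\bm{u}}^{E}-I)$ instead of $0$, and the Schur-complement block becomes $\alpha^{2} B^{E}_{\bm{u},p\beta} S_{\bm{u}}^{E} (B^{E}_{\bm{u},p\beta})^{\top}+B_{p\beta}^{E}$ in place of $\alpha^{2} B^{E}_{\bm{u},p\beta}(A_{\bm{u}}^{E})^{-1} (B^{E}_{\bm{u},p\beta})^{\top}+B_{p\beta}^{E}$.

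Taking the inner product with $\bm{x}=(\bm{u},p,\beta)^{\top}$ and rearranging using the symmetry of $S_{\bm{u}}^{E}$, the quadratic form collapses to
\[
\|\bm{u}\|_{A_{\bm{u}}^{E}}^{2}+\alpha\,(S_{\bm{u}}^{E}A_{\bm{u}}^{E}\bm{u},(B^{E}_{\bm{u},p\beta})^{\top}(p,\beta)^{\top})+\alpha^{2}\,\|(B^{E}_{\bm{u},p\beta})^{\top}(p,\beta)^{\top}\|_{S_{\bm{u}}^{E}}^{2}+\|(p,\beta)\|_{B_{p\beta}^{E}}^{2}.
\]
To bound the cross term I would apply Cauchy--Schwarz in the $S_{\bm{u}}^{E}$-induced bilinear form, obtaining a control of the form $\alpha\,\|A_{\bm{u}}^{E}\bm{u}\|_{S_{\bm{u}}^{E}}\,\|(B^{E}_{\bm{u},p\beta})^{\top}(p,\beta)^{\top}\|_{S_{\bm{u}}^{E}}$, and then invoke the spectral equivalence (\ref{inexact inequality1}) to replace $\|A_{\bm{u}}^{E}\bm{u}\|_{S_{\bm{u}}^{E}}^{2}=(S_{\bm{u}}^{E}A_{\bm{u}}^{E}\bm{u},A_{\bm{u}}^{E}\bm{u})$ by a multiple of $\|\bm{u}\|_{A_{\bm{u}}^{E}}^{2}$. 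This is precisely where the hypothesis $c_{2,\bm{u}}^{E}>\tfrac{1}{2}$ enters: after applying Young's inequality with an appropriate splitting parameter, the resulting $2\times 2$ coefficient matrix (with the $\bm{u}$ and $\alpha\|(B^{E}_{\bm{u},p\beta})^{\top}(p,\beta)^{\top}\|_{S_{\bm{u}}^{E}}$ components) must remain positive definite, and this requires the quantitative condition on $c_{2,\bm{u}}^{E}$ to guarantee its determinant is bounded away from zero independently of $h,\tau$ and the physical parameters.

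Once this quadratic form is controlled from below, I would continue exactly as in the proof of Theorem~\ref{thm BLE}: use (\ref{inequalities1}) from Lemma~\ref{lem678} to split off the $M_{p}$ contribution from $\|(B^{E}_{\bm{u}})^{\top}p\|_{(A_{\bm{u}}^{E})^{-1}}^{2}$ (which after the $c_{1,\bm{u}}^{E},c_{2,\bm{u}}^{E}$ equivalence becomes $\|(B^{E}_{\bm{u}})^{\top}p\|_{S_{\bm{u}}^{E}}^{2}$ up to constants), then assemble the remaining pieces to recover the norm $\|\bm{x}\|_{(\widehat{\mathcal{B}_{D}^{E}})^{-1}}^{2}=\|\bm{u}\|_{(S_{\bm{u}}^{E})^{-1}}^{2}+\|(p,\beta)\|_{(S_{p\beta}^{E})^{-1}}^{2}$, again using (\ref{inexact inequality1})--(\ref{inexact inequality2}) to pass between the exact and inexact diagonal norms. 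This yields $\Sigma_{L}^{I}$ depending only on the fixed equivalence constants and on $\gamma_{B},\eta$.

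The continuity bound $\|\widehat{\mathcal{B}_{L}^{E}}\mathcal{A}^{E}\bm{x}\|_{(\mathcal{B}_{D}^{E})^{-1}}\le\Upsilon_{L}^{I}\|\bm{x}\|_{(\widehat{\mathcal{B}_{D}^{E}})^{-1}}$ is the easier direction: after expanding $(\mathcal{B}_{D}^{E})^{-1}\widehat{\mathcal{B}_{L}^{E}}\mathcal{A}^{E}\bm{x}$ block by block, I would apply the triangle inequality, (\ref{inequalities2})--(\ref{inequalities3}) from Lemma~\ref{lem678}, and finally the inexact/exact equivalence once more to absorb the diagonal block norms. The main obstacle I anticipate is the coercivity step: unlike the exact proof where the cross term cleanly assembles into a universal $\begin{pmatrix}1&-1/2\\-1/2&1\end{pmatrix}$ SPD matrix, here the off-diagonal weight inherits a factor governed by the spectral equivalence constants, so the threshold condition on $c_{2,\bm{u}}^{E}$ is what prevents the coefficient matrix from becoming singular when the inner solves are too crude.
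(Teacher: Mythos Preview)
Your overall strategy matches the paper's proof: compute the quadratic form
\[
\|\bm{u}\|_{A_{\bm{u}}^{E}}^{2}+\alpha\,(S_{\bm{u}}^{E}A_{\bm{u}}^{E}\bm{u},(B^{E}_{\bm{u},p\beta})^{\top}(p,\beta)^{\top})+\alpha^{2}\,\|(B^{E}_{\bm{u},p\beta})^{\top}(p,\beta)^{\top}\|_{S_{\bm{u}}^{E}}^{2}+\|(p,\beta)\|_{B_{p\beta}^{E}}^{2},
\]
bound the cross term by Cauchy--Schwarz, reduce to a $3\times 3$ SPD coefficient matrix, then invoke (\ref{inequalities1}) and the spectral equivalences (\ref{inexact inequality1})--(\ref{inexact inequality2}) to recover the $\|\cdot\|_{(\widehat{\mathcal{B}_{D}^{E}})^{-1}}$ norm. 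The upper bound is handled exactly as you describe.

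However, you misidentify where the hypothesis $c_{2,\bm{u}}^{E}>\tfrac12$ enters. It is \emph{not} needed for the positivity of the $2\times2$ (or $3\times3$) coefficient matrix: in the paper that matrix is the same universal matrix $\bigl(\begin{smallmatrix}1&-1/2\\-1/2&1\end{smallmatrix}\bigr)$ as in the exact case, and any constant appearing in the off-diagonal from bounding $\|A_{\bm{u}}^{E}\bm{u}\|_{S_{\bm{u}}^{E}}$ by $\|\bm{u}\|_{A_{\bm{u}}^{E}}$ would involve $c_{1,\bm{u}}^{E}$, not $c_{2,\bm{u}}^{E}$. The condition $c_{2,\bm{u}}^{E}>\tfrac12$ enters \emph{later}: to use (\ref{inequalities1}) one first passes from $\|(B_{\bm{u}}^{E})^{\top}p\|_{S_{\bm{u}}^{E}}^{2}$ to $\|(B_{\bm{u}}^{E})^{\top}p\|_{(A_{\bm{u}}^{E})^{-1}}^{2}$ at the cost of a factor $1/c_{2,\bm{u}}^{E}$ (right inequality in (\ref{inexact inequality1})), and (\ref{inequalities1}) then produces a \emph{negative} term $-\tfrac{\alpha^{2}}{2c_{2,\bm{u}}^{E}}(D_{bb}^{-1}B_{b}^{\top}p,B_{b}^{\top}p)$. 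This must be absorbed by the positive contribution $\alpha^{2}(D_{bb}^{-1}B_{b}^{\top}p,B_{b}^{\top}p)$ sitting inside $\|p\|_{B_{p}^{E}}^{2}\subset\|(p,\beta)\|_{B_{p\beta}^{E}}^{2}$, yielding the coefficient $\bigl(1-\tfrac{1}{2c_{2,\bm{u}}^{E}}\bigr)$, which is nonnegative exactly when $c_{2,\bm{u}}^{E}\ge\tfrac12$. Once you relocate the hypothesis to this step, your argument goes through and coincides with the paper's.
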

\begin{proof}
By direct computation, we have,
\begin{equation*}
\begin{array}{l}
((\widehat{\mathcal{B}_{D}^{E}})^{-1}\widehat{\mathcal{B}_{L}^{E}}\mathcal{A}^{E}\bm{x},\bm{x})\\
=\|\bm{u}\|^{2}_{A_{\bm{u}}^{E}}+\alpha(S_{\bm{u}}^{E}A_{\bm{u}}^{E}\bm{u},(B_{\bm{u},p\beta}^{E})^{\top}(p,\beta)^{\top})
+\|(p,\beta)\|^{2}_{B_{p\beta}^{E}}
\\
\hspace{4mm}+\alpha^{2}\|(B_{\bm{u},p\beta}^{E})^{\top}(p,\beta)^{\top}\|^{2}_{S_{\bm{u}}^{E}}\\
\geq  \left(
\begin{array}{ccccc}
\|\bm{u}\|_{A_{\bm{u}}^{E}}\\
\alpha\|(B_{\bm{u}}^{E})^{\top}p\|_{S_{\bm{u}}^{E}}\\
\|(p,\beta)\|_{B_{p\beta}^{E}}\\
\end{array}
\right) ^{\top}
\left(
\begin{array}{ccccc}
1&-\frac{1}{2}&0\\
-\frac{1}{2}&1&0\\
0&0&1\\
\end{array}
\right)
\left(
\begin{array}{ccccc}
\|\bm{u}\|_{A_{\bm{u}}^{E}}\\
\alpha\|(B_{\bm{u}}^{E})^{\top}p\|_{S_{\bm{u}}^{E}}\\
\|(p,\beta)\|_{B_{p\beta}^{E}}\\
\end{array}
\right).
\end{array}
\end{equation*}
The matrix in the middle is SPD. Thus, there exists a $\varrho_{2}>0$ such that
\begin{align*}
&((\widehat{\mathcal{B}_{D}^{E}})^{-1}\widehat{\mathcal{B}_{L}^{E}}\mathcal{A}^{E}\bm{x},\bm{x})\\
&\geq\varrho_{2}\biggl(\|\bm{u}\|^{2}_{A_{\bm{u}}^{E}}+\alpha^{2}\|(B_{\bm{u}}^{E})^{\top}p\|^{2}_{S_{\bm{u}}^{E}}
+\|(p,\beta)\|^{2}_{B_{p\beta}^{E}}\biggr)\\
&\geq\displaystyle{\varrho_{2}\biggl[\frac{1}{c_{2,\bm{u}}^{E}}\biggl(\|\bm{u}\|^{2}_{(S_{\bm{u}}^{E})^{-1}}+\frac{\gamma^{2}_{B}\alpha^{2}}{2\eta^{2}
\zeta^{2}}\|p\|^{2}-\frac{\alpha^{2}}{2}(D_{bb}^{-1}B_{b}^{\top}p,B_{b}^{\top}p)\biggr)+\|p\|^{2}_{B_{p}^{E}}}\\
&\hspace{4mm}+\biggl(\|(p,\beta)\|^{2}_{B_{p\beta}^{E}}
-\|p\|^{2}_{B_{p}^{E}}\biggr)\biggr]\\
&\geq \displaystyle{\varrho_{2}\biggl(\frac{1}{c_{2,\bm{u}}^{E}}\|\bm{u}\|^{2}_{(S_{\bm{u}}^{E})^{-1}}
+\frac{\gamma^{2}_{B}}{2c_{2,\bm{u}}^{E}\eta^{2}}\frac{\alpha^{2}}{\zeta^{2}}\|p\|^{2}
+(1-\frac{1}{2c_{2,\bm{u}}^{E}})\alpha^{2}(D_{bb}^{-1}B_{b}^{\top}p,B_{b}^{\top}p)}\\
&\hspace{4mm}\displaystyle{+\frac{1}{M}\|p\|^{2}}+\tau(M_{\bm{w}}^{-1}B_{\bm{w}}^{\top}p,B_{\bm{w}}^{\top}p)\biggr)\\
&\hspace{4mm}+
\varrho_{2}
\left(
\begin{array}{ccccc}
 p,&\beta\\
\end{array}
\right)\left(
\begin{array}{ccccc}
 0&\tau B_{\bm{w}}M_{\bm{w}}^{-1}B_{\beta}^{\top} \\
\tau B_{\beta}M_{\bm{w}}^{-1}B_{\bm{w}}^{\top}&\tau B_{\beta}M_{\bm{w}}^{-1}B_{\beta}^{\top}\\
\end{array}
\right)
\left(
\begin{array}{ccccc}
 p\\
\beta\\
\end{array}
\right)
\\
&\geq\displaystyle{\varrho_{2}\frac{1}{c_{2,\bm{u}}^{E}}\biggl(\|\bm{u}\|^{2}_{(S_{\bm{u}}^{E})^{-1}}
+\min\{\frac{\gamma^{2}_{B}}{2\eta^{2}},c_{2,\bm{u}}^{E}-\frac{1}{2}\}\|(p,\beta)\|^{2}_{A_{p\beta}^{E}}\biggr)}\\
&\geq \Sigma^{I}_{L}(\bm{x},\bm{x})_{(\widehat{\mathcal{B}_{D}^{E}})^{-1}},
\end{align*}
where $\Sigma^{I}_{L}=\displaystyle{\frac{\varrho_{2}}{2c_{2,p\beta}^{E}c_{2,\bm{u}}^{E}}\min\{ 2c_{2,p\beta}^{E}, \frac{\gamma^{2}_{B}}{\eta^{2}},
2c_{2,\bm{u}}^{E}-1\}}$. The upper bound follows from the inequalities (\ref{inequalities2}), (\ref{inequalities3}), and (\ref{inexact inequality1}), and Cauchy-Schwarz inequality,
 with $ \Upsilon^{I}_{L}=\displaystyle{\max\{\frac{3}{(c_{1,\bm{u}}^{E})},\frac{1}{(c_{1,p\beta}^{E})}(1+\frac{2}{c_{1,\bm{u}}^{E}})\}}$.
\end{proof}
Similarly, we can also consider the following block upper triangular preconditioner for $\mathcal{A}^{E}$,
\begin{equation}\label{BUE}
\begin{array}{l}
\mathcal{B}_{U}^{E}=\left(
\begin{array}{ccccc}
 A_{\bm{u}}^{E} &\alpha (B_{\bm{u},p\beta} ^{E})^{\top}\\
 &A_{p\beta}^{E}\\
\end{array}
\right)^{-1},
  \end{array}
\end{equation}
and its corresponding inexact version,
\begin{equation}\label{inexact BUE}
\begin{array}{l}
\widehat{\mathcal{B}_{U}^{E}}=\left(
\begin{array}{ccccc}
(S_{\bm{u}}^{E})^{-1} &\alpha (B_{\bm{u},p\beta}^{E})^{\top}\\
0 & (S_{p\beta}^{E})^{-1}\\
\end{array}
\right)^{-1}.
  \end{array}
\end{equation}

The following theorems show that these block upper triangular preconditioners are also parameter robust. The proofs are similar to the proofs for Theorem \ref{thm BLE} and \ref{thm BLE inexact} and, therefore, are omitted.
\begin{thm}\label{thm BUE}
Assuming a shape regular mesh and the discretization described above, then there exists constants $\Sigma_{U}$ and $\Upsilon_{U}$, independent of discretization and physical parameters, such that, for any $\bm{x}^{\prime} =(\mathcal{B}_{U}^{E})^{-1}\bm{x}$, with $\bm{x}=( \bm{u},p,\beta)^{\top}\neq 0$,
\begin{equation*}
\begin{array}{l}
\displaystyle{\frac{(\mathcal{A}^{E}\mathcal{B}_{U}^{E}\bm{x}^{\prime} ,\bm{x}^{\prime} )_{\mathcal{B}_{D}^{E}}}{(\bm{x}^{\prime},\bm{x}^{\prime})_{\mathcal{B}_{D}^{E}}}\geq\Sigma_{U},\ \
\frac{\|\mathcal{A}^{E}\mathcal{B}_{U}^{E}\bm{x}^{\prime}\|_{\mathcal{B}_{D}^{E}}}{\|\bm{x}^{\prime}\|_{\mathcal{B}_{D}^{E}}}\leq \Upsilon_{U}}.
 \end{array}
\end{equation*}
\end{thm}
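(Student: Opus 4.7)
The plan is to mimic the proof of Theorem \ref{thm BLE} almost verbatim, exploiting the structural parallel between the block lower and block upper triangular preconditioners. First I would perform the change of variable $\bm{x}=\mathcal{B}_{U}^{E}\bm{x}^{\prime}$, so that $\mathcal{A}^{E}\mathcal{B}_{U}^{E}\bm{x}^{\prime}=\mathcal{A}^{E}\bm{x}$, and the identity $(\mathcal{B}_{U}^{E})^{-1}\bm{x}=\bm{x}^{\prime}$ gives the explicit componentwise relations
\[
\bm{u}^{\prime}=A_{\bm{u}}^{E}\bm{u}+\alpha (B_{\bm{u},p\beta}^{E})^{\top}(p,\beta)^{\top},\qquad (p^{\prime},\beta^{\prime})^{\top}=A_{p\beta}^{E}(p,\beta)^{\top}.
\]
Substituting into $(\mathcal{B}_{D}^{E}\mathcal{A}^{E}\bm{x},\bm{x}^{\prime})$ and expanding $\|\bm{u}^{\prime}\|^{2}_{(A_{\bm{u}}^{E})^{-1}}$, the cross term $-\alpha(B_{\bm{u},p\beta}^{E}\bm{u},(p,\beta)^{\top})$ coming from the bottom block of $\mathcal{A}^{E}\bm{x}$ combines with the mixed term from the top block, leaving
\[
(\mathcal{A}^{E}\mathcal{B}_{U}^{E}\bm{x}^{\prime},\bm{x}^{\prime})_{\mathcal{B}_{D}^{E}}=\|\bm{u}\|^{2}_{A_{\bm{u}}^{E}}+\alpha(\bm{u},(B_{\bm{u},p\beta}^{E})^{\top}(p,\beta)^{\top})+\alpha^{2}\|(B_{\bm{u},p\beta}^{E})^{\top}(p,\beta)^{\top}\|^{2}_{(A_{\bm{u}}^{E})^{-1}}+\|(p,\beta)\|^{2}_{B_{p\beta}^{E}}.
\]

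This is precisely the quadratic form encountered in the proof of Theorem \ref{thm BLE}. Then I would apply the same $3\times 3$ SPD matrix argument, extracting a uniform constant $\varrho>0$ such that the above is bounded below by $\varrho\bigl(\|\bm{u}\|^{2}_{A_{\bm{u}}^{E}}+\alpha^{2}\|(B_{\bm{u}}^{E})^{\top}p\|^{2}_{(A_{\bm{u}}^{E})^{-1}}+\|(p,\beta)\|^{2}_{B_{p\beta}^{E}}\bigr)$. Next, the weak inf-sup inequality (\ref{inequalities1}) from Lemma \ref{lem678} is used to absorb the bubble-Schur piece $\alpha^{2}(D_{bb}^{-1}B_{b}^{\top}p,B_{b}^{\top}p)$ and produce the clean $\alpha^{2}\zeta^{-2}\|p\|^{2}_{M_{p}}$ term that, when combined with $\|(p,\beta)\|^{2}_{B_{p\beta}^{E}}$, yields $\|(p,\beta)\|^{2}_{A_{p\beta}^{E}}$ up to parameter-independent constants. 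This gives the lower estimate in terms of $(\bm{x},\bm{x})_{(\mathcal{B}_{D}^{E})^{-1}}$.

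The final step is to relate $(\bm{x}^{\prime},\bm{x}^{\prime})_{\mathcal{B}_{D}^{E}}$ back to $(\bm{x},\bm{x})_{(\mathcal{B}_{D}^{E})^{-1}}$. Using the explicit expressions for $\bm{u}^{\prime}$ and $(p^{\prime},\beta^{\prime})^{\top}$ above together with Cauchy--Schwarz and the bound (\ref{inequalities3}) from Lemma \ref{lem678}, one directly shows $(\bm{x}^{\prime},\bm{x}^{\prime})_{\mathcal{B}_{D}^{E}}\leq C\,(\bm{x},\bm{x})_{(\mathcal{B}_{D}^{E})^{-1}}$ with $C$ independent of all parameters. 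Dividing then yields the lower bound with $\Sigma_{U}$ depending only on $\gamma_{B}$, $\eta$, and universal constants. The continuity bound $\Upsilon_{U}$ is obtained by applying Cauchy--Schwarz termwise to $\mathcal{A}^{E}\bm{x}$ paired against $\bm{x}^{\prime}$ in the $\mathcal{B}_{D}^{E}$ inner product, controlling the mixed contributions via (\ref{inequalities2}) and (\ref{inequalities3}).

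The main technical obstacle is the bookkeeping of the change of variables: because the denominator involves $\bm{x}^{\prime}=(\mathcal{B}_{U}^{E})^{-1}\bm{x}$ rather than $\bm{x}$ directly, one must track how the $(A_{\bm{u}}^{E})^{-1}$ and $(A_{p\beta}^{E})^{-1}$ weights transform, and verify that the mixed terms produced in both numerator and denominator combine harmoniously to give a parameter-independent ratio. Beyond this accounting, the proof reuses the algebraic identities and the SPD $3\times 3$ matrix trick of Theorem \ref{thm BLE} essentially unchanged, which is exactly why the authors omit it.
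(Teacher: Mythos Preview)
Your proposal is correct and follows exactly the approach the paper intends: the authors explicitly state that the proof of Theorem~\ref{thm BUE} is similar to that of Theorem~\ref{thm BLE} and omit it, and your change of variables $\bm{x}=\mathcal{B}_{U}^{E}\bm{x}^{\prime}$ indeed reduces the numerator to the same quadratic form treated there, after which the $3\times 3$ SPD matrix trick, inequality~(\ref{inequalities1}), and the bound on $(\bm{x}^{\prime},\bm{x}^{\prime})_{\mathcal{B}_{D}^{E}}$ via~(\ref{inequalities3}) complete the argument.
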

\begin{thm}\label{thm BUE inexact}
Assuming a shape regular mesh, the discretization described above, and that (\ref{inexact inequality1}) and (\ref{inexact inequality2}) hold  with
$c_{2,\bm{u}}^{E}>\frac{1}{2}$, then there exists constants $\Sigma^{I}_{U}$ and $\Upsilon^{I}_{U}$, independent of discretization or physical parameters,
 such that, for any $\bm{x}^{\prime} =(\widehat{\mathcal{B}_{U}^{E}})^{-1}\bm{x}$, with $\bm{x}=( \bm{u},p,\beta)^{\top}\neq 0$,
\begin{equation*}
\begin{array}{l}
\displaystyle{\frac{(\mathcal{A}^{E}\widehat{\mathcal{B}_{U}^{E}}\bm{x}^{\prime},\bm{x}^{\prime})_{\widehat{\mathcal{B}_{D}^{E}}}}{(\bm{x}^{\prime},\bm{x}^{\prime})
_{\widehat{\mathcal{B}_{D}^{E}}}}\geq\Sigma^{I}_{U} ,\ \
\frac{\|\mathcal{A}^{E}\widehat{\mathcal{B}_{U}^{E}}\bm{x}^{\prime}\|_{\widehat{\mathcal{B}_{D}^{E}}}}{\|\bm{x}^{\prime}\|_{\widehat{\mathcal{B}_{D}^{E}}}}\leq \Upsilon^{I}_{U}}.
 \end{array}
\end{equation*}
\end{thm}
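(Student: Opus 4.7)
The plan is to follow, step by step, the argument of Theorem~\ref{thm BLE inexact}, with each occurrence of the block lower triangular structure replaced by its block upper triangular counterpart, and with attention to the fact that we are now analysing a right preconditioner measured in the $(\cdot,\cdot)_{\widehat{\mathcal{B}_D^E}}$ inner product. First, I would make the change of variables $\bm{x}=\widehat{\mathcal{B}_U^E}\bm{x}'$ so that the numerator of the FOV quotient rewrites as $(\widehat{\mathcal{B}_D^E}\mathcal{A}^E\bm{x},(\widehat{\mathcal{B}_U^E})^{-1}\bm{x})$. Writing $\bm{x}=(\bm{u},(p,\beta)^\top)^\top$ and expanding this expression using the block forms of $\mathcal{A}^E$ and $(\widehat{\mathcal{B}_U^E})^{-1}$ produces the same collection of terms as in the proof of Theorem~\ref{thm BLE inexact}: a diagonal contribution $\|\bm{u}\|^2_{(S_{\bm{u}}^E)^{-1}}$, the pressure--multiplier diagonal term $\|(p,\beta)\|^2_{B_{p\beta}^E}$, a squared coupling term $\alpha^2\|(B_{\bm{u},p\beta}^E)^\top(p,\beta)^\top\|^2_{S_{\bm{u}}^E}$, and a single $\alpha$-linear cross term between $\bm{u}$ and $(p,\beta)$.

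Once the expansion is in place, I would obtain the lower bound by the same $3\times 3$ SPD matrix trick used in Theorem~\ref{thm BLE inexact}: bound the cross term from below by the auxiliary $3\times 3$ matrix with unit diagonal and $-\tfrac12$ in the $(1,2)$ and $(2,1)$ entries, applied to the vector with components $\|\bm{u}\|_{(S_{\bm{u}}^E)^{-1}}$, $\alpha\|(B_{\bm{u}}^E)^\top p\|_{S_{\bm{u}}^E}$, and $\|(p,\beta)\|_{B_{p\beta}^E}$; then invoke the weak inf-sup inequality~\eqref{inequalities1} to convert $\|(B_{\bm{u}}^E)^\top p\|^2_{S_{\bm{u}}^E}$ into a positive multiple of $\|p\|^2$ minus a remainder in $(D_{bb}^{-1}B_b^\top p,B_b^\top p)$. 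The hypothesis $c_{2,\bm{u}}^E>\tfrac12$ is precisely what allows this remainder to be absorbed into the $\alpha^2(D_{bb}^{-1}B_b^\top p,B_b^\top p)$ piece hidden inside $\|p\|^2_{B_p^E}\subset\|(p,\beta)\|^2_{B_{p\beta}^E}$, exactly as in the lower triangular case, producing $\Sigma_U^I$. For the boundedness half I would apply the Cauchy--Schwarz inequality term by term, use \eqref{inequalities2} and \eqref{inequalities3} to control the off-diagonal block $B_{\bm{u},p\beta}^E$ by the diagonal blocks, and invoke \eqref{inexact inequality1}--\eqref{inexact inequality2} to interchange $A_{\bm{u}}^E$ and $A_{p\beta}^E$ with their spectrally equivalent approximations, giving the continuity constant $\Upsilon_U^I$ independent of the mesh and physical parameters.

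The main technical obstacle I anticipate is careful bookkeeping of the block expansion: the matrix $(\widehat{\mathcal{B}_U^E})^{-1}$ carries a nontrivial $(1,2)$-block $\alpha(B_{\bm{u},p\beta}^E)^\top$, which contributes an extra $\alpha(B_{\bm{u},p\beta}^E)^\top(p,\beta)^\top$ term to the first block of $(\widehat{\mathcal{B}_U^E})^{-1}\bm{x}$; this term must be paired correctly with the first block of $\mathcal{A}^E\bm{x}$ through the $\widehat{\mathcal{B}_D^E}$ inner product so that the same auxiliary $3\times 3$ SPD matrix as in the proof of Theorem~\ref{thm BLE inexact} re-emerges. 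Once this identification is established, the remainder of the proof reduces to a line-by-line transcription of the lower triangular argument.
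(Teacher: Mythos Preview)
Your proposal is correct and matches the paper's intended approach: the paper itself omits the proof entirely, stating only that it is similar to that of Theorems~\ref{thm BLE} and~\ref{thm BLE inexact}, and your plan carries out precisely this adaptation. One minor bookkeeping point: in the initial block expansion the diagonal displacement term that emerges is $\|\bm{u}\|^2_{A_{\bm{u}}^E}$ (not $\|\bm{u}\|^2_{(S_{\bm{u}}^E)^{-1}}$), just as in the proof of Theorem~\ref{thm BLE inexact}; the passage to $\|\bm{u}\|^2_{(S_{\bm{u}}^E)^{-1}}$ comes afterwards via the spectral equivalence~\eqref{inexact inequality1}, but this does not affect the validity of your outline.
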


\section{Numerical Experiments}\label{sec:numerical}
In this section, we give some numerical examples. The examples in Section 6.1 are used to investigate the accuracy of the stabilized hybrid method.
In addition, the cantilever bracket problem is proposed to show the effectiveness of our discretization. In Section 6.2, we demonstrate
the robustness of the preconditioners presented in Section 5. In all test cases we consider a diagonal permeability tensor $\kappa=K\bm{I}$ with constant $K$.
\subsection{The Accuracy and Efficiency of the Stabilization Method}\label{sec:numerical1}
Firstly, we demonstrate numerically that for Biot's model, the error in the finite element approximation does not decrease when the permeability is small relative to the mesh size. We consider $\Omega=(0,1)\times(0,1)$ with homogeneous Dirichlet boundary conditions for $\bm{u}$, and Neumann boundary conditions for $p$. We cover $\Omega$ with a uniform triangular grid by dividing an $N \times N$ uniform square mesh into right
triangles, where the mesh spacing is defined by $h=1/N$. The material parameters are $\lambda = 2$, $\mu = 1$, $\alpha = 1$,
and $M = 10^{6}$. We consider a diagonal permeability tensor $\kappa=K \bm{I}$ with constant $K$. The data is set so that the exact solution is given by
\begin{equation*}
p=1,\ \ \bm{u}= \nabla \times \varphi=
\left(
\begin{array}{ccc}
 \partial _{y} \varphi\\
-\partial _{x} \varphi
\end{array}
\right),\ \ \varphi(x,y)=[x y(1-x)(1-y)]^{2}.
\end{equation*}
Note that the solution is designed to satisfy $\nabla\cdot \bm{u}=0$ at any time $t$. We set $\tau = 1$ and $t_{max} = 1$.

 As shown in Table \ref{table:1}, the results for the energy norm ($\|\bm{v}\|^{2}_{a} := a(\bm{v}, \bm{v})$) of the displacement errors and the $L^{2}$-norm for pressure errors convergence when $K$ is relatively big. When $K$ becomes smaller, the performance becomes worse, and eventually
  divergences when $K$ is really small.
\begin{table}[!h]
\begin{center}
\caption{Error and convergence rate of the hybrid mixed finite element method~\eqref{e:HMFE}.}\label{table:1}
\begin{tabular}{ccccccc}
\hline\noalign{\smallskip}
 &  &$N=4$&  $N=8$ &  $N=16$  & $N=32$  &$N=64$\\
\noalign{\smallskip}\hline\noalign{\smallskip}
\textit{K}=1e-4&$ \|\bm{u}-\bm{u}_{h}\|_{a}$         &   0.0509  & 0.0270   &  0.0135  &  0.0068  &  0.0034 \\
      &$ \|p-p_{h}\|$                           &   0.1160  &  0.0535  &  0.0088  &  0.0015  &  0.0003 \\
\noalign{\smallskip}\hline\noalign{\smallskip}
\textit{K}=1e-6&$ \|\bm{u}-\bm{u}_{h}\|_{a}$         & 0.0570    & 0.0543  &  0.0314  &  0.0081  &  0.0034  \\
   &$ \|p-p_{h}\|$                              &  0.1587   &  0.3277 &   0.3199  &  0.0763  &  0.0099  \\
\noalign{\smallskip}\hline\noalign{\smallskip}
\textit{K}=1e-8&$ \|\bm{u}-\bm{u}_{h}\|_{a}$         &  0.0571  &  0.0571  &  0.0565  &  0.0478  &  0.0169  \\
      &$ \|p-p_{h}\|$                           &   0.1591   & 0.3553  &  0.7157  &  1.1509  &  0.6537     \\
\noalign{\smallskip}\hline\noalign{\smallskip}
\textit{K}=1e-10&$ \|\bm{u}-\bm{u}_{h}\|_{a}$        &    0.0571  &  0.0571  &  0.0571  &  0.0570  &  0.0550 \\
       &$ \|p-p_{h}\|$                          &   0.1588  &  0.3550   & 0.7271  &  1.4616  &  2.9182 \\
\noalign{\smallskip}\hline\noalign{\smallskip}
\end{tabular}
\end{center}
\end{table}
 Then, we use the proposed stabilized hybrid discretization to solve it. Table \ref{tab: error results for stable scheme} shows good convergence results even when $K \rightarrow 0$.

We also compare the errors obtained by (\ref{full system eqns}) with diagonal bubble functions with those provided by the bubble enriched system (\ref{e:stableHMFE}), in order to see that the same error reduction is achieved. Fig. \ref{Fig:compare a and aD} displays a comparison of the displacement and pressure errors in
the energy and $L^{2}$ norms, respectively, for different grid sizes. We choose $K= 10^{-8}$ here, though similar results are obtained for different
values of $K$. We observe both schemes produces the same convergence rate  although the scheme corresponding to the diagonal version provides
slightly worse errors. However, this scheme, when the bubble block is eliminated, uses fewer degrees of freedom, thus computationally more efficient.
\begin{table}[!h]
\begin{center}
\caption{ Error and convergence rate of the stabilized hybrid mixed finite element method~\eqref{full system eqns} with perturbation}\label{tab: error results for stable scheme}
\begin{tabular}{ccccccc}
\hline\noalign{\smallskip}
 &  &$N=4$&  $N=8$ &  $N=16$  & $N=32$  &$N=64$ \\
      \noalign{\smallskip}\hline\noalign{\smallskip}
\emph{K}=1e-4&$ \|\bm{u}-\bm{u}_{h}\|_{a}$         & 0.0369&   0.0183&   0.0093&   0.0047 &  0.0024\\
      &$ \|p-p_{h}\|$                   & 0.0511&   0.0185&   0.0034&   0.0006 &  0.0001  \\
      \noalign{\smallskip}\hline\noalign{\smallskip}
\emph{K}=1e-6&$ \|\bm{u}-\bm{u}_{h}\|_{a}$         & 0.0377&   0.0189&   0.0091&   0.0045 &  0.0022  \\
      &$ \|p-p_{h}\|$                   & 0.0593&   0.0346&   0.0155&   0.0062 &  0.0019  \\
      \noalign{\smallskip}\hline\noalign{\smallskip}
\emph{K}=1e-8&$ \|\bm{u}-\bm{u}_{h}\|_{a}$         & 0.0377&   0.0189&   0.0092&   0.0045 &  0.0023  \\
      &$ \|p-p_{h}\|$                   & 0.0594&   0.0349&   0.0162&   0.0074 &  0.0035  \\
   \noalign{\smallskip}\hline\noalign{\smallskip}
\emph{K}=1e-10&$ \|\bm{u}-\bm{u}_{h}\|_{a}$        & 0.0377&   0.0189&   0.0092&   0.0045 &  0.0023  \\
       &$ \|p-p_{h}\|$                  & 0.0594&   0.0349&   0.0162&   0.0074 &  0.0035  \\
\noalign{\smallskip}\hline\noalign{\smallskip}
\end{tabular}
\end{center}
\end{table}
\begin{figure}[!htb]
    \centering
    \subfloat[]{\label{Fig:displacementerr}
    \includegraphics[width=2.2in]{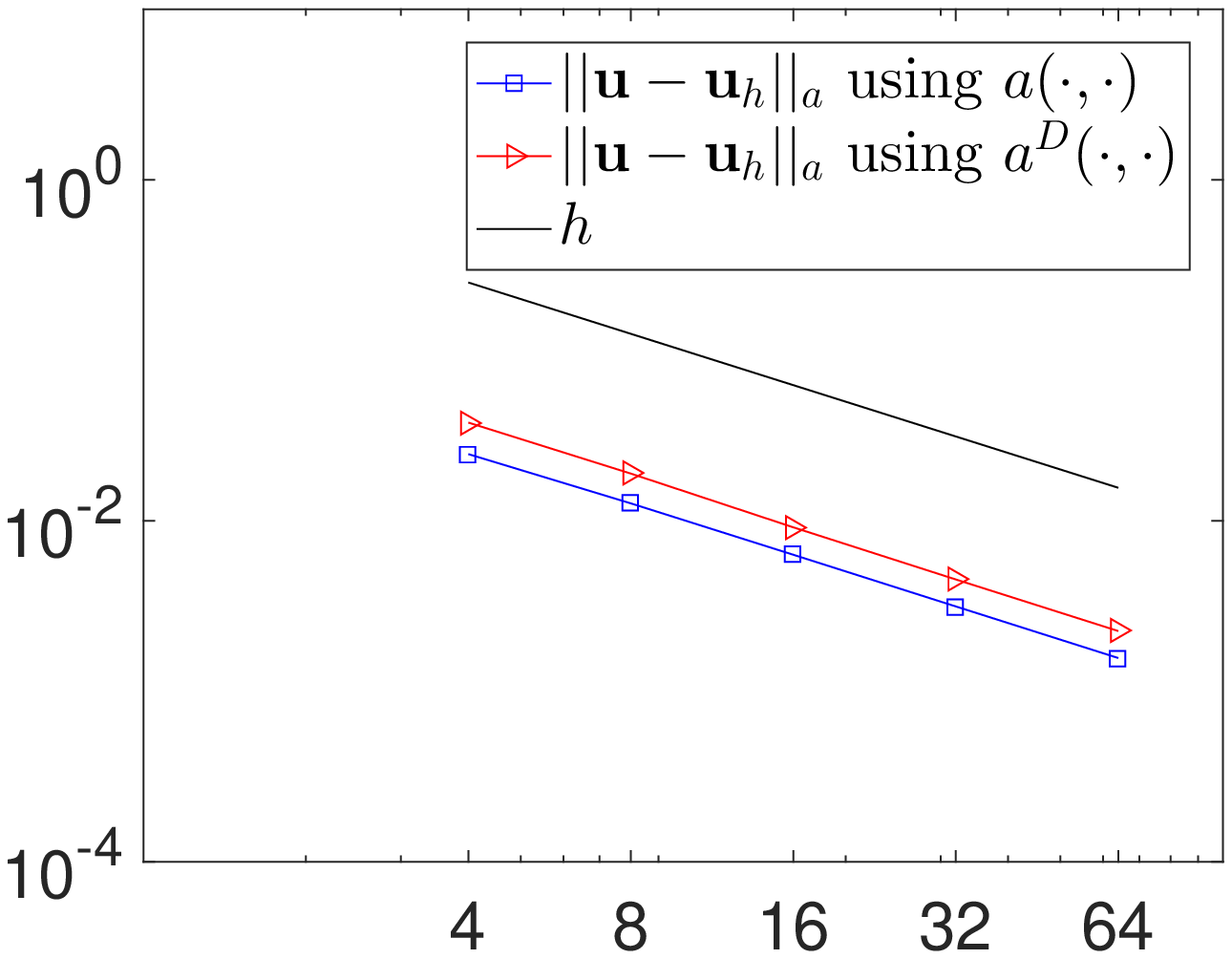}}
    \quad
    \subfloat[]{\label{Fig:pressureerr}
    \includegraphics[width=2.2in]{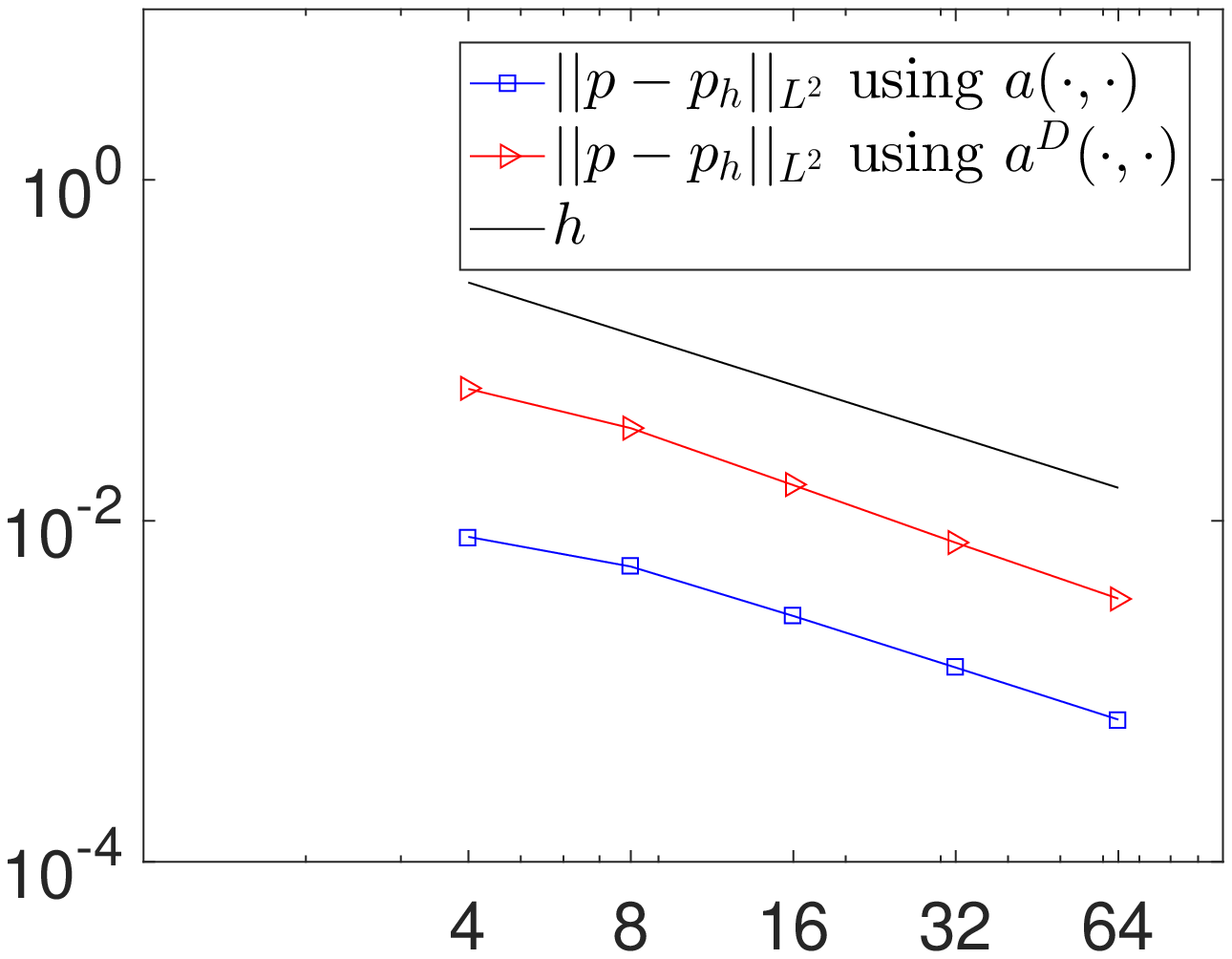}}\\
    \caption{Reduction of the (a) displacement and (b) pressure errors for different mesh-sizes, by using the enriched
finite element scheme~(\ref{e:stableHMFE}), as well as the scheme (\ref{full system eqns}) with perturbations.}\label{Fig:compare a and aD}
\end{figure}

We also consider the cantilever bracket problem~\cite{P. J. Phillips4} illustrated in Fig. \ref{Fig:12}. The domain is the unit square $[0, 1] \times [0, 1]$. For the flow problem we impose a no-flow boundary condition along the entire boundary.
 For the elasticity problem, we assume that the left side boundary of the domain is  fixed, so a no-displacement boundary condition is imposed. We
 impose a downward traction at the top of the domain and a traction-free boundary condition at the right and bottom of the domain. The initial
 displacement and pressure are assumed to be zero.
\begin{figure}[h!]
  \centering
  \includegraphics[height=170pt,width=210pt]{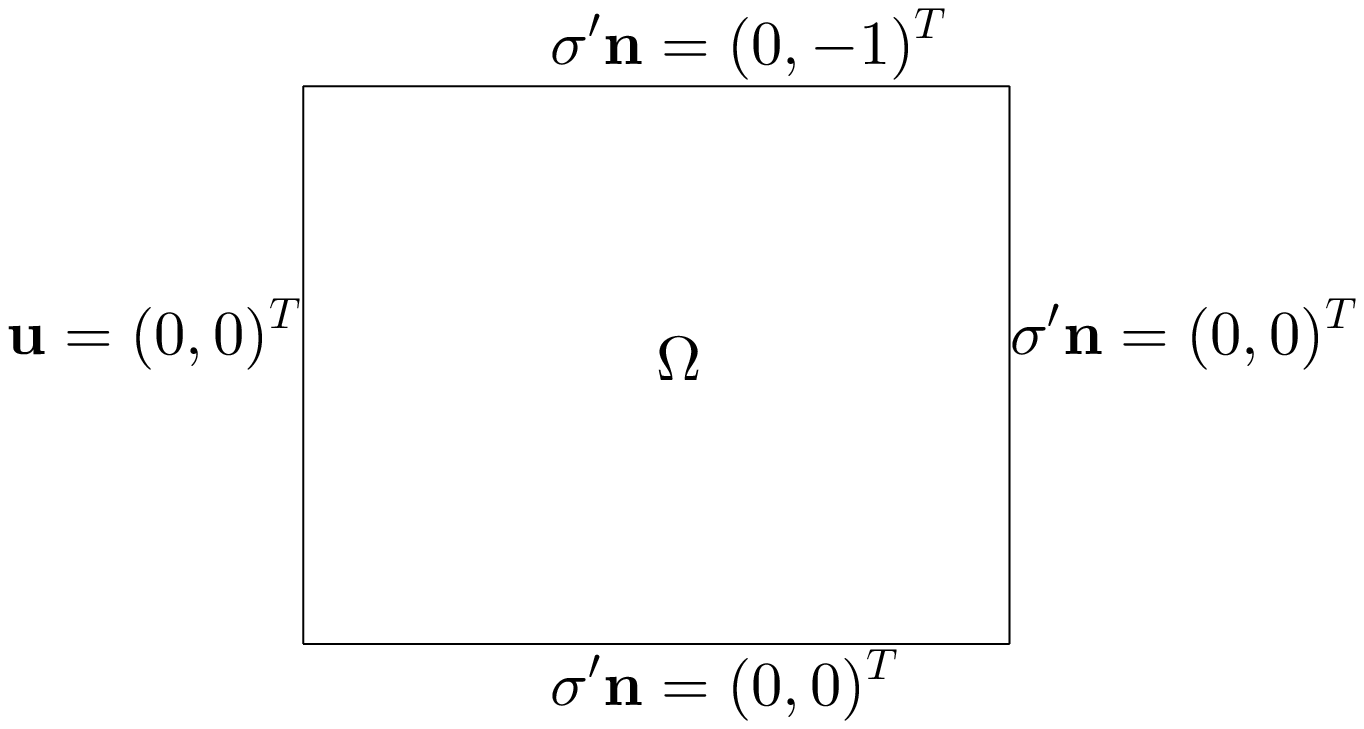}
  \caption{ Description of the cantilever bracket problem.}
  \label{Fig:12}
\end{figure}
For the material properties, the Lam\'{e} coefficients are computed in terms of the Young modulus, $E$, and the Poisson ratio $\nu$:
$\lambda=\frac{E \nu}{(1-2 \nu)(1+\nu)}$, and $\mu=\frac{E}{1+2 \nu}$ with $\nu=0.45$, $E=10^{5}$, and $K=10^{-7},\ \alpha= 0.93,\ M=10^{10}$.
Setting $ \tau =0.001$, we show the approximation for the pressure field obtained without stabilization terms in Fig. \ref{Fig:pressure-CBproblem} (left),
and observe nonphysical oscillations. We clearly observe that the stability scheme (\ref{eliminated}) removes the nonphysical pressure oscillations, see
Fig. \ref{Fig:pressure-CBproblem} (right).
\begin{figure}[!htb]
    \centering
   \subfloat[]{\label{Fig:Poscillation}
    \includegraphics[width=2in]{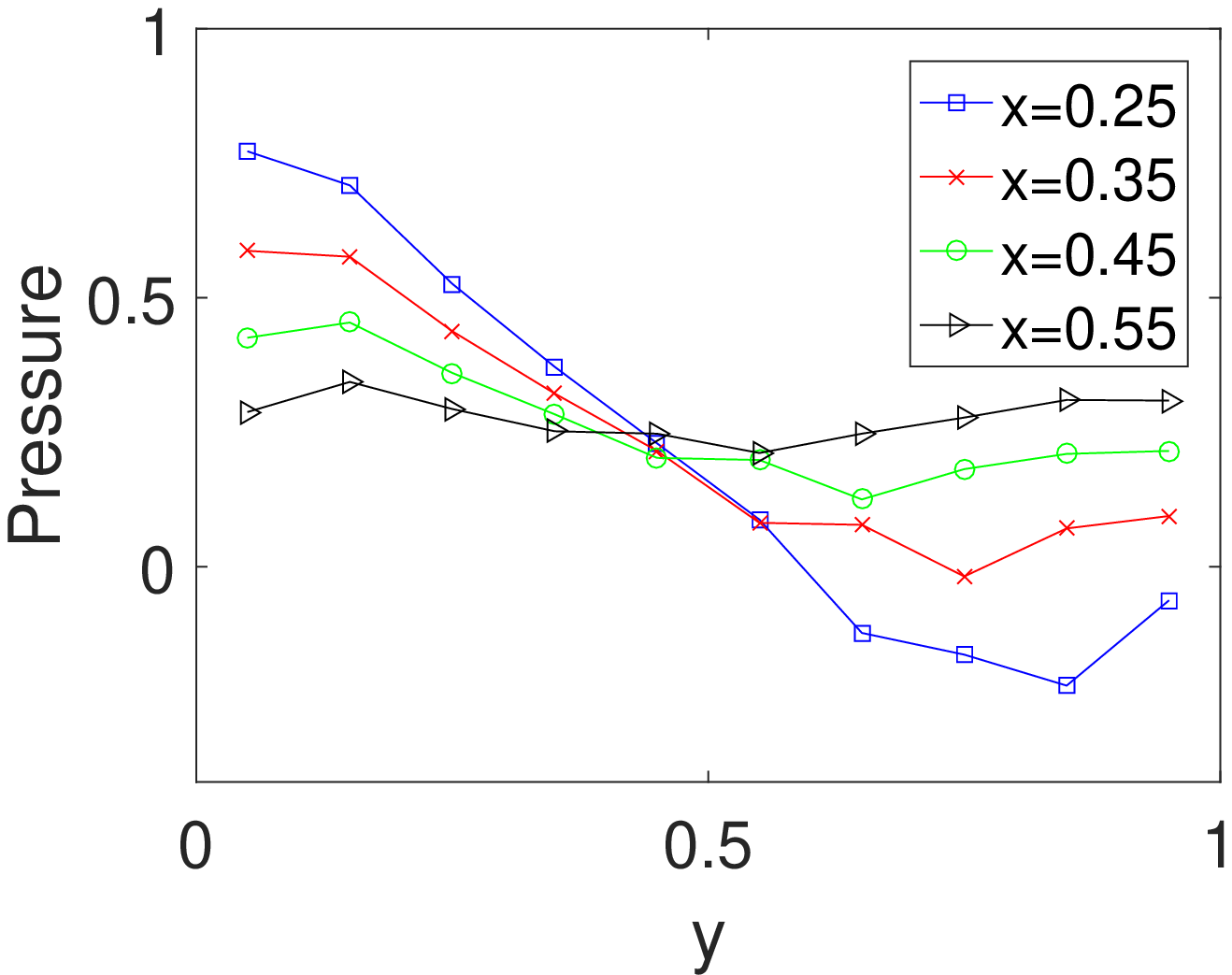}}
    \quad
    \subfloat[]{\label{Fig:Psmooth}
    \includegraphics[width=2in]{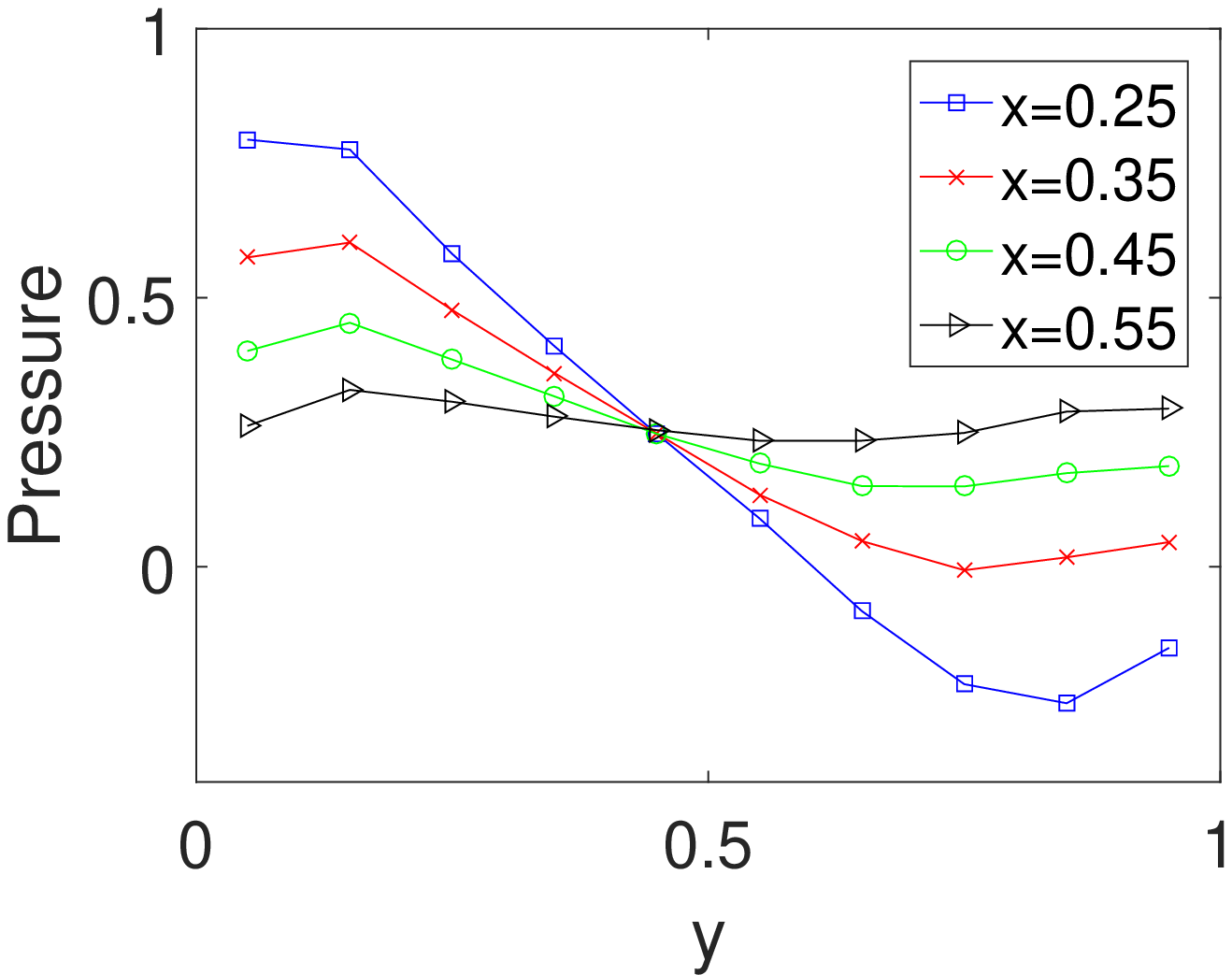}}\\
    \caption{Pressure solution of the cantilever bracket problem using hybrid P1-RT0-P0 (left) and eliminated system (\ref{eliminated}) (right) finite element method at $t_{max}$ = 0.005 (final time).}\label{Fig:pressure-CBproblem}
\end{figure}

\subsection{Robustness of the Preconditioners}
 In this section, we demonstrate the robustness of the block preconditioners. We use flexible GMRES to solve the eliminated system $\mathcal{A}^{E}$ (\ref{eliminated}). For each test we use flexible GMRES to solve the linear system $\mathcal{A}^{E}$ and a stopping tolerance of $10^{-8}$ for the relative residual is used.  Different numerical tests are presented to show the robustness of the proposed preconditioners with respect to the physical parameters (mainly the permeability, $\kappa$ and the poisson ratio $\nu$) and the discretization parameters (mesh size $h$ and time step size $\tau$).  In the exact block preconditioners $\mathcal{B}_{D}^{E}$, $\mathcal{B}_{L}^{E}$, and $\mathcal{B}_{U}^{E}$, the diagonal blocks are inverted by the UMFPACK library~\cite{T. A. Davis1,T. A. Davis2,T. A. Davis3,T. A. Davis04}. For the inexact block preconditioners $\widehat{\mathcal{B}_D^E}$, $\widehat{\mathcal{B}_L^E}$, and $\widehat{\mathcal{B}_U^E}$,  $S_{\bm{u}}^{E}$ and $S_{p,\beta}^{E}$ are implemented using conjugate gradient (CG) method preconditioned with a V-cycle unsmoothed aggregation AMG (UA-AMG) and solved to a tolerance that the relative residual is less than $10^{-3}$.  We use zero right hand side and random initial guess in all the tests.  Each test is repeated $5$ times and the average number of iterations are reported.

We consider the first example presented in Section~\ref{sec:numerical1}. Choosing the physical parameters as $M=10^{6}$, $\alpha=1$, $E=1$, Table~\ref{tab:prec-elim-2d-nu-K1} shows iteration counts for the block preconditioners for the eliminated system when the physical values of $\nu$ and $K$ are varying. The mesh size is fixed as $h=\frac{1}{64}$, and the time step size is $\tau=1$. When $K$ is varying, the number of iterations grows slightly at the beginning but stabilizes when $K$ approached $10^{-12}$ for block diagonal preconditioners $\mathcal{B}_D^E$ and $\widehat{\mathcal{B}_D^E}$.  Block triangular preconditioners perform better than the block diagonal preconditioners as expected and are stable with respect to $K$. Overall, from the relatively consistent iteration counts, we can conclude that the block preconditioners are indeed robust with respect to the physical parameters. Table~\ref{tab:prec-elim-2d-h-tau1} shows the number of iterations for the block preconditioners for solving $\mathcal{A}^{E}$ for different mesh sizes and time step sizes with fixed $ \lambda=2$, $\mu=1$, and $K=10^{-6}$. Again, we observe robustness with respect to the discretization parameters. The block upper and lower triangular preconditioners contain more coupling information than the block diagonal preconditioners, and, as a result, we can see that they perform better than the block diagonal preconditioners. In addition, from both tables, we can see that the number of iterations of the inexact block preconditioners is slightly higher than that of the exact block preconditioners. This is expected and the difference is by no means significant.  Therefore, we would suggest using the inexact version in practice to avoid expensive direct solves used in the exact version.

\begin{table}[htp]
\footnotesize
\begin{center}
\caption{Iteration counts of the block preconditioners for the eliminated system $\mathcal{A}^{E}$ discretized from the first example in Section \ref{sec:numerical1}  with varying physical parameters $K$ and $\nu$.} \label{tab:prec-elim-2d-nu-K1}
\begin{tabular}{c|c c c c c c}
  \hline
  & \multicolumn{6}{ c }{$\nu = 0$ and varying $K$ 
  } \\ \hline
 & $10^{-2}$ & $10^{-4}$ & $10^{-6}$ & $10^{-8}$ & $10^{-10}$& $10^{-12}$  \\ \hline
	$\mathcal{B}_D^E$           & 21  & 28 & 38  & 40  & 40 & 38 \\
	$\mathcal{B}_U^E$           & 12  & 13 & 14  & 15  & 15 & 15  \\
	$\mathcal{B}_L^E$           & 13  & 14 & 14  & 15  & 15 & 15  \\
	$\widehat{\mathcal{B}_D^E}$ & 29  & 38 & 44  & 46  & 44 & 43 \\
	$\widehat{\mathcal{B}_U^E}$ & 16  & 18 & 23  & 22  & 23 & 21 \\
	$\widehat{\mathcal{B}_L^E}$ & 20  & 22 & 21  & 22  & 20 & 20 \\

  \hline
  & \multicolumn{6}{ c }{$K = 10^{-6}$ and varying $\nu$} \\ \hline
  & $0$ & $0.1$ & $0.2$ & $0.4$ & $0.45$ & $0.49$   \\ \hline
	$\mathcal{B}_D^E$           &38  & 38  & 38  & 36 & 33  & 29  \\
	$\mathcal{B}_U^E$           &14  & 14  & 14  & 13 & 11  & 8   \\
	$\mathcal{B}_L^E$           &14  & 14  & 14  & 13 & 11  & 8   \\
	$\widehat{\mathcal{B}_D^E}$ &44  & 44  & 44  & 45 & 44  & 40 \\
	$\widehat{\mathcal{B}_U^E}$ &23  & 21  & 20  & 19 & 15  & 12   \\
	$\widehat{\mathcal{B}_L^E}$ &21  & 21  & 20  & 16 & 15  & 12   \\

  \hline
\end{tabular}
\end{center}
\end{table}

\begin{table}[h!]
\footnotesize
\begin{center}
\caption{Iteration counts of the block preconditioners for the eliminated system $\mathcal{A}^{E}$ discretized from the first example in Section~\ref{sec:numerical1} with varying discretization parameters $h$ and $\tau$.}\label{tab:prec-elim-2d-h-tau1}
\begin{tabular}{l | c c c c c c c c c c}
	\hline
&\multicolumn{5}{ c }{$\mathcal{B}_D^E $}\\[0.3em] \hline
		 \backslashbox{$\tau$}{$h$} & $\frac{1}{4}$ & $ \frac{1}{8}$ & $\frac{1}{16}$  & $\frac{1}{32}$ & $\frac{1}{64}$  \\
		\hline
        $1$      &35 & 39 & 40 & 38 & 35 \\
		$0.1$    &32 & 39 & 40 & 40 & 39 \\
		$0.01$   &34 & 39 & 40 & 40 & 39 \\
		$0.001$  &33 & 37 & 38 & 38 & 38 \\
		$0.0001$ &33 & 38 & 38 & 38 & 38 \\
		\hline
		\end{tabular}
\begin{tabular}{c c c c c}
        \hline
\multicolumn{5}{ c }{$\widehat{\mathcal{B}_D^E}  $}\\ \hline
$\frac{1}{4}$ \hspace{-58pt}\phantom{\backslashbox{$\tau$}{Mesh}}  & $\frac{1}{8}$ & $\frac{1}{16}$  & $\frac{1}{32}$ & $\frac{1}{64}$ \\
		\hline
        39 & 44 & 46 & 47 & 46 \\
		39 & 44 & 45 & 45 & 45 \\
		38 & 44 & 45 & 46 & 43 \\
		38 & 45 & 45 & 44 & 43 \\
		39 & 44 & 45 & 45 & 43 \\
		\hline
		\end{tabular}
\begin{tabular}{l | c c c c c}
		\hline
&\multicolumn{5}{ c }{$\mathcal{B}_U^E$}\\[0.3em] \hline
		 \backslashbox{$\tau$}{$h$} & $\frac{1}{4}$ & $ \frac{1}{8}$ & $\frac{1}{16}$  & $\frac{1}{32}$ & $\frac{1}{64}$  \\
		\hline
        $1$      & 17 & 16 & 15 & 14 & 12 \\
		$0.1$    & 17 & 16 & 15 & 14 & 13 \\
		$0.01$   & 17 & 16 & 15 & 14 & 14 \\
		$0.001$  & 18 & 16 & 15 & 14 & 14 \\
		$0.0001$ & 17 & 16 & 15 & 14 & 14 \\
		\hline
		\end{tabular}
\begin{tabular}{c c c c c}
        \hline
\multicolumn{5}{ c }{$\widehat{\mathcal{B}_U^E}$}\\ \hline
$\frac{1}{4}$ \hspace{-58pt}\phantom{\backslashbox{$\tau$}{Mesh}}  & $\frac{1}{8}$ & $\frac{1}{16}$  & $\frac{1}{32}$ & $\frac{1}{64}$ \\
		\hline
        22 & 20 & 20 & 20 & 20 \\
		22 & 21 & 20 & 20 & 20 \\
		23 & 21 & 19 & 19 & 19 \\
		23 & 20 & 19 & 19 & 19 \\
		22 & 19 & 19 & 19 & 19 \\
		\hline
		\end{tabular}
\begin{tabular}{l | c c c c c}
		\hline
&\multicolumn{5}{ c }{$\mathcal{B}_L^E$}\\[0.3em] \hline
		 \backslashbox{$\tau$}{$h$} & $\frac{1}{4}$ & $ \frac{1}{8}$ & $\frac{1}{16}$  & $\frac{1}{32}$ & $\frac{1}{64}$  \\
		\hline
        $1$      & 16 & 15 & 15 & 14 & 12 \\
		$0.1$    & 16 & 15 & 15 & 14 & 13 \\
		$0.01$   & 16 & 16 & 15 & 14 & 13 \\
		$0.001$  & 15 & 15 & 15 & 14 & 13 \\
		$0.0001$ & 16 & 16 & 15 & 14 & 13 \\
		\hline
		\end{tabular}
\begin{tabular}{c c c c c}
        \hline
\multicolumn{5}{ c }{$\widehat{\mathcal{B}_L^E}$}\\ \hline
$\frac{1}{4}$ \hspace{-58pt}\phantom{\backslashbox{$\tau$}{Mesh}}  & $\frac{1}{8}$ & $\frac{1}{16}$  & $\frac{1}{32}$ & $\frac{1}{64}$ \\
		\hline
         20 & 21 & 19 & 19 & 16\\
         21 & 20 & 19 & 18 & 17\\
         20 & 20 & 19 & 18 & 17\\
         19 & 20 & 19 & 18 & 17\\
         20 & 20 & 19 & 18 & 18\\
		\hline
		\end{tabular}
\end{center}
\end{table}

Next, we illustrate that the block preconditioners are robust for the cantilever bracket problem as well. Table \ref{tab:prec-elim-2d-nu-K CB Problem} shows iteration counts for the block preconditioners when the values of $\nu$ and $K$ varies for the eliminated system (\ref{eliminated}). The mesh size is fixed as $h=1/64$ and the time step size is $\tau=1$. When $K$ is varying, the number of iterations of the diagonal block preconditioners grows as $K$ approaches $10^{-12}$ but stabilize towards the end. The exact block triangular preconditioners are stable with respect to $K$ and the number of iterations basically stays the same.  But the iteration counts of the inexact block triangular preconditioners increases as $K$ approached to $10^{-12}$.  This is due to the inexactness and, when we solve the diagonal block more accurately, the number iterations will stabilize.  When $\nu$ is varying, the number iterations decreases as $\nu$ gets closer to $0.5$ for all block preconditioners.  However, we want to point out that, when $\nu$ approaches $0.5$, the diagonal block corresponding to the elasticity part becomes more and more difficult to solve and, therefore, the inner preconditioned CG methods might need more iterations since we are using simple UA-AMG method.  Overall, the proposed block preconditioners are robust with respect to the physical parameters which confirms the theoretical results.

Table \ref{tab:prec-elim-2d-h-tau CB Problem} shows the iteration counts for the block preconditioners on the eliminated system for different mesh sizes and time step sizes with $\nu=0.45$ and $K=10^{-7}$. As shown in Table \ref{tab:prec-elim-2d-h-tau CB Problem}, We saw the same behavior as the previous experiments.  The block triangular preconditioners work better than the block diagonal preconditioners. The number of iterations of the inexact block preconditioners is higher than the exact block preconditioners.  Overall, the number of iterations of all the block preconditioners are stable when we varying $h$ and $\tau$, which demonstrates the robustness of the block preconditioners.  In this example, the exact block triangular preconditioners only need $2$-$4$ iterations to achieve tolerance.  This is due to the choice of the physical parameters, the block triangular preconditioned linear system has eigenvalues are clustered near $-1$ and $1$ and, therefore, the Krylov iterative method converges quickly in this case.
 
\begin{table}[htp]
\footnotesize
\begin{center}
\caption{Iteration counts for the block preconditioners for the eliminated system $\mathcal{A}^{E}$ discretized from the cantilever bracket problem with varying physical parameters $K$ and $\nu$.} \label{tab:prec-elim-2d-nu-K CB Problem}
\begin{tabular}{c|c c c c c c}
  \hline
  & \multicolumn{6}{ c }{$\nu = 0.45$ and varying $K$  } \\ \hline
  & $10^{-2}$  & $10^{-4}$ & $10^{-6}$ & $10^{-8}$ & $10^{-10}$ & $10^{-12}$  \\ \hline
	$\mathcal{B}_D^E$           &  4  & 4  & 5  & 14 & 21 & 25  \\
	$\mathcal{B}_U^E$           &  2  & 2  & 2  & 2  & 2  & 2  \\	
	$\mathcal{B}_L^E$           &  3  & 3  & 4  & 4  & 3  & 3  \\
	$\widehat{\mathcal{B}_D^E}$ &  5  & 6  & 10 & 29 & 36 & 38  \\
	$\widehat{\mathcal{B}_U^E}$ &  4  & 4  & 4  & 6  & 9  & 16  \\
	$\widehat{\mathcal{B}_L^E}$ &  5  & 5  & 6  & 8  & 8  & 11  \\
  \hline
  & \multicolumn{6}{ c }{$K = 10^{-7}$ and varying $\nu$ } \\ \hline
                          & $0$ & $0.1$ & $0.2$ & $0.4$  & $0.45$ & $0.49$  \\ \hline
  $\mathcal{B}_D^E$           & 13 & 13 & 13  & 10 & 9  &  6    \\
  $\mathcal{B}_U^E$           &  2 &  2 &  2  &  2 & 2  & 2     \\
  $\mathcal{B}_L^E$           &  5 &  5 &  5  &  4 & 4  & 3     \\
  $\widehat{\mathcal{B}_D^E}$ & 23 & 23 & 23  & 20 & 18 & 11     \\
  $\widehat{\mathcal{B}_U^E}$ &  7 &  7 &  7  &  5 & 5  & 5     \\
  $\widehat{\mathcal{B}_L^E}$ &  9 &  9 &  9  &  7 & 7  & 7     \\
  \hline
\end{tabular}
\end{center}
\end{table}

\begin{table}[ht!]
\footnotesize
\begin{center}
\caption{Iteration counts for the block preconditioners for the eliminated system $\mathcal{A}^{E}$ discretized from the cantilever bracket problem with varying discretization parameters $h$ and $\tau$.}\label{tab:prec-elim-2d-h-tau CB Problem}
\begin{tabular}{l | c c c c c c c c c c}
		\hline
&\multicolumn{5}{ c }{$\mathcal{B}_D^E$}\\[0.3em] \hline
		 \backslashbox{$\tau$}{$h$} & $\frac{1}{4}$ & $ \frac{1}{8}$ & $\frac{1}{16}$  & $\frac{1}{32}$ & $\frac{1}{64}$  \\
		\hline
		$1$       & 15 & 15 & 12 & 10 &  8 \\
        $0.1$     & 22 & 20 & 18 & 16 & 14 \\
		$0.01$    & 25 & 23 & 21 & 20 & 18 \\
		$0.001$   & 25 & 25 & 24 & 22 & 21 \\
		$0.0001$  & 26 & 26 & 26 & 25 & 23 \\
		\hline
		\end{tabular}
\begin{tabular}{c c c c c}
        \hline
\multicolumn{5}{ c }{$\widehat{\mathcal{B}_D^E}$}\\ \hline
$\frac{1}{4}$ \hspace{-58pt}\phantom{\backslashbox{$\tau$}{Mesh}}  & $\frac{1}{8}$ & $\frac{1}{16}$  & $\frac{1}{32}$ & $\frac{1}{64}$ \\
		\hline
		 22 & 23 & 21 & 20 & 17 \\
		 29 & 33 & 36 & 33 & 30 \\
         30 & 33 & 35 & 36 & 37 \\
         31 & 38 & 39 & 34 & 36 \\
         31 & 38 & 39 & 38 & 36 \\
		\hline
		\end{tabular}
\begin{tabular}{l | c c c c c}
		\hline
&\multicolumn{5}{ c }{$\mathcal{B}_U^E $}\\[0.3em] \hline
		 \backslashbox{$\tau$}{$h$} & $\frac{1}{4}$ & $ \frac{1}{8}$ & $\frac{1}{16}$  & $\frac{1}{32}$ & $\frac{1}{64}$  \\
		\hline
       	$1$     & 2 & 2 & 2 & 2 &  2 \\
		$0.1$   & 3 & 2 & 2 & 2 &  2 \\
		$0.01$  & 3 & 2 & 2 & 2 &  2 \\
		$0.001$ & 3 & 2 & 2 & 2 &  2 \\
		$0.0001$& 3 & 2 & 2 & 2 &  2 \\
		\hline
		\end{tabular}
\begin{tabular}{c c c c c}
        \hline
\multicolumn{5}{ c }{$\widehat{\mathcal{B}_U^E}$}\\ \hline
$\frac{1}{4}$ \hspace{-58pt}\phantom{\backslashbox{$\tau$}{Mesh}}  & $\frac{1}{8}$ & $\frac{1}{16}$  & $\frac{1}{32}$ & $\frac{1}{64}$ \\
		\hline
         7 &  6 &  5 &  5 &  5  \\
        10 &  8 &  7 &  6 &  7 \\
        11 & 11 &  9 &  8 &  7 \\
        13 & 12 & 15 & 12 &  9 \\
        14 & 13 & 14 & 15 & 13 \\
		\hline
		\end{tabular}
\begin{tabular}{l | c c c c c}
		\hline
&\multicolumn{5}{ c }{$\mathcal{B}_L^E$}\\[0.3em] \hline
		 \backslashbox{$\tau$}{$h$} & $\frac{1}{4}$ & $ \frac{1}{8}$ & $\frac{1}{16}$  & $\frac{1}{32}$ & $\frac{1}{64}$  \\
		\hline
		$1$      & 4 & 4 & 4 & 4 & 4 \\
		$0.1$    & 4 & 4 & 4 & 4 & 4 \\
		$0.01$   & 4 & 4 & 4 & 3 & 3 \\
		$0.001$  & 4 & 4 & 4 & 3 & 3 \\
		$0.0001$ & 3 & 3 & 3 & 3 & 3 \\
		\hline
		\end{tabular}
\begin{tabular}{c c c c c}
        \hline
\multicolumn{5}{ c }{$\widehat{\mathcal{B}_L^E} $}\\ \hline
$\frac{1}{4}$ \hspace{-58pt}\phantom{\backslashbox{$\tau$}{Mesh}}  & $\frac{1}{8}$ & $\frac{1}{16}$  & $\frac{1}{32}$ & $\frac{1}{64}$ \\
		\hline
         7 &  7 &  7 &  6 &  7 \\
		 9 &  8 &  8 &  7 &  8 \\
		11 & 10 &  9 &  8 &  8 \\
		11 & 12 & 11 & 10 &  8 \\
		10 & 11 & 11 & 10 & 11 \\
		\hline
		\end{tabular}
\end{center}
\end{table}

\section{Conclusion}
In this work, we considered a stabilized hybrid mixed method for the three-field Biot's model. The hybrid mixed finite element method based on the triple
P1-RT0-P0 presented in~\cite{C. Niu} is not uniformly stable with respect to the physical parameters, for instance, when the permeability is small with respect to the mesh
 size, it does not converge. To overcome such problem, we presented a stabilization technique with bubble functions following~\cite{V. Girault,C. Rodrigo}.
  The well-posedness of the stabilized scheme with respect to the energy norm is given independent of the physical and discretization parameters. We then use a spectrally equivalent perturbed bilinear form as suggested in~\cite{C. Rodrigo}. The perturbation of the bilinear form allows for elimination
of the bubble functions; The normal component of Darcy's velocity is discontinuous across the interior edges, so the mass matrix
corresponding to Darcy's velocity is block diagonal. Therefore, the unknowns of the bubble functions and Darcy's velocity can be eliminated by static condensation.
 In fact, the eliminated system is the same size as the P1-RT0-P0 discretization, which is widely used in practice.
 We also prove that the eliminated system is well-posed and efficient preconditioners are developed to solve the resulting linear system. We show theoretically that the block preconditioners are robust with respect to physical and discretization parameters. Finally, numerical experiments validate the accuracy and efficiency of the stabilization method and also demonstrate the robustness of the block preconditioners.

\section*{Appendix. Proof of Theorem \ref{thm:error}}

In this appendix, we provide the proof of Theorem \ref{thm:error}.
\begin{proof}
  First of all, we introduce the notation $ \beta \in L^{2}(\partial \Omega)$ for pressure on the boundary of domain $\Omega$. Here, $L^{2}(\partial \Omega)$ denotes the set of square integrable functions on the boundary of $\Omega$. Then, for the model problem (\ref{e:MODEL}), integrating by parts, we have,
\begin{equation}\label{e:error eqn}
\begin{array}{l}
a(\bm{u},\bm{v})-\alpha(p,\nabla \cdot \bm{v})=(\bm{f},\bm{v}),\ \ \forall \ \bm{v} \in \bm{H}^{1}_{0}(\Omega),\\
\displaystyle{\alpha(\nabla \cdot \partial _{t}\bm{u},q)+\frac{1}{M}(  \partial _{t}p,q) +(\nabla \cdot \bm{w},q) =-(g,q)}, \ \ \forall \ q \in L^{2}(\Omega),\\
 (\bm{w}\cdot \bm{n}_{e},\rho)_{\partial\mathcal{T}_{h}}=0, \ \ \forall \ \rho \in L^{2}(\partial \Omega),\\
-(p,\nabla \cdot \bm{r})-( \beta,\bm{r}\cdot \bm{n}_{e})_{\partial\mathcal{T}_{h}}+(\kappa^{-1}\bm{w},\bm{r})=0, \ \ \forall \ \bm{r} \in H(\text{div},\Omega).
\end{array}
\end{equation}
Choosing $\bm{v} = \bm{v}_{h}$, $q = q_{h}$, $\rho = \rho_{h}$ and $\bm{r} = \bm{r}_{h}$ in (\ref{e:error eqn}) and subtracting these equations from (\ref{e:stableHMFE}) and using the definition of elliptic projections given in (\ref{e:elliptic projections}), we obtain
 \begin{equation}\label{e:error eqn1}
\begin{array}{l}
 a(e_{\bm{u}}^{n},\bm{v}_{h})-\alpha(e_{p}^{n},\nabla \cdot \bm{v}_{h})=0,\\
\displaystyle{\alpha(\nabla \cdot \overline{\partial} _{t} e_{\bm{u}}^{n},q_{h})
+\frac{1}{M}(\overline{\partial}_{t}e_{p}^{n},q_{h})
+(\nabla \cdot e_{\bm{w}}^{n},q_{h})=(R^{n},q_{h})},\\
(e_{\bm{w}}^{n}\cdot \bm{n}_{e},\rho_{h})_{\partial\mathcal{T}_{h}}=0,\\
-(e_{p}^{n},\nabla \cdot\bm{r}_{h})-(e_{\beta}^{n},\bm{r}_{h}\cdot \bm{n}_{e})_{\partial\mathcal{T}_{h}}
+(\kappa^{-1}e_{\bm{w}}^{n},\bm{r}_{h})_{h}
=0.
\end{array}
\end{equation}
where $(R^{n},q_{h})=\displaystyle{\alpha(\nabla \cdot  R_{\bm{u}}^{n},q_{h})+\frac{1}{M}( R_{p}^{n},q_{h})}$, with
\begin{equation*}
\begin{array}{l}
 \displaystyle{R_{\bm{u}}^{n}:=\partial _{t}\bm{u}^{n}-\frac{\overline{\bm{u}}^{n}-\overline{\bm{u}}^{n-1}}{\tau}},\ \
  \displaystyle{R_{p}^{n}:=\partial _{t}p^{n}-\frac{\overline{p}^{n}-\overline{p}^{n-1}}{\tau}}.
\end{array}
\end{equation*}
Then, choosing $\bm{v}_{h}=\overline{\partial} _{t} e_{\bm{u}}^{n}$, $q_{h}=e_{p}^{n}$, $\rho_{h}=e_{\beta}^{n}$, and $\bm{r}_{h}=e_{\bm{w}}^{n}$, respectively, and adding all the equations of (\ref{e:error eqn1}), we have,
 \begin{equation*}
\begin{array}{l}
\displaystyle{\|e_{\bm{u}}^{n}\|_{a}^{2}
+\frac{1}{M}\|e_{p}^{n}\|^{2}+\tau\|e_{\bm{w}}^{n}\|_{h,\kappa^{-1}}^{2}}\\
 \leq \displaystyle{\|e_{\bm{u}}^{n}\|_{a} \|e_{\bm{u}}^{n-1}\|_{a}+\frac{1}{M}\|e_{p}^{n}\|\|e_{p}^{n-1}\|+\tau \| R_{\bm{u}}^{n}\|_{1}\|e_{p}^{n}\|+\frac{\tau}{M}\| R_{p}^{n}\|\|e_{p}^{n}\|}.
\end{array}
\end{equation*}
Due to our choice of finite element spaces and the first equality in (\ref{e:error eqn1}), we have
\begin{equation*}
\begin{array}{l}
\displaystyle{\|e_{p}^{n}\|\leq c\sup_{0\neq\bm{v}_{h} \in \bm{V}_{h}}\frac{(e_{p}^{n},\nabla \cdot \bm{v}_{h} )}{\|\bm{v}_{h} \|_{a}}=
c\sup_{0\neq\bm{v}_{h} \in \bm{V}_{h}}\frac{a(e_{\bm{u}}^{n}, \bm{v}_{h} )}{\|\bm{v}_{h} \|_{a}}=
c\|e_{\bm{u}}^{n}\|_{a}}.
\end{array}
\end{equation*}
Therefore, we have
\begin{equation*}
\begin{array}{l}
\displaystyle{\|e_{\bm{u}}^{n}\|_{a}^{2}
+\frac{1}{M}\|e_{p}^{n}\|^{2}+\tau\|e_{\bm{w}}^{n}\|_{h,\kappa^{-1}}^{2}}\\
 \leq \displaystyle{\|e_{\bm{u}}^{n}\|_{a}\|e_{\bm{u}}^{n-1}\|_{a}+\frac{1}{M}\|e_{p}^{n}\|\|e_{p}^{n-1}\|
 +c\|e_{\bm{u}}^{n}\|_{a}\biggl(\tau \| R_{\bm{u}}^{n}\|_{1}+\frac{\tau}{M}\| R_{p}^{n}\|\biggr)}\\
\leq \displaystyle{\biggl(\|e_{\bm{u}}^{n}\|_{a}^{2}
+\frac{1}{M}\|e_{p}^{n}\|^{2}\biggr)^{1/2}\biggl[\biggl(\|e_{\bm{u}}^{n-1}\|_{a}^{2}
+\frac{1}{M}\|e_{p}^{n-1}\|^{2}\biggr)^{1/2}}\\
\hspace{4mm}+\displaystyle{c\biggl(\tau \| R_{\bm{u}}^{n}\|_{1}+\frac{\tau}{M}\| R_{p}^{n}\|\biggr)\biggr]}.
\end{array}
\end{equation*}
This implies,
\begin{equation*}
\begin{array}{l}
\displaystyle{\biggl(\|e_{\bm{u}}^{n}\|_{a}^{2}
+\frac{1}{M}\|e_{p}^{n}\|^{2}\biggr)^{1/2}}\\
\leq \displaystyle{\biggl(\|e_{\bm{u}}^{n-1}\|_{a}^{2}
+\frac{1}{M}\|e_{p}^{n-1}\|^{2}\biggr)^{1/2}+c\biggl(\tau \|R_{\bm{u}}^{n}\|_{1}+\frac{\tau}{M}\| R_{p}^{n}\|\biggr)},\\
\end{array}
\end{equation*}
and
\begin{equation*}
\begin{array}{l}
\displaystyle{\tau^{1/2}\|e_{\bm{w}}^{n}\|_{h,\kappa^{-1}}\leq \biggl(\|e_{\bm{u}}^{n-1}\|_{a}^{2}
+\frac{1}{M}\|e_{p}^{n-1}\|^{2}\biggr)^{1/2}+c\biggl(\tau \|R_{\bm{u}}^{n}\|_{1}+\frac{\tau}{M}\| R_{p}^{n}\|\biggr)}.\\
\end{array}
\end{equation*}
By recursion, we get,
\begin{equation}\label{e:28}
\begin{array}{l}
\displaystyle{\biggl(\|e_{\bm{u}}^{n}\|_{a}^{2}
+\frac{1}{M}\|e_{p}^{n}\|^{2}\biggr)^{1/2}}\\
\leq \displaystyle{\biggl(\|e_{\bm{u}}^{0}\|_{a}+\frac{1}{M}\|e_{p}^{0}\|\biggr)^{1/2}+c\biggl(\tau \sum_{n=1}^{N} \|R_{\bm{u}}^{n}\|_{1}+\frac{\tau}{M}\sum_{n=1}^{N}\| R_{p}^{n}\|\biggr)},
\end{array}
\end{equation}
and
\begin{equation}\label{e:288}
\begin{array}{l}
\displaystyle{\tau^{1/2}\|e_{\bm{w}}^{n}\|_{h,\kappa^{-1}}}\\
\leq \displaystyle{\biggl(\|e_{\bm{u}}^{0}\|_{a}^{2}
+\frac{1}{M}\|e_{p}^{0}\|^{2}\biggr)^{1/2}+c\biggl(\tau \sum_{n=1}^{N} \|R_{\bm{u}}^{n}\|_{1}+\frac{\tau}{M}\sum_{n=1}^{N}\| R_{p}^{n}\|\biggr)}.\\
\end{array}
\end{equation}
Combining (\ref{e:28}) and (\ref{e:288}), we have the estimate
\begin{equation}\label{e:eupw}
\begin{array}{l}
\|(e_{\bm{u}}^{n},e_{p}^{n},e_{\bm{w}}^{n})\|_{\tau,h}\\
 \leq \displaystyle{c\biggl(\|e_{\bm{u}}^{0}\|_{a^{D}}+\frac{1}{M}\|e_{p}^{0}\|+\tau \sum_{n=1}^{N} \| R_{\bm{u}}^{n}\|_{1}+\frac{\tau}{M}\sum_{n=1}^{N}\| R_{p}^{n}\|\biggr)}.
\end{array}
\end{equation}
Now, following the same procedures of Lemma 8 in~\cite{C. Rodrigo2}, we have
\begin{equation}\label{Ru}
\begin{array}{l}
\displaystyle{\sum_{n=1} ^{N}\|R_{\bm{u}}^{n}\|_{1}\leq c\biggl(\int_{0}^{t_{N}}\|\partial_{tt}\bm{u}\|_{1}dt+\frac{1}{\tau}\int_{0}^{t_{N}}\|\partial_{t}\rho_{\bm{u}}\|_{1}dt\biggr)},
\end{array}
\end{equation}
 and similarly,
\begin{equation}\label{Rp}
\begin{array}{l}
\displaystyle{\sum_{n=1} ^{N}\|R_{p}^{n}\|\leq c\biggl(\int_{0}^{t_{N}}\|\partial_{tt}p\|dt+\frac{1}{\tau}\int_{0}^{t_{N}}\|\partial_{t}\rho_{p}\|dt\biggr)}.
\end{array}
\end{equation}
Then, the error estimate (\ref{e:error}) follows from (\ref{e:eupw}), (\ref{Ru}), (\ref{Rp}), (\ref{e:projection property}) and triangle inequality.
\end{proof}



\begin{thebibliography}{1}


\bibitem{C. Niu} C. Niu, H. Rui, M. Sun, A coupling of hybrid mixed and continuous Galerkin finite element methods for poroelasticity, Applied Mathematics and Computation, 347, 767-784 (2019).
\bibitem{V. Girault} V. Girault, P. A. Raviart, Finite element methods for Navier-Stokes equations: theory and algorithms, Springer Series in Computational Mathematics, Springer-Verlag, Berlin (1986).
\bibitem{K. Terzaghi}K. Terzaghi, Theoretical soil mechanics, Wiley: New York (1943).
\bibitem{M.A. Biot41} M. A. Biot, General theory of three-dimensional consolidation, Journal of Applied Physics, 12 (2), 155-164 (1941).
\bibitem{M.A. Biot55} M. A. Biot, Theory of elasticity and consolidation for a porous anisotropic solid, Journal of Applied Physics, 26 (2), 182-185 (1955).
\bibitem{M.A. Biot3} M. A. Biot, Theory of deformation of a porous viscoelastic anisotropic solid, Journal of Applied Physics, 27 (5), 459-467 (1956).
\bibitem{R. Showalter} R. E. Showalter, Diffusion in poro-elastic media, Journal of Mathematical Analysis and Applications, 251 (1), 310-340 (2000).
\bibitem{A. Zenisek} A. \v{Z}en\'{\i}\v{s}ek, The existence and uniqueness theorem in Biot's consolidation theory, Aplikace Matematiky, 29 (3), 194-211 (1984).
\bibitem{R.Z. Dautov} R. Z. Dautov, M. I. Drobotenko, A. D. Lyashko, Investigation of the well-posedness of the generalized solution of the filtration consolidation problem, Differential Equations, 33 (4), 518-525 (1997).
\bibitem{R.W. Lewis1} R. W. Lewis, B. A. Schrefler, The finite element method in the static and dynamic deformation and consolidation of porous media, Wiley: New York (1998).
\bibitem{R.W. Lewis2} R. W. Lewis, B. A. Schrefler, The finite element method in the deformation and consolidation of porous media, Wiley
and Sons, Chichester (1987).
\bibitem{R. Liu} R. Liu, Discontinuous Galerkin finite element solution for poromechnics, Ph.D. thesis, University of Texas at Austin (2004).
\bibitem{X. Hu1} X. Hu, L. Mu, X. Ye, Weak Galerkin method for the Biot's consolidation model, Computers and Mathematics with Applications, 75 (6), 2017-2030 (2018).
\bibitem{Y. Chen} Y. Chen, G. Chen, X. Xie, Weak Galerkin finite element method for Biot's consolidation problem, Journal of Computational and Applied Mathematics, 330, 398-416 (2018).
\bibitem{P.J. Phillips1} P. J. Phillips, M. F. Wheeler, A coupling of mixed and continuous Galerkin finite element methods for poroelasticity I: the continuous in time case, Computational Geosciences, 11 (2), 131-144 (2007).
\bibitem{P.J. Phillips2} P. J. Phillips, M. F. Wheeler, A coupling of mixed and continuous Galerkin fnite element methods for poroelasticity II: the discrete-in-time case, Computational Geosciences, 11 (2), 145-158 (2007).
\bibitem{P.J. Phillips3} P. J. Phillips, M. F. Wheeler, A coupling of mixed and discontinuous Galerkin finite-element methods for poroelasticity, Computational Geosciences, 12 (4), 417-435 (2008).
\bibitem{S.Y. Yi1} S. Y. Yi, A coupling of nonconforming and mixed finite element methods for Biot's consolidation model, Numerical Methods for Partial Differential Equations, 29 (5), 1749-1777 (2013).
\bibitem{X. Hu2} X. Hu, C. Rodrigo, F. J. Gaspar, L. T. Zikatanov, A nonconforming finite element method for the Biot's consolidation model in poroelasticity, Journal of Computational and Applied Mathematics, 310, 143-154 (2017).
\bibitem{P.A. Raviart} P. A. Raviart, J. M. Thomas, A mixed finite element method for 2-nd order elliptic problems, Mathematical Aspects of Finite Element Methods, Springer, Berlin, Heidelberg (1977).
\bibitem{J.C. Nedelec} J. C. N\'{e}d\'{e}lec, Mixed finite elements in $R^{3}$, Numerische Mathematik, 35 (3), 315-341 (1980).
\bibitem{M. Sun} M. Sun, H. Rui, A coupling of weak Galerkin and mixed finite element methods for poroelasticity, Computers and Mathematics with Applications, 73, 804-823 (2017).
\bibitem{Korsawe} J. Korsawe, G. Starke, A least-squares mixed finite element method for Biot's consolidation problem in porous media, SIAM Journal on Numerical Analysis, 43 (1), 318-339 (2005).
\bibitem{Tchonkova} M. Tchonkova, J. Peters, S. Sture, A new mixed finite element method for poro-elasticity, International Journal for Numerical and Analytical Methods in Geomechanics, 32 (6), 579-606 (2008).
\bibitem{S.Y. Yi14} S. Y. Yi, Convergence analysis of a new mixed finite element method for Biot's consolidation model, Numerical Methods for Partial Differential Equations, 30(4), 1189-1210 (2014).
\bibitem{S.Y. Yi17} S. Y. Yi, M. L. Bean, Iteratively coupled solution strategies for a four-field mixed finite element method for poroelasticity, International Journal for Numerical and Analytical Methods in Geomechanics, 41 (2), 159-179 (2017).

\bibitem{Herminio T.Honorio18} Herm\'{\i}nio T.Hon\'{o}rio, Clovis R.Maliska, Massimiliano Ferronato, Carlo Janna, A stabilized element-based finite volume method for poroelastic problems, Journal of Computational Physics, 364, 49-72 (2018).
\bibitem{I. Sokolova19} I. Sokolova, M. G. Bastisya, H. Hajibeygi, Multiscale finite volume method for finite-volume-based simulation of poroelasticity, Journal of Computational Physics, 379, 309-324 (2019).

\bibitem{L. Berger15} L. Berger, R. Bordas, D. Kay, S. Tavener, Stabilized lowest-order finite element approximation for linear three-field poroelasticity, SIAM J. SCI. COMPUT., 37, A2222-A2245 (2015).
\bibitem{C. Rodrigo} C. Rodrigo, X. Hu, P. Ohm, J. H. Adler, F. J. Gaspar, L. T. Zikatanov, New stabilized discretizations for poroelasticity and the Stokes' equations, Computer Methods in Applied Mechanics and Engineering, 341, 467-484 (2018).
\bibitem{HongKraus2018} Q. Hong, J. Karus, Parameter-robust stability of classical three-field formulation of Biot's consolidation model, Electronic Transactions on Numerical Analysis, 48, 202-226 (2018).
\bibitem{M. Frigo20} M. Frigo, N. Castelletto, M. Ferronato, J. A. White, Efficient solvers for hybridized three-field mixed finite element coupled poromechanics, arXiv:2002.09603v1.
\bibitem{F. Brezzi} D. Boffi, F. Brezzi, M. Fortin, Mixed finite element methods and applications, Heidelberg: Springer (2013).
\bibitem{D.N. Arnold} D. N. Arnold, F. Brezzi, Mixed and nonconforming finite element methods: implementation, postprocessing and error estimates,
     ESAIM: Mathematical Modelling and Numerical Analysis-Mod\'{e}lisation Math\'{e}matique et Analyse Num\'{e}rique, 19 (1), 7-32 (1985).
\bibitem{H. Egger}H. Egger, J. Sch\"{o}berl, A hybrid mixed discontinuous Galerkin finite-element method for convection-diffusion problems, IMA Journal of Numerical Analysis, 30 (4), 1206-1234 (2010).
\bibitem{T.H.H. Pian} T. H. H. Pian, P. Tong, Basis of finite element methods for solid continua, International Journal for Numerical Methods in Engineering, 1 (1), 3-28 (1969).


\bibitem{D. Loghin} D. Loghin, A. J. Wathen, Analysis of preconditioners for saddle-point problems, SIAM, Journal on Scientific Computing, 25 (6), 2029-2049 (2004).
\bibitem{K.A. Mardal} K. A. Mardal, R. Winther, Preconditioning discretizations of systems of partial differential equations, Numerical Linear Algebra with Applications, 18 (1), 1-40 (2011).
\bibitem{James} J. H. Adler, F. J. Gaspar, X. Hu, P. Ohm, C. Rodrigo, L. T. Zikatanov, Robust preconditioners for a new stabilized discretization of the poroelastic equations. To Appear in SIAM Journal on Scientific Computing, 2020.
\bibitem{K. Lipnikov} K. Lipnikov, Numerical methods for the Biot model in poroelasticity, Ph.D. thesis, University of Houston (2002).

\bibitem{J. Schoberl} J. Sch\"{o}berl, Numerical methods for Maxwell equations.
\bibitem{R. Hiptmair} R. Hiptmair, Finite elements in computational electromagnetism, Acta Numerica, 11, 237-339 (2002).
\bibitem{V. Thomee} V. Thom\'{e}e, Galerkin finite element methods for parabolic problems, Springer Series in Computational Mathematics, Springer-Verlag, Berlin (2006).
\bibitem{C. Rodrigo2} C. Rodrigo, F. J. Gaspar, X. Hu, L. T. Zikatanov, Stability and monotonicity for some discretizations of the Biot's consolidation model, Computer Methods in Applied Mechanics and Engineering, 298, 183-204 (2016).

\bibitem{G. Starke} G. Starke, Field-of-values analysis of preconditioned iterative methods for nonsymmetric elliptic problems, Numerische Mathematik, 78(1), 103-117 (1997).
\bibitem{A. Klawonn} A. Klawonn, G. Starke, Block triangular preconditioners for nonsymmetric saddle point problems: field-of-values analysis, Numerische Mathematik, 81 (4), 577-594 (1999).
\bibitem{Y. Ma} Y. Ma, K. Hu, X. Hu, J. Xu, Robust preconditioners for incompressible MHD models, Journal of Computational Physics, 316, 721-746 (2016).
\bibitem{P. J. Phillips4} P. J. Phillips, M. F. Wheeler, Overcoming the problem of locking in linear elasticity and poroelasticity: an heuristic approach, Computational Geosciences, 13 (1), 5-12 (2009).
\bibitem{T. A. Davis1} T. A. Davis, I. S. Duff, An unsymmetric-pattern multifrontal method for sparse LU factorization, SIAM Journal on Matrix Analysis and Applications, 18 (1), 140-158 (1997).
\bibitem{T. A. Davis2} T. A. Davis, I. S. Duff, A combined unifrontal/multifrontal method for unsymmetric sparse matrices, ACM Transactions on Mathematical Software (TOMS), 25 (1), 1-20 (1999).
\bibitem{T. A. Davis3} T. A. Davis, A column pre-ordering strategy for the unsymmetric-pattern multifrontal method, ACM Transactions on Mathematical Software (TOMS), 30 (2), 165-195 (2004).
\bibitem{T. A. Davis04} T. A. Davis, Algorithm, UMFPACK, an unsymmetric-pattern multifrontal method, ACM Transactions on Mathematical Software (TOMS), 30 (2), 196-199 (2004).
\end{thebibliography}


\end{document}